\newcommand{\norm}[2]{\left\|#1\right\|_{#2}}
\newcommand{\scalar}[2]{\langle{ #1},{#2} \rangle}
\newcommand{\set}[1]{\left\{#1\right\}}
\newcommand{\abs}[1]{\left\lvert #1 \right\rvert}
\newcommand{\lr}[1]{\left(#1\right)}
\newcommand{\nat}{\mathbb N}
\newcommand{\af}{H_\varphi}
\newcommand{\lpsi}{\Lambda^g_\psi}
\newcommand{\lmg}{\Lambda^{g}}
\newcommand{\lmf}{\Lambda^{f}}
\newcommand{\Ck}{C_{k}}
\newcommand{\yn}{Y^{n}}
\newcommand{\opA}{A}
\newcommand{\opT}{T}
\newcommand{\opG}{G}
\newcommand{\opK}{K}
\newcommand{\spZ}{Z}
\newcommand{\asta}{A^\ast A}
\newcommand{\eS}[1]{\mathcal S^{#1}}
\newcommand{\spc}{\operatorname{SPC}}
\newcommand{\E}{\mathbb E}
\newcommand{\veps}{\varepsilon}
\newcommand{\tr}{\operatorname{tr}}
\newcommand{\omf}{\omega_{f_0}}
\newcommand{\myspan}{\operatorname{span}}
\newcommand{\myrank}{\operatorname{rank}}
\newcommand{\ra}{r_{\alpha}}
\newcommand{\tast}{t_{\delta}}
\newcommand{\nast}{{k_{\delta}}}
\newcommand{\kn}{k_{n}}
\newcommand{\Sn}[1]{S\lr{#1,\rho}}
\newcommand{\bigo}{\mathcal O}
\newcommand{\X}{\mathcal{X}}
\newcommand{\UU}{U}
\newcommand{\Sk}{\mathcal{X}_{k}}
\newcommand{\Skd}{\mathcal{X}_{\nast}}
\newcommand{\Pg}{\Pi_{k}^{Y}}
\newcommand{\Pf}{\Pi_{k}^{X}}
\newcommand{\ktn}{k(n)}
\newcommand{\Pftn}{\Pi_{\ktn}^{X}}
\newcommand{\Xk}{X_{k}}
\newcommand{\Xkn}{X_{\kn}}
\theoremstyle{plain}
\newtheorem{thm}{Theorem}
\newtheorem{lem}{Lemma}[section]
\newtheorem{prop}{Proposition}[section]
\newtheorem{cor}{Corollary}[section]
\theoremstyle{definition}
\newtheorem{ass}{Assumption}
\newtheorem{de}{Definition}
\theoremstyle{remark}
\newtheorem{rem}{Remark}
\newtheorem{xmpl}{Example}
\newtheorem*{xmplno}{Example}
\newcommand{\ca}{C_1} 
\newcommand{\cb}{c} 
\newcommand{\cc}{C_2} 
\newcommand{\cd}{C_3} 
\newcommand{\ce}{C_6} 
\newcommand{\cff}{C_7} 
\newcommand{\cg}{C_4} 
\newcommand{\ch}{C_5} 
\newcommand{\ci}{C_8} 
\begin{document}

\begin{frontmatter}

\title{Designing truncated priors for direct and inverse Bayesian problems}

\runtitle{Truncated priors for Bayesian problems}

\begin{aug}
\corref{Sergios Agapiou}

\author{\fnms{Sergios}
  \snm{Agapiou}\corref{}\thanksref{a}\ead[label=e1,mark]{agapiou.sergios@ucy.ac.cy}}
\and
\author{\fnms{Peter} \snm{Math{\'e}}\thanksref{b}\ead[label=e2]{peter.mathe@wias-berlin.de}}
\address[a]{Department of Mathematics and Statistics, University of Cyprus, \href{mailto:agapiou.sergios@ucy.ac.cy}{agapiou.sergios@ucy.ac.cy}}
\address[b]{Weierstrass Institute for Applied Analysis and Stochastics, Berlin, \href{mailto:peter.mathe@wias-berlin.de}{peter.mathe@wias-berlin.de}}

\runauthor{S. Agapiou and P. Math\'e}
\affiliation{University of Cyprus and Weierstrass Institute}

\end{aug}
 \begin{abstract}
The Bayesian approach to inverse problems with functional unknowns,
has received significant attention in recent years. An important component of
the developing theory is the study of the asymptotic performance of
the posterior distribution in the frequentist setting. 
The present paper contributes to the area of Bayesian inverse problems by formulating a posterior contraction theory for linear inverse problems, with truncated Gaussian series priors, and under general smoothness assumptions.
Emphasis is on the intrinsic role of the truncation point both for the direct as well as for the inverse problem, which are related through the modulus of continuity as this was recently highlighted by Knapik and Salomond (2018). 
\end{abstract}




\end{frontmatter}

\section{Outline}
\label{sec:outline}
We study the problem of recovering an \emph{unknown function} $f$ from a
noisy and indirect \emph{observation} $Y^{n}$. In particular, we consider a
class of inverse problems in Hilbert space, given as
\begin{equation}
  \label{eq:Ys}
  Y^{n} = \opA f + \frac 1 {\sqrt n} \xi.
\end{equation}
Here~$\opA\colon X \to Y$ is a linear mapping between two separable Hilbert spaces~$X$ and~$Y$, termed \emph{the forward operator}. 
For our analysis, we shall assume that the mapping~$\opA$ is compact and injective. It will be clear from the assumptions made later, that the injectivity can easily be relaxed. These assumptions will also entail the compactness of~$\opA$.
The \emph{observational noise} is assumed to be additive, modeled as a Gaussian white noise~$\xi$ in the space $Y$, scaled by $\frac{1}{\sqrt{n}}$. The problem of recovering the unknown $f$ from the observation $Y^{n}$ is assumed to be
ill-posed, in the sense that $\opA$ is not continuously invertible on its range~$\mathcal R(\opA)\subset Y$. 
In particular, this means that~$\mathcal R(\opA)$ is not contained in a finite-dimensional subspace. Notice that although the white noise $\xi$ can be defined by
its actions in the space $Y$, it almost surely does not belong to~$Y$. Rigorous meaning to model~\eqref{eq:Ys} can be
given using the theory of stochastic processes, see Section~6.1.1 in~\cite{MR3588285}.  

In the Bayesian approach to such inverse problems, we postulate a
prior distribution $\Pi$ on $f$ and combine with the (Gaussian) data
likelihood $P^{n}_f$ to obtain the posterior distribution
$\Pi(\cdot|Y^{n})$ on $f|Y^{n}$, see~\cite{MR3839555} for a comprehensive overview of the area. We are interested in studying the
frequentist performance of the posterior distribution in the small
noise asymptotic regime~$\frac 1 {\sqrt n}\to 0$, and hence~$n\to\infty$. More specifically, we consider
observations generated from a fixed underlying element
$f_{0}\in X$, $Y^{n}\sim P^{n}_{f_0}$, and study rates of contraction
of the resulting posterior distribution around $f_{0}$, as $n\to\infty$.

The study of rates of posterior contraction for inverse problems has received great attention in the last decade, initiated by~\cite{MR2906881}. The authors of that study considered Gaussian priors
which were conjugate to the Gaussian likelihood. This results in Gaussian posteriors, having explicitly known posterior mean and covariance operator. Moreover, by assuming that the prior covariance
operator and the linear map $\opA$ are mutually diagonalizable, the
infinite dimensional inverse problem was reduced to an infinite
product of one-dimensional problems. In this way, 
posterior
contraction rates could be determined using explicit calculations both
for moderately, and in the subsequent studies~\cite{MR3031282} and
\cite{ASZ14}, for severely ill-posed linear forward operators.  This approach
was surveyed and extended to general ill-posedness of the linear operator by the present authors in~\cite{MR3815105},
using techniques from regularization theory.

Several works extended the diagonal linear Gaussian-conjugate setting
to various other directions, for example~\cite{ALS13} and~\cite{MR3985479}
studied linear forward operators which are not simultaneously
diagonalizable with the covariance operator of the Gaussian prior, and
\cite{MR3535664} studied linear hypoelliptic pseudo-differential
forward operators with Gaussian priors.

More recently, there has been a wealth of contributions in more complex inverse problems, including non-linear ones arising in PDE models, see for example~\cite{MR4118619, MR4151406, kweku19}.
 Another line of progress has been the consideration of more general priors, so far for linear inverse problems, see~\cite{KR13} and~\cite{MR4116718}. 
 The idea underlying all of these works, is to first establish rates of contraction for the related direct problem
\begin{equation}
  \label{eq:direct}
  Y^{n} = g + \frac 1 {\sqrt n} \xi,
\end{equation}
with unknown  $g=\opA f$, in which the data~$\yn$ are generated from~$g_0 = \opA f_0$.
Once such rates are established, the strategy is
to control distances on the level of $f$ by distances on the level of $g$, when restricting on a sieve set $S_n$ on which the inversion of $\opA$ is well-behaved. This enables to translate rates for the direct problem to rates for the inverse problem when the posterior is restricted on the sieve set $S_n$. If the posterior mass concentrates on $S_n$, then these rates are also valid for the unrestricted posterior. In order to establish direct rates, the authors of the above-mentioned studies use the testing approach, see~\cite{MR2332274}.

Here we shall explore the methodology proposed by~\cite{MR3757524}, which explicitly uses the \emph{modulus of continuity} (function) in order to translate rates for the direct to rates for the inverse problem. This approach is in principle general, however, so
far it has been applied to 
certain linear inverse problems, with moderately and severely ill-posed forward operators, under Sobolev-type smoothness assumptions on the truth $f_{0}$. Our work is also related to~\cite{MR4116718}, in that both works use approximation-theoretic techniques to control the inversion or $\opA$.

We consider (centered) Gaussian priors on $f$, arising by truncating the series representation of an underlying infinite-dimensional prior on the separable Hilbert space $X$, see e.g.~\cite[Sect.~2.4]{MR3839555}. 
We develop a comprehensive theory for establishing rates of contraction for general linear inverse problems, under general smoothness conditions, with a particular focus on the optimal choice of the truncation level. Truncated priors are both practically relevant since when implementing one needs to truncate, but also can lead to optimal rates of contraction for a smoothness-dependent choice of the truncation level as a function of $n$, see e.g.~\cite{MR2418663}. Furthermore, in \cite{KR13}, it was shown that putting a hyper-prior on the truncation level can lead to adaptation to unknown smoothness. This was done in the context of inverse problems with specific types of smoothness (Sobolev) and degree of ill-posedness of the operator (power or exponential type). See also~\cite{MR3091697}, where direct models are studied. The extension of adaptation to the general framework which we consider here, is interesting but beyond the scope of this work.

 Contraction rates for the problems~\eqref{eq:Ys} and~\eqref{eq:direct} are related through the modulus of continuity of the mapping~$\opA^{-1}$. Thus, knowing a contraction rate, say~$\delta_n$ for the direct problem~\eqref{eq:direct}, and knowing the behavior of the modulus of continuity~$\omega_{f_0}(\opA^{-1},S_n,\delta),\ \delta>0$, where~$S_n$ is the (finite-dimensional) support of the prior, we obtain a contraction rate for~$f_0$ as~$\omega_{f_0}(\opA^{-1},S_n,\delta_n),\ n\to\infty$. In this program, the role of the truncation level~$k=k(n)$ is most important. There is $k^{(1)} =k^{(1)}(n)$ that should be used for the inverse problem, $k^{(2)}=k^{(2)}(n)$ which works for the direct problem, and finally~$k^{(3)}=k^{(3)}(n)$ used in the modulus of continuity. For the plan, as outlined above, to work we need to establish that actually a universal choice~$k=k(n)$ is suited for all three problems.

In Section~\ref{sec:setting} we shall introduce the overall setting of the study, and we shall formulate Theorem~\ref{thm:direct-inverse}, which comprises the main achievements of this study. The rest of the study is composed of four parts.

In Section~\ref{sec:direct} we will develop the tools needed to analyze the direct problem~\eqref{eq:direct} and obtain~$k^{(2)}(n)$ depending on the underlying prior covariance and the smoothness of $g_{0}$. Due to linearity, Gaussian priors
on~$f$ induce Gaussian priors on~$g=\opA f$, which streamlines the
analysis of 
problem~\eqref{eq:direct}. However, the smoothness of the induced true element in the
direct problem, $g_0=\opA f_0$, depends on the smoothing
properties of the operator $\opA$ and in particular, $g_0$ might not
belong to any of the  standard smoothness classes. For this reason, we shall study rates of contraction in the (direct) white noise model, given a Gaussian prior on $g$ and under general smoothness assumptions on~$g_{0}$. 
Emphasis will be given on the construction of the prior. We shall analyze truncated Gaussian priors posed directly on~$g$, obtained by truncating the series representation of an underlying infinite-dimensional Gaussian prior (called 'native', below), but also priors that are obtained as linear transformations of truncated Gaussian priors chosen for some~$f$ (called 'inherited', below). The former is relevant in the context of~\eqref{eq:direct} when $\opA$ commutes with the covariance operator of the underlying Gaussian prior on $f$. In the latter non-commuting case the analysis is more involved and restrictive.
This section is self-contained and may be of independent interest. The main result is Theorem~\ref{thm:spc-bound} and it includes a way of assessing the optimality of the obtained bounds.

In Section~\ref{sec:inverseP} we introduce the modulus of continuity, and we shall discuss its behavior, as~$\delta\to 0$, under an approximation-theoretic perspective. The main result here will be  Theorem~\ref{thm:phi-theta-bound}, indicating the choice~$k^{(3)}(n)$.

In Section~\ref{sec:relating} we shall show that the choice~$k^{(2)}(n)$ yields optimal behavior also of the modulus of continuity, such that we may let~$k^{(2)}=k^{(3)}$. Therefore, letting~$k^{(1)}(n) = k^{(2)}(n) = k^{(3)}(n)$ yields a contraction rate for the inverse problem allowing us to establish the main result, 
Theorem~\ref{thm:direct-inverse}.

We exemplify the obtained (general) bounds at 'standard instances', with forward operators which have a moderate decay of singular numbers, a (severe) exponential decay, but also a (mild) logarithmic decay in Section~\ref{sec:xmpls}. 
Many examples for such instances are known. The Radon transform is prototypical for a power type decay of singular numbers, see the monograph~\cite{MR1847845}. The heat equation is known to exhibit an exponential decay of the singular numbers, see~\cite{MR1408680}, which is also a good resource for more examples.
In particular, we explicitly derive (minimax) contraction
rates under Sobolev-type smoothness, both for the direct and inverse problems, in the above-mentioned instances.

In order to streamline the presentation the proofs of the results are given separately in Section~\ref{sec:proofs}. 

\section{Setting and main result}
\label{sec:setting}
We next define certain concepts that will be needed for the development of the paper. After establishing some notation, 
we introduce rates of posterior contraction for the direct and inverse problem, links between the main operators pertaining to our analysis, as well as the concept of smoothness that  will be used throughout the paper. We formulate the main result in Section~\ref{sec:main-result}.

\subsection{Notation}
We shall agree upon the following notation. We denote by $\norm{\cdot}{X}, \norm{\cdot}{Y}$ the norms in $X,Y$, respectively. When there is no confusion we will use plainly $\norm{\cdot}{}$ and the same notation will be used for the operator norm in $X$ or $Y$. For a (compact
self-adjoint) linear operator, say~$\opG\colon X\to X$ we denote by~$s_{j}(\opG),\
j=1,2,\dots$ the non-increasing sequence of its singular numbers. 
We reserve the notation~$s_{j} = s_{j}(H),\ j=1,2,\dots$ for the operator~$H:=\asta$, the self-adjoint companion to the mapping~$\opA$.

Furthermore, according to whether we study the inverse
problem~(\ref{eq:Ys}) or the related direct problem~(\ref{eq:direct})
we shall denote elements by~$f$ or~$g$ ($f_{0}, g_{0}$ for the corresponding true elements).

For two sequences $(a_n)$ and $(b_n)$ of real numbers, $a_n\asymp b_n$ means that $|a_n/b_n|$ is bounded away from zero and infinity, while $a_n\,\lesssim\,b_n$ means that $a_n/b_n$ is bounded from above.

\subsection{Prior distribution and posterior contraction}
\label{sec:contraction}
We shall use priors $\Pi$ which are truncations of a Gaussian prior~$\mathcal N(0,\Lambda)$, for a self-adjoint, trace-class and positive definite covariance operator $\Lambda$. Such priors are characterized by the underlying covariance~$\Lambda$ and the truncation level~$k$. Below we shall use the notation $\Lambda=\lmf$ for Gaussian priors on $f$ in the context of \eqref{eq:Ys} and $\Lambda=\lmg$ for Gaussian priors on $g$ in the context of \eqref{eq:direct}. 

\begin{xmplno}[$\alpha$-regular prior]
Given~$\alpha>0$, we  call the prior~$\mathcal N(0, \Lambda)$ $\alpha$\emph{-regular}, if the singular numbers~$s_{j}(\Lambda)$ 
decay like~$s_{j}(\Lambda)\asymp j^{- (1 + 2\alpha)},\ j=1,2,\dots$
\end{xmplno}

Let us fix a prior distribution~$\Pi$ on the unknown~$f$, and consider
data~$Y^{n}$ generated from the model~(\ref{eq:Ys}) for a fixed true element $f_0\in X$, $Y^n\sim P^n_{f_0}$. We are interested in deriving rates of contraction of the posterior $\Pi(\cdot|Y^{n})$ around~$f_{0}$, in the small noise limit $n\to\infty$.
In particular, we find sequences $\veps_n\to0$ such that, for an arbitrary sequence $M_n\to\infty$, it holds 
\begin{equation}
\E_{0}\Pi\lr{f, \ \norm{f - f_{0}}{X} > M_{n}\veps_{n}|
  Y^{n}}\to0\label{it:inverse}.
\end{equation}
Here $\E_{0}$ denotes expectation with respect to $P^n_{f_0}$. 

One can also derive rates of posterior contraction around $\opA f_{0}$, that is sequences $\delta_n\to0$, such that for arbitrary $M_n\to\infty$
\begin{equation}
  \E_{0}\Pi\lr{f, \ \norm{\opA(f - f_{0})}{Y} > M_{n}\delta_{n}|
  Y^{n}}\to0\label{it:direct}.
\end{equation}

Such rates $\delta_n$ and $\veps_n$ will be called rates of contraction for the \emph{direct} and \emph{inverse} problem, respectively. We are going to derive rates of contraction for the inverse problem, by deriving rates of contraction  for the direct problem and using the modulus of continuity as was proposed in~\cite{MR3757524}. These rates of contraction will be obtained by means of the \emph{squared posterior contraction}, a concept which will be introduced in detail in~\S~\ref{sec:direct}.


\subsection{Relating operators in Hilbert space}
\label{sec:relating-ops}
As highlighted in the introduction we deal with several
operators. In order to facilitate our analysis we need to relate these operators and to this end we introduce the following concept.

\begin{de}
  [index function] \label{de:index-noncomm}
  A function~$\rho\colon (0,\infty)\to (0,\infty)$ is called \emph{index function} if it
  is continuous, non-decreasing, with~$\rho(0+)=0$. For an index function~$\rho$, we
assign the companion~$\Theta_{\rho}(t) := \sqrt t \rho(t),\ t>0$.
\end{de}

The primary operator we deal with is the forward operator~$\opA:X\to Y$ which governs
equation~(\ref{eq:Ys}). Its self-adjoint companion~$H= \asta:X\to X$
will have the central role in our analysis. We mention the following identity:
\begin{equation}\label{eq:a-asta12}
\norm{\opA f}{Y} =\norm{\lr{\asta}^{1/2}f}{X} = \norm{H^{1/2}f}{X}, 
f\in X.  
\end{equation}

Furthermore, in order to obtain rates of contraction for inverse problems from rates of contraction for direct problems, we will need to link the underlying (untruncated)
covariance operator~$\lmf$ of the Gaussian prior
for~$f$ to the operator~$H$.  We will study two cases. Initially we shall
assume that~$\lmf$ and~$H$ commute. Precisely, we impose the following
assumption: 

\begin{ass}[prior in scale]
\label{ass:link-noncomm}
  There is an index 
  function~$\chi$ such that
  $$
  \lmf= \chi^{2}(H).
  $$  
\end{ass}

This commutative case allows for a general analysis, but has limited
applicability, as it may be hard to design a truncated  Gaussian
prior, because the singular basis of~$H$ (and hence $\lmf$) may not be known.

Instead, we may relax the commutativity assumption, and impose a
corresponding link condition.
\begin{ass}
  [prior linked to scale]\label{ass:prior-linked2scale-noncomm}
  There is some exponent~$a\geq 1/2$ such that
  $$
  \norm{\lr{\lmf}^{1/2}f}{X} \asymp \norm{H^{a}f}{X},\quad f\in X.
  $$
\end{ass}
The requirement~$a\geq 1/2$ has a natural reason. We need to link the operators~$\opA$ and~$\lmf$ in several places, and by virtue of~\eqref{eq:a-asta12} this can be done via $H^{1/2}$. Therefore, the case $a=1/2$ needs to be covered in the assumption.

We mention, that within the non-commuting case we confine to power
type links. 
Also, notice that~$\chi(t) = t^a$ in Assumption~\ref{ass:link-noncomm} yields a special instance of Assumption~\ref{ass:prior-linked2scale-noncomm}.
One may extend to more general index functions, but
for the sake of simplicity we do not pursue this direction here.

Both Assumptions~\ref{ass:link-noncomm} and~\ref{ass:prior-linked2scale-noncomm} have impact on the mapping properties of~$\opA$. First, the mappings~$H$ and~$\lmf$ share the same null spaces (kernels). Also, since the covariance operator~$\lmf$, being trace-class,  is compact,  this compactness transfers to~$H$, and a fortiori to~$\opA$. Thus, under these links the compactness of~$\opA$ cannot be avoided, while its injectivity can be relaxed by factoring out the common null spaces.

\begin{rem}
In our analysis the self-adjoint companion~$H$ to the operator~$\opA$ plays the role of the central operator. When studying contraction rates for the inverse problem \eqref{eq:Ys}, smoothness will be given with respect to it. Instead, one might give this role to the operator~$\lmf$, and consider smoothness with respect to this operator. The analysis would be similar, and some results in this direction are given in~\cite[Sec. 5]{MR4116718}.
\end{rem}

 
\subsection{Smoothness concept}
\label{sec:smoothness}

For the subsequent analysis it will be convenient to introduce the
smoothness of an element~$h\in \spZ$ in a Hilbert space~$\spZ$,  with respect to some injective
positive self-adjoint operator, say~$\opG \colon \spZ \to \spZ$, in terms of
\emph{general source conditions}.

\begin{de}
  [Source set]\label{def:gen-source}
  Given a positive-definite, self-adjoint operator~$\opG$, and an index
  function~$\rho$ the set
  \begin{equation}
    \label{eq:gen-source}
 \opG_{\rho}:= \set{h, \quad h= \rho(\opG)v,\quad \norm{v}{\spZ}\leq 1}.    
\end{equation}
is called a source set.
\end{de}
\begin{rem}
  The sets~$\opG_{\rho}$ from above are ellipsoids in the Hilbert
  space~$\spZ$. The element~$v$ is often called \emph{source element},
  and the representation~$h= \rho(\opG)v$ is called~\emph{source-wise
    representation}.   
  We emphasize that elements in~$\opG_\rho$ are in the range of~$\rho(\opG)$, such that for the subsequent analysis Douglas' Range Inclusion Theorem, see its formulation in~\cite{MR3985479}, will be used several times.
  It is seen from~\cite{MR2384768} that, given the injective
  operator~$\opG$ each element~$h\in \spZ$ has a source-wise
  representation for some 
  index function~$\rho$.
\end{rem}

Below, we shall use this concept for specific operators, and specific
functions. For instance, the set
\begin{equation}
  \label{eq:lpsi}
\lpsi:= \lr{\lmg}_\psi\subset Y
\end{equation}
will correspond to a source set for the operator~$\opG:= \lmg\colon Y
\to Y$, and the index function~$\psi$. In some cases we will assume that the index function~$\psi$ is \emph{operator
  concave}. The formal definition is given in Section \ref{sec:proofs}, but we refer to~\cite[Chapt.~X]{MR1477662} for a comprehensive
discussion. Here we mention that a power type
index function~$\psi(t) = t^{a}$ with~$a>0$, is concave exactly if it
is operator concave, hence~$0 < a \leq 1$.

\begin{xmplno}[Sobolev-type smoothness]
  \label{xmpl:es-beta}
  Let~$u_{1},u_{2},\dots$ be the eigenbasis of the compact self-adjoint operator~$\opG$,
  arranged such that the corresponding eigenvalues are non-increasing; this example can be considered either in $X$ or $Y$.
  Given some~$\beta>0$ we consider the \emph{Sobolev-type} ellipsoid
  \begin{equation}
    \label{eq:sobolev-ell}
    \eS \beta(R) := \set{h,\quad  \sum_{j=1}^{\infty}
j^{2\beta}\abs{\scalar{h}{u_{j}}}^{2}\leq R^{2}}.
\end{equation}
Now, suppose that the singular values of~$\opG$ decay as~$s_j(\opG) \asymp j^{-\gamma}$
for some~$\gamma>0$. Then it is a routine matter to check that~$h\in \eS
\beta$ yields that~$h\in \opG_{\rho}$ for an index function~$\rho(t) \propto
t^{\beta/\gamma},\ t\geq 0$,
see~\cite[Prop.~2]{MR3985479}. Similarly the converse holds true, and there is thus a one-to-one correspondence between
Sobolev-type ellipsoids and power-type source-wise representations for
such operators~$\opG$.
\end{xmplno}

\subsection{Main result}
\label{sec:main-result}
We aim at deriving posterior contraction rates for the inverse problem \eqref{eq:Ys}, from contraction rates for the corresponding 
direct problem \eqref{eq:direct} by using the modulus of continuity, for truncated Gaussian priors.
The, truncated at level~$\kn$, Gaussian prior on $f$, has all its mass on a finite dimensional subspace~$\Xkn$, and so does the posterior through the linear model~\eqref{eq:Ys}. 

The following result links the rates of posterior contraction
corresponding to the inverse problem~\eqref{eq:Ys} and the direct problem~\eqref{eq:direct}. It is an
immediate corollary of~\cite[Theorem 2.1]{MR3757524}. 
  \begin{prop}\label{prop:ks}
   Assume we put a, truncated at level $k_n$, Gaussian prior on $f$.
   Let $\delta_n\to0$ be a rate of contraction for the direct
   problem~\eqref{eq:direct} around $g_0=~\opA f_0\in~Y$, for some $f_0\in X$. Then $\veps_n:=\omega_{f_0}(H^{-1/2},\Xkn,\delta_n)$, where $H=\asta$, 
is a rate of contraction for the inverse problem~\eqref{eq:Ys}, at~$f_0$.
\end{prop}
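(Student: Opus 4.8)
The statement is advertised as an immediate corollary of Theorem 2.1 in \cite{MR3757524}, so the plan is to verify that the abstract hypotheses of that theorem are met in our setting and then to read off the conclusion. First I would recall the notion of modulus of continuity: for a set $S\subset X$, an element $f_0$, and $\delta>0$, one sets
\begin{equation*}
\omega_{f_0}(\opA^{-1},S,\delta) = \sup\set{\norm{f-f_0}{X},\ f\in S,\ \norm{\opA(f-f_0)}{Y}\leq \delta}.
\end{equation*}
By the identity~\eqref{eq:a-asta12}, $\norm{\opA(f-f_0)}{Y} = \norm{H^{1/2}(f-f_0)}{X}$, so $\omega_{f_0}(\opA^{-1},S,\delta) = \omega_{f_0}(H^{-1/2},S,\delta)$; this is just a rewriting that lets us state the conclusion directly in terms of $H$, as in the proposition.

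The key structural point is that the truncated prior puts all its mass on the finite-dimensional subspace $\Xkn$, and hence — because the model~\eqref{eq:Ys} is linear and the posterior is obtained by Bayes' rule with a Gaussian likelihood — the posterior $\Pi(\cdot\mid Y^n)$ is also supported on $\Xkn$. Therefore I can take the sieve set in the Knapik--Salomond framework to be $S_n := \Xkn$ itself, with $\Pi(f\in\Xkn\mid Y^n)=1$ almost surely; the "posterior mass on the sieve" condition is then trivially satisfied. With this choice, the abstract argument runs as follows: on the event $\set{\norm{\opA(f-f_0)}{Y}\leq M_n\delta_n}$ intersected with $\set{f\in\Xkn}$, the definition of the modulus gives $\norm{f-f_0}{X}\leq \omega_{f_0}(H^{-1/2},\Xkn,M_n\delta_n)$. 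Using that $\omega_{f_0}(H^{-1/2},\Xkn,\cdot)$ is non-decreasing and that one can absorb the constant $M_n$ (replacing it by a slightly larger sequence, or invoking the sub-additivity/homogeneity bounds on the modulus used in \cite{MR3757524}), this is controlled by $M_n'\,\omega_{f_0}(H^{-1/2},\Xkn,\delta_n) = M_n'\veps_n$ for a suitable $M_n'\to\infty$. Consequently
\begin{equation*}
\E_0\Pi\lr{f,\ \norm{f-f_0}{X} > M_n'\veps_n \mid Y^n} \leq \E_0\Pi\lr{f,\ \norm{\opA(f-f_0)}{Y} > M_n\delta_n \mid Y^n} \to 0,
\end{equation*}
where the last convergence is exactly the hypothesis that $\delta_n$ is a contraction rate for the direct problem~\eqref{eq:direct}. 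Since $M_n\to\infty$ was arbitrary, so is $M_n'$, and this establishes~\eqref{it:inverse} with $\veps_n = \omega_{f_0}(H^{-1/2},\Xkn,\delta_n)$.

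The only genuine subtlety — and the one step I would be careful about — is the handling of the diverging constant $M_n$ through the modulus of continuity: unlike a norm, $\omega_{f_0}(\opA^{-1},S,\cdot)$ is not homogeneous, so $\omega_{f_0}(H^{-1/2},\Xkn,M_n\delta_n)$ need not equal $M_n\,\omega_{f_0}(H^{-1/2},\Xkn,\delta_n)$. This is precisely the point addressed in \cite[Theorem 2.1]{MR3757524}, where an elementary covering/sub-additivity bound of the form $\omega_{f_0}(\opA^{-1},S,c\delta)\leq \lceil c\rceil\,\omega_{f_0}(\opA^{-1},S,\delta)$ (valid on convex symmetric sets, using the triangle inequality) lets one replace $M_n$ by the companion sequence $\lceil M_n\rceil\to\infty$. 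I would simply cite this bound rather than reprove it. Everything else is bookkeeping: identifying the sieve with the prior's support, using that the posterior inherits this support, and translating the direct-problem statement~\eqref{it:direct} verbatim into the inverse-problem statement~\eqref{it:inverse}.
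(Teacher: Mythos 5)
Your proposal follows exactly the route the paper itself takes: Proposition~\ref{prop:ks} is stated as an immediate corollary of Theorem~2.1 of \cite{MR3757524}, with the sieve taken to be the support $\Xkn$ of the truncated prior (on which the posterior trivially places full mass), and the paper offers no further proof beyond this citation. Your additional care about absorbing the diverging factor $M_n$ through the (non-homogeneous) modulus of continuity is the right point to worry about and is precisely what the cited theorem is designed to handle, so the proposal is correct and essentially identical in approach.
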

We can thus obtain contraction rates $\veps_n$ for
  the inverse problem by obtaining rates $\delta_n$ for the direct
  problem~\eqref{eq:direct}, and bounds for the inherent modulus of continuity for the inverse problem. The main result of the study implements this program in a general setting with a specific choice of the truncation level~$\kn$.
\begin{thm}\label{thm:direct-inverse}
Consider the inverse problem \eqref{eq:Ys}, recall $H=\asta$,
and
suppose that~$f_{0}$ has smoothness~$H_\varphi$. Assume we put a
truncated Gaussian prior $\mathcal{N}(0, P_{\kn}\lmf P_{\kn})$ on $f$,
with $\lmf$ a self-adjoint, positive-definite, trace-class, linear
operator in $X$, and $P_{\kn}$ the singular projection of $\lmf$. We specify the related (covariance) operator~$\lmg=A\lmf A^\ast$.

Under
\begin{enumerate}
    \item[-] Assumption \ref{ass:link-noncomm}, or
    \item[-] Assumption~\ref{ass:prior-linked2scale-noncomm} with $\mu\leq a$,
\end{enumerate} 
where for the latter assumption we specify~$\chi(t) = t^a$, and~$\varphi(t) =t^{\mu}$, consider the index function
\begin{equation}
  \label{eq:psi-def}
  \psi(t) =\Theta_{\varphi}(\lr{\Theta_{\chi}^{2}}^{-1}(t)), \quad t>0.
\end{equation}

For the choice~$\kn$ according to
\begin{equation}\label{eq:kn-def}
   \kn := \max\set{j,\quad \psi^{2}\lr{s_{j}(\lmg)} > \max\set{\psi^2\lr{\frac 1 n },\frac j n}},
\end{equation}   
let $\delta_n$ be given as
\begin{equation}\label{eq:spc-main-thm}
   \delta_n:= C \max\set{\psi^{2}\lr{\frac 1 n},\frac {\kn} n} 
\end{equation}
for some constant~$C$. 
Then the posterior contracts around $f_{0}$ at a rate 
  \[
  \veps_n\asymp \varphi(\Theta_\varphi^{-1}(\delta_n)),\quad n\to\infty.
  \]
\end{thm}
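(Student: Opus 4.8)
The theorem is essentially a synthesis of three ingredients that are each established separately in the body of the paper, so the plan is to verify that a single truncation level $\kn$ can be used throughout and then chain the pieces. First I would invoke Proposition~\ref{prop:ks}: once $\delta_n$ is a contraction rate for the direct problem~\eqref{eq:direct} with the truncated prior at level $\kn$, the quantity $\veps_n = \omega_{f_0}(H^{-1/2},\Xkn,\delta_n)$ is automatically a contraction rate for the inverse problem. So the two substantive tasks are (i) to show that the choice of $\kn$ in~\eqref{eq:kn-def} and the resulting $\delta_n$ in~\eqref{eq:spc-main-thm} genuinely give a direct-problem contraction rate, and (ii) to evaluate the modulus of continuity $\omega_{f_0}(H^{-1/2},\Xkn,\delta_n)$ and show it is of order $\varphi(\Theta_\varphi^{-1}(\delta_n))$.

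For (i), the idea is to push the Gaussian prior $\mathcal N(0,P_{\kn}\lmf P_{\kn})$ on $f$ forward through $\opA$ to get the induced truncated Gaussian prior on $g=\opA f$ with covariance related to $\lmg = A\lmf A^\ast$, and then apply Theorem~\ref{thm:spc-bound} of Section~\ref{sec:direct} on squared posterior contraction for the white noise model. To do so I must identify the correct smoothness of $g_0 = \opA f_0$ in the induced scale. Under Assumption~\ref{ass:link-noncomm} ($\lmf = \chi^2(H)$), the operators commute, $f_0$ has smoothness $\varphi$ with respect to $H$, and a direct computation via~\eqref{eq:a-asta12} shows $g_0$ lies in the source set $(\lmg)_\psi$ with $\psi$ exactly as defined in~\eqref{eq:psi-def} — the formula $\psi(t)=\Theta_\varphi((\Theta_\chi^2)^{-1}(t))$ is precisely the bookkeeping that translates $H$-smoothness of $f_0$ into $\lmg$-smoothness of $g_0$. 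Under the weaker Assumption~\ref{ass:prior-linked2scale-noncomm} with power links $\chi(t)=t^a$, $\varphi(t)=t^\mu$, and $\mu\le a$, one argues the same inclusion using Douglas' range inclusion theorem to handle the non-commutativity (this is where the hypothesis $\mu \le a$, equivalently enough smoothness relative to the link exponent, is needed to guarantee $g_0$ still sits in the relevant source set). Feeding this $\psi$-smoothness into Theorem~\ref{thm:spc-bound} yields a squared-posterior-contraction bound whose optimal balance is attained exactly at the truncation level~\eqref{eq:kn-def}, giving the rate $\delta_n$ in~\eqref{eq:spc-main-thm}; this is the direct-problem rate.

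For (ii), I would use the results of Sections~\ref{sec:inverseP} and~\ref{sec:relating}. Theorem~\ref{thm:phi-theta-bound} gives the behavior of the modulus of continuity $\omega_{f_0}(H^{-1/2},\Xk,\delta)$ as $\delta\to 0$ under the smoothness assumption $f_0\in H_\varphi$, in terms of the companion $\Theta_\varphi$; generically one expects $\omega_{f_0}(H^{-1/2},\Xk,\delta)\asymp \varphi(\Theta_\varphi^{-1}(\delta))$ provided the truncation level $k=k(n)$ is chosen to balance the approximation and noise-propagation contributions to the modulus, i.e. provided $k$ equals the ideal level $k^{(3)}(n)$ for the modulus. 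The crux of Section~\ref{sec:relating} is that the $k^{(2)}(n)$ coming out of~\eqref{eq:kn-def} also balances the modulus of continuity optimally, so that $k^{(2)}=k^{(3)}$ and we may legitimately take $\kn$ as the single universal truncation level. Granting this, plugging $\Xkn$ and $\delta=\delta_n$ into the bound from Theorem~\ref{thm:phi-theta-bound} yields $\veps_n\asymp \varphi(\Theta_\varphi^{-1}(\delta_n))$, which is the assertion.

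\textbf{Main obstacle.} The routine parts are the source-condition translations and the substitution into the black-box theorems. The genuinely delicate step is establishing that the \emph{same} $\kn$ from~\eqref{eq:kn-def} is simultaneously near-optimal for the direct problem, for the inverse problem, and inside the modulus of continuity — i.e. that $k^{(1)}=k^{(2)}=k^{(3)}$ — and, in the non-commuting case, controlling the loss incurred when $\lmf$ and $H$ do not share an eigenbasis, which is why only power-type links with $\mu\le a$ are admitted. Verifying that the definition of $\psi$ in~\eqref{eq:psi-def} is exactly what makes these three levels coincide, and that the $\max$ structure in~\eqref{eq:kn-def}–\eqref{eq:spc-main-thm} correctly captures the small-noise-dominated versus dimension-dominated regimes, is where the real work of the proof lies.
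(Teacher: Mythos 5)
Your plan is correct and coincides with the paper's own (distributed) proof: Proposition~\ref{prop:ks} plus the smoothness translation of Lemmas~\ref{lem:commuting-H-lmf}--\ref{lem:smoothness-h2lmg} feeding into Theorem~\ref{thm:spc-bound} for the direct rate, and then Propositions~\ref{prop:relation} and~\ref{prop:relation-noncomm} showing that the same $\kn$ from~\eqref{eq:kn-def} optimizes the modulus bound. The only step you leave implicit is verifying that the singular spaces $\Xk$ of $\lmf$ satisfy Assumption~\ref{ass:relations} (the paper's Proposition~\ref{prop:validity-ass-relations}), which is needed before Proposition~\ref{prop:main-bound} can be applied to the modulus.
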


The strategy for proving Theorem~\ref{thm:direct-inverse} is loosely outlined at the end of Section~\ref{sec:outline}. A main component of both the result and its proof, is the fact that the truncation levels~$\kn$, as given in~\eqref{eq:kn-def}, optimize both the rates $\delta_n$ for the direct problem as well as the bounds on the modulus of continuity. These considerations can be found in §~\ref{sec:relating}, where we establish the steps for proving Theorem~\ref{thm:direct-inverse}.


\section{Direct signal estimation under truncated Gaussian priors} 
\label{sec:direct}

Here we consider the Bayesian approach to signal estimation under
white noise in the space $Y$, that is, the model
\begin{equation}
  \label{eq:noise-model}
  \yn = g + \frac 1 {\sqrt n} \xi,
\end{equation}
where~$\xi$ is Gaussian white noise in $Y$.

For linear Gaussian models with Gaussian priors, it is convenient to describe posterior contraction in terms of the \emph{squared posterior contraction (SPC)}, which by Chebyshev's inequality, is the square of \emph{a} rate of contraction. 

  For an element~$g_{0}\in Y$,
given data~$\yn$, and a truncation level~$k$ for the
(Gaussian) prior, we assign 
\begin{equation}\label{eq:spc-def}
    \spc(k,g_{0}) := \mathbb E^{g_{0}} \mathbb E^{\yn}_k \norm{g_{0} - g}{Y}^2,
\end{equation}
where the inner expectation is with respect to the (Gaussian) posterior 
distribution, whereas the outer expectation concerns the sampling
distribution, given the element~$g_{0}$, that is,\ the data generating
distribution. The $\spc$ for (regularized) untruncated Gaussian priors in the context of (linear) inverse problems, was
analyzed in the previous study~\cite{MR3815105}. Here we develop a
similar approach for direct problems with truncated Gaussian priors, and we will exhibit
some features, as these are specific for the latter.

Having fixed a class~$\mathcal
F\subset Y$, and given the truncation level~$k$, we assign
\begin{equation}
   \spc^{\mathcal F}(k) := \sup_{g_{0}\in \mathcal F} \spc(k,g_{0}),\label{ali:spc-uniform}  
\end{equation}
which is a squared rate of contraction, holding uniformly over the class~$\mathcal F$.

\subsection{Native and inherited Gaussian priors}
In its simplest form, a centered truncated Gaussian prior for~$g$ can be defined
using some orthonormal system, say~$y_{1},y_{2},\dots$ in~$Y$, independent and identically distributed standard
Gaussian random variables~$\gamma_{1},\gamma_{2},\dots$, and  a
square summable positive sequence~$\sigma_{1},\sigma_{2},\dots$, as
\begin{equation}
  \label{eq:prior-native}
  \Pg :=\mathcal{L}\Big( \sum_{j=1}^{k}\sigma_{j} \gamma_{j}y_{j}\Big).
\end{equation}
The square summability of the sequence~$\sigma_{j},\ j=1,2,\dots$ ensures
that the prior~$\Pg$ is the (singular) projection of an
infinite dimensional prior supported in $Y$, having 
finite-trace covariance operator~$\lmg = \sum_{j=1}^{\infty} \sigma_{j}^{2}
y_{j }\otimes y_{j}$. Hence, the prior~$\Pg$ has
covariance~$\Ck=Q_{k}\lmg Q_{k}$, where $Q_{k}$ are orthogonal projections
onto~$\myspan\set{y_{1},\dots,y_{k}}$. We shall call this a
\emph{native (truncated) prior} for~$g$.

On the other hand, a centered finite dimensional Gaussian prior for~$g$ may be defined using a linear transformation of some native truncated prior~$\Pf$
for~$f\in X$, defined along some orthonormal system, say~$x_{1},x_{2},\dots$,
and with corresponding projections~$P_{k}$ onto $\Xk:=\myspan\set{x_{1},\dots,x_{k}}$, thus having
covariance~$P_{k}\lmf P_{k}$. The prior~$\Pg$ for~$g\in Y$
is then obtained as the push forward~$\opT_{\sharp \lr{\Pf}}$ under some linear
mapping~$\opT \colon X \to~Y$, and is supported on~$\opT \Xk$. The Gaussian prior $\Pg$ will thus have
covariance~$\Ck = \opT P_{k}\lmf P_{k} \opT^{\ast}$, and we shall call this
an \emph{inherited (truncated) prior}. Inherited priors are relevant for example when studying the direct problem \eqref{eq:direct} associated to the inverse problem~\eqref{eq:Ys}. When using such an inherited prior, we will quantify the relation between the mapping~$\opT$ and the covariance operator~$\lmf$ driving~$\Pf$, in order to control the effect of~$\Ck$.




In this context we shall measure the smoothness of the 
truth~$g_0$ relative to the covariance operator~$\lmg$ of the underlying infinite dimensional Gaussian prior on $g$, and we shall
assume the smoothness condition~$g_{0}\in\lpsi$ for some index
function~$\psi$, see~(\ref{eq:lpsi}). For inherited priors, the operator~$\lmg$ will be given
as the covariance of the push forward of the underlying infinite dimensional prior on~$f$,~$\lmg= \opT \lmf \opT^{\ast}$. We stress that for inherited priors in general
we cannot ensure that the covariance $C_k$ corresponds to the singular projection of $\lmg$, that is that~$ \opT P_{k} \lmf P_{k}
\opT^{\ast}=Q_{k}\lmg Q_{k}$ or equivalently that $\opT \Xk$ coincide with the singular spaces of $\lmg$; see the next subsection for details. Nevertheless, we still say that $C_k$ is truncated at level $k$, since it has rank $k$.



\subsection{Basic SPC bound}
We shall start with proving a basic bound on the squared posterior contraction as given in~\eqref{ali:spc-uniform} in the white noise model~\eqref{eq:noise-model}, for both native and inherited truncated Gaussian priors~$\mathcal N(0,C_{k})$, under general smoothness on the truth.

When treating inherited priors, it will be important that the
projections~$P_{k}$ in the corresponding $C_{k}$, are along the singular spaces of~$\lmf$, such
that~$P_{k} \lmf P_{k} = \lr{\lmf}^{1/2} P_{k} \lr{\lmf}^{1/2}$, and
similarly,~$\Ck = \opT \lr{\lmf}^{1/2} P_{k} \lr{\lmf}^{1/2} \opT^{\ast}$.
If $\lmf$ and $\opT^\ast\opT$ commute, then we will show that $\Ck$ coincides with  the
singular projection of $\lmg$, and we can bound the SPC as in the
  native case.

In the non-commuting case we cannot ensure that
$\Ck$ is the singular projection of $\lmg$. We assign the intrinsic
mapping~$H:= \opT^{\ast}\opT$, and work under Assumption
\ref{ass:prior-linked2scale-noncomm}, linking the operators $\lmf$ and
$\opT$ via~$H$. Notice that our treatment in Section
  \ref{sec:direct} is standalone and does not necessarily correspond
  to an inverse problem, however with a slight abuse of notation we can
  let~$H= \opT^{\ast}\opT$ and assume the link condition described in Assumption
\ref{ass:prior-linked2scale-noncomm}. 

Within  this (finite dimensional) Gaussian-Gaussian conjugate setting, given the centered Gaussian prior with covariance~$\Ck$, the posterior is also Gaussian with  mean and covariance
\begin{align}
  \hat g_{k} &= \lr{\Ck + \frac 1 n}^{-1} \Ck Y^{n},\label{ali:mean-ck}\\
  C_{post,k} &= \frac 1 n  \lr{\Ck + \frac 1 n}^{-1}\Ck.\label{ali:cov-ck}
\end{align}
In alignment with~\cite[Eq.~(3)]{MR3815105}, for any given~$g_0$ and truncation level~$k$, the~$\spc$ is decomposed as 
\begin{equation}
  \label{eq:spc-sum}
  \spc(k,g_{0}) = \norm{ g_{0} - \E\hat g_{k}}{Y}^{2} +  \frac 1 n \E\norm{\lr{\Ck + \frac 1 n}^{-1} \Ck\xi}{}^{2} + \tr\left[C_{post,k}\right],
\end{equation}
where the first summand is the (squared) bias for estimating~$g_{0}$
by using the posterior mean~$\hat g_{k}$, the second summand is the related estimation
variance, whereas the last summand constitutes the posterior spread.
The proof of the next result is based on this decomposition.

\begin{prop}
  \label{prop:error-direct-unified}
 Consider the white noise model~(\ref{eq:noise-model}) with
a Gaussian prior~$\mathcal N(0,  C_{k})$, and with underlying
truth~$g_{0}\in\lpsi$ for some index function~$\psi$ (see~(\ref{eq:lpsi})), where either
\begin{enumerate}
    \item\label{it:case1-prop} $\Ck=Q_k\lmg Q_k$ (native
    ), or   
    \item\label{it:case2-prop} $\Ck=\opT P_k\lmf P_k\opT^\ast$ (inherited with non-commuting $\lmf, \opT^\ast\opT$, linked via Assumption
\ref{ass:prior-linked2scale-noncomm} for $H=\opT^\ast\opT$), 
\end{enumerate}
  and where in the latter case we let~$\lmg=\opT \lmf \opT^\ast$, and we assume that  the function~$\psi$ is operator concave.
There is a constant~$\ca
 \geq 2$ such that
 for any truncation level~$k$  the squared posterior contraction is bounded as 
   \begin{equation}
   \label{eq:spc-bound}
      \spc^{\lpsi}(k)
      \leq \ca\lr{\psi^{2}{\lr{\frac 1 { n}}}+
      \psi^{2}\lr{s_{k+1}(\lmg)} + \frac{k}{n}}.      
    \end{equation}
\end{prop}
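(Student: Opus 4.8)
The plan is to bound each of the three summands in the decomposition~\eqref{eq:spc-sum} separately and uniformly over $g_0\in\lpsi$, and then to combine. Throughout I will use the abbreviation $h_n = 1/n$ for the noise level. The key structural observation is that in both cases the prior covariance $\Ck$ has rank $k$, and that $\Ck$ compares favourably with a truncation of $\lmg$; specifically, in case~\eqref{it:case1-prop} one has $\Ck = Q_k\lmg Q_k$ exactly, while in case~\eqref{it:case2-prop} one uses Assumption~\ref{ass:prior-linked2scale-noncomm}, operator concavity of $\psi$, and Douglas' range inclusion theorem to reduce to a comparable situation. I would first dispatch the two easy terms. The posterior spread $\tr[C_{post,k}]$ is bounded using~\eqref{ali:cov-ck}: since $C_{post,k} = \frac1n(\Ck+\frac1n)^{-1}\Ck$ and $\Ck$ has rank $k$, the function $t\mapsto \frac1n(t+\frac1n)^{-1}t \le \min\{t, \frac1n\}$ applied to the (at most $k$ nonzero) eigenvalues of $\Ck$ gives $\tr[C_{post,k}]\le k/n$; more carefully one splits eigenvalues above and below $1/n$ to get the $k/n$ contribution together with a tail that is dominated by $\psi^2(s_{k+1}(\lmg))$. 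The estimation variance (second summand of~\eqref{eq:spc-sum}) is, by the standard white-noise computation, $\frac1n\sum_j \big(\frac{s_j(\Ck)}{s_j(\Ck)+1/n}\big)^2 \le \frac1n\,\myrank(\Ck) = k/n$, since each factor is bounded by $1$ and only $k$ terms are nonzero.

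The main work is the bias term $\norm{g_0 - \E\hat g_k}{Y}^2$. From~\eqref{ali:mean-ck} one has $\E\hat g_k = (\Ck+\frac1n)^{-1}\Ck g_0$, so the bias equals $\norm{(\Ck+\frac1n)^{-1}\frac1n\, g_0}{Y}^2$, i.e. the action of the residual filter $r_{1/n}(\Ck) := \frac1n(\Ck+\frac1n)^{-1}$ on $g_0$. Writing $g_0 = \psi(\lmg)v$ with $\norm{v}{Y}\le 1$, I need to bound $\norm{r_{1/n}(\Ck)\,\psi(\lmg)}{}$. In the native case $\Ck = Q_k\lmg Q_k$, I would split $g_0 = Q_k g_0 + (I-Q_k)g_0$. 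On the range of $Q_k$ the operators $\Ck$ and $\lmg$ agree, so there the bias contribution is controlled by the scalar bound $\sup_{t>0} r_{h_n}(t)\psi(t) \lesssim \psi(1/n)$, which is the classical qualification/source-condition estimate and holds because $\psi$ is an index function (monotone); care is needed here because a general index function need not give $r_{h_n}(t)\psi(t)\lesssim\psi(h_n)$ without a qualification argument — the standard trick is to bound $r_{h_n}(t)\psi(t)$ by $\psi(h_n)$ for $t\le h_n$ and by $\psi(t) \cdot (h_n/t)\le \psi(t)$ otherwise is not quite enough, so one instead uses that $r_{h_n}(t)\psi(t) \le \psi(h_n) + r_{h_n}(t)\,\big(\psi(t)-\psi(h_n)\big)_+$ and the latter is handled by monotonicity of $t\mapsto \psi(t)/t$-type bounds, or simply by the observation that here $\psi$ is the specific operator-concave (hence subadditive-like) function appearing in the hypotheses. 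On the complement $(I-Q_k)g_0$ the filter $r_{h_n}(\Ck)$ acts as the identity (since $\Ck$ vanishes there), so the contribution is exactly $\norm{(I-Q_k)\psi(\lmg)v}{}^2 \le \psi^2(s_{k+1}(\lmg))$ because $Q_k$ is the singular projection of $\lmg$ onto its top $k$ singular directions and $\psi$ is nondecreasing. Adding the three bounds $\psi^2(1/n)$, $\psi^2(s_{k+1}(\lmg))$, $k/n$ (each with an absolute constant) yields~\eqref{eq:spc-bound} with some $\ca\ge 2$.

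The remaining point is the inherited non-commuting case~\eqref{it:case2-prop}, and this is where I expect the real obstacle. Here $\Ck = \opT P_k\lmf P_k\opT^\ast$ is \emph{not} the singular projection of $\lmg = \opT\lmf\opT^\ast$, so the clean orthogonal split above is unavailable. The strategy is to use Assumption~\ref{ass:prior-linked2scale-noncomm}, $\norm{(\lmf)^{1/2}f}{}\asymp\norm{H^a f}{}$ with $H = \opT^\ast\opT$, together with $\norm{\opT f}{Y} = \norm{H^{1/2}f}{X}$, to compare $\Ck$ with a function of $\lmg$ up to two-sided constants, and then to invoke operator monotonicity/concavity of $\psi$ (this is precisely why $\psi$ is assumed operator concave, so that $\psi$ of an operator inequality is again an operator inequality, via the Löwner theory referenced in~\cite{MR1477662}) and Douglas' range inclusion theorem to transfer the source representation $g_0\in\lpsi$ into a usable source representation with respect to $\Ck$ (or its spectral truncation). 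Concretely I would show $P_k\lmf P_k \preceq \lmf$ hence $\Ck \preceq \lmg$, control the "missing mass" $\lmg - \Ck$ by $\psi^2(s_{k+1}(\lmg))$-type quantities using the link condition to relate the $k$-th singular value of the relevant operators, and then run the same three-term filter estimate. The delicate bookkeeping — making sure the constants from Assumption~\ref{ass:prior-linked2scale-noncomm}, from operator concavity, and from Douglas' theorem all collapse into a single $\ca$ independent of $k$ and $n$, and that the non-commutativity does not spoil the identification of the tail bound with $\psi^2(s_{k+1}(\lmg))$ rather than $\psi^2(s_{k+1}(\Ck))$ — is the crux of the argument.
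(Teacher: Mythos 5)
Your treatment of the posterior spread, the estimation variance, and the bias in the \emph{native} case is sound and essentially coincides with the paper's argument: rank-$k$ plus norm-boundedness gives the two $k/n$ terms, and the bias splits along $Q_k$ into a Tikhonov-qualification term $\psi(1/n)$ (with the saturation caveat absorbed into $k/n$, exactly as the paper does via~\eqref{eq:spc-bound-2terms}) and a tail term $\norm{(I-Q_k)\psi(\lmg)}{}=\psi(s_{k+1}(\lmg))$.

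The genuine gap is case~\ref{it:case2-prop}, which you explicitly defer as ``delicate bookkeeping'' and ``the crux of the argument'' without carrying it out. Three concrete steps are missing, and one of your proposed substitutes would not work. First, the correct mechanism is \emph{not} to transfer the source representation of $g_0$ to a representation with respect to $\Ck$ via Douglas' theorem: $\Ck$ has rank $k$, so its range is $k$-dimensional and cannot carry a source-wise representation of a general $g_0\in\lpsi$. Instead one keeps $g_0=\psi(\lmg)v$ and writes $r_{1/n}(\Ck)\psi(\lmg)=r_{1/n}(\Ck)\psi(\Ck)+r_{1/n}(\Ck)\bigl(\psi(\lmg)-\psi(\Ck)\bigr)$; the second term is where the work lies. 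Second, the operator concavity of $\psi$ must be invoked in the specific form $\norm{\psi(\lmg)-\psi(\Ck)}{Y\to Y}\leq\psi\lr{\norm{\lmg-\Ck}{Y\to Y}}$ (this is~\cite[Thm.~X.1.1]{MR1477662}); you mention operator concavity but never state or use this inequality. Third, one needs the algebraic identity
\begin{equation*}
\lmg-\Ck=\opT\lr{\lmf}^{1/2}(I-P_k)\lr{\lmf}^{1/2}\opT^{\ast},
\qquad\text{hence}\qquad
\norm{\lmg-\Ck}{Y\to Y}=\norm{\opT\lr{\lmf}^{1/2}(I-P_k)}{X\to Y}^2,
\end{equation*}
which uses that $P_k$ is the singular projection of $\lmf$, followed by the chain of estimates (Weyl's monotonicity applied to Assumption~\ref{ass:prior-linked2scale-noncomm}, then Heinz' inequality with $\theta=1/(2a)$) showing $\norm{\opT\lr{\lmf}^{1/2}(I-P_k)}{}^2\asymp s_{k+1}^{2a+1}(H)\asymp s_{k+1}(\lmg)$. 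Without these steps the tail bound $\psi^2(s_{k+1}(\lmg))$ in the non-commuting case is unproven, and this is precisely the part of the proposition that goes beyond the standard conjugate computation.
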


\subsection{Optimized SPC bound}
\label{sec:optimized-spc}
We aim at optimizing the general
bound~\eqref{eq:spc-bound}. This bound is constituted of two $k$-dependent terms, and a summand
$$
  \max\set{\psi^{2}\lr{\frac 1 {n}}, \frac 1 {n^{2}}},  
$$
which is independent of the truncation level~$k$.
As can be seen in the proof of Proposition~\ref{prop:error-direct-unified}, namely~\eqref{eq:spc-bound-2terms}, this summand is the result of bounding the \emph{regularization
   bias}, inherent in Bayesian problems with (untruncated) Gaussian
 priors. Hence the best (provable, by bounding the $\spc$ as above)
 contraction rate, will be bounded below (in order) by this
 regularization bias.

To better understand the nature of the $k$-dependent terms in the bound~\eqref{eq:spc-bound} we recall  the following result from statistical inference. The minimax risk over the class~$\lpsi$  is given as
$$
R(n):= \inf_{\hat g}\sup_{g\in \lpsi} \mathbb E\norm{g - \hat g}{}^2,
$$
where the infimum runs over all estimators using data~$Y^n$.
Similarly, let
$$
R_T(n):= \inf_{\hat g_T}\sup_{g\in \lpsi} \mathbb E\norm{g - \hat g_T}{}^2,
$$
where the infimum is taken over all (linear) truncated series estimators.
Since the class $\lpsi$ constitutes an ellipsoid, the following result holds.
\begin{prop}[{\cite[Prop.~8]{MR1062717}}]\label{prop:donohoetal}
We have that
$$
R_T(n) \leq 2.22 R(n).
$$
In particular
$$
R_T(n) = \inf_{k} \lr{\psi^{2}\lr{s_{k+1}(\lmg)} +k/n} \leq 2.22 R(n). 
$$
\end{prop}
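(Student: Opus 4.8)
The plan is to recognize the class~$\lpsi$ as an ellipsoid in the eigenbasis of~$\lmg$, to evaluate $R_T(n)$ explicitly for such an ellipsoid under white noise, and then to invoke the ellipsoid minimax comparison of Donoho, Liu and MacGibbon. Both the native case~\ref{it:case1-prop} and the inherited case~\ref{it:case2-prop} (where $\lmg=\opT\lmf\opT^{\ast}$) enter the statement only through the operator~$\lmg$ and the observation model~\eqref{eq:noise-model}, so the two cases are treated simultaneously.

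First I would fix the eigenbasis $y_{1},y_{2},\dots$ of $\lmg$ with non-increasing eigenvalues $s_{j}:=s_{j}(\lmg)$. By Definition~\ref{def:gen-source} together with Douglas' Range Inclusion Theorem (quoted after that definition), membership $g\in\lpsi$ is equivalent to $\sum_{j\geq 1}\abs{\scalar{g}{y_{j}}}^{2}/\psi^{2}(s_{j})\leq 1$. Hence $\lpsi$ is an ellipsoid with semi-axes $a_{j}:=\psi(s_{j})$, and the $a_{j}$ are non-increasing because $\psi$ is an index function (hence non-decreasing) and the $s_{j}$ are non-increasing.

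Second I would evaluate $R_T(n)$. In the coordinates of $\set{y_{j}}$ the model~\eqref{eq:noise-model} reads $\scalar{\yn}{y_{j}}=\scalar{g}{y_{j}}+n^{-1/2}\xi_{j}$ with $\xi_{j}$ i.i.d.\ standard normal, so each coordinate carries noise variance $1/n$. A truncated series estimator at level~$k$ is $\hat g_{T,k}=\sum_{j=1}^{k}\scalar{\yn}{y_{j}}y_{j}$; over $\lpsi$ its risk splits into the variance part $k/n$ and the tail bias $\sup\set{\sum_{j>k}\abs{\scalar{g}{y_{j}}}^{2}:\,g\in\lpsi}$. Since the semi-axes are non-increasing, this supremum is attained by concentrating all the $\ell^{2}$-mass on coordinate $k+1$, giving $a_{k+1}^{2}=\psi^{2}(s_{k+1})$; the same monotonicity shows that a prefix $\set{1,\dots,k}$ is optimal among all finite index sets. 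Therefore $R_T(n)=\inf_{k}\lr{\psi^{2}(s_{k+1}(\lmg))+k/n}$, which is the stated formula and matches exactly the $k$-dependent terms in~\eqref{eq:spc-bound}.

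Third and finally, I would invoke~\cite[Prop.~8]{MR1062717}: for white-noise observation of a signal constrained to an ellipsoid, the minimax risk among truncated (projection) estimators exceeds the full minimax risk $R(n)$ by at most the universal factor $2.22$, a consequence of passing to the hardest hyperrectangle inscribed in the ellipsoid and of the comparison of linear and nonlinear minimax risks over hyperrectangles. Combining this with the previous display yields $R_T(n)=\inf_{k}\lr{\psi^{2}(s_{k+1}(\lmg))+k/n}\leq 2.22\,R(n)$. The statement is thus essentially a citation; the only point requiring care in the write-up is the bookkeeping of the noise normalization ($1/\sqrt n$ in~\eqref{eq:noise-model}, hence variance $1/n$ per coordinate) so that the problem matches precisely the normalization of the cited reference, and the verification that a coordinate prefix is optimal among truncated estimators, which is where the monotonicity of the semi-axes $a_{j}=\psi(s_{j})$ is used.
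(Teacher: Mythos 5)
Your proposal is correct and follows the same route as the paper, which states this result purely as a citation of Donoho, Liu and MacGibbon after observing that $\lpsi$ is an ellipsoid; you merely make explicit the diagonalization of $\lpsi$ in the eigenbasis of $\lmg$ (spectral calculus suffices here, Douglas' theorem is not needed) and the standard bias--variance computation giving $R_T(n)=\inf_{k}\lr{\psi^{2}(s_{k+1}(\lmg))+k/n}$, both of which are sound. Nothing further is required.
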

We are ready to optimize the bound established in
Proposition~\ref{prop:error-direct-unified},
while Proposition \ref{prop:donohoetal} will enable the comparison of our optimized bound to the minimax rate.
\begin{thm}\label{thm:spc-bound}
Consider the white noise model~(\ref{eq:noise-model}) with a Gaussian prior~$\mathcal N(0,\Ck)$, and with underlying truth~$g_0\in\lpsi$ as in~(\ref{eq:lpsi}), where either 
\begin{enumerate}
    \item\label{it:case1} $\Ck=Q_k\lmg Q_k$ (native
    ), or   
    \item\label{it:case2} $\Ck=\opT P_k\lmf P_k\opT^\ast$ (inherited with non-commuting $\lmf, \opT^\ast\opT$, linked via Assumption
\ref{ass:prior-linked2scale-noncomm} for $H=\opT^\ast\opT$), 
\end{enumerate}
and where in the latter case we let~$\lmg=\opT \lmf \opT^\ast$, and we
assume that the function~$\psi$ is operator
concave.  We assign $k=\kn$ as in~(\ref{eq:kn-def}).
Then, for the constant~$\ca$ from Proposition~\ref{prop:error-direct-unified}, we have
\begin{equation}\label{eq:spc-bound-thm}
  \spc^{\lpsi}(\kn) \leq 4\ca \max\set{\psi^{2}\lr{\frac 1 n},\frac {\kn} n}.
\end{equation}
If the regularization bias in~\eqref{eq:spc-bound-thm} is of lower order, then the obtained contraction rate over the class $\lmg_\psi$ is order optimal.
\end{thm}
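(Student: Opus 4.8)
The plan is to substitute the calibrated truncation level $\kn$ from~\eqref{eq:kn-def} into the generic bound of Proposition~\ref{prop:error-direct-unified}, and then to exploit the monotonicity of the singular numbers $s_{j}(\lmg)$ to compare the resulting bound with the minimax risk via Proposition~\ref{prop:donohoetal}. First I would record two elementary facts about the index set defining $\kn$ in~\eqref{eq:kn-def}: it contains $j=1$ for all large $n$, since $s_{1}(\lmg)>0$ is fixed while $\max\set{\psi^{2}(1/n),1/n}\to0$ and $\psi$ is non-decreasing; and it is finite, since $s_{j}(\lmg)\to0$. Hence $1\leq\kn<\infty$, the level $\kn$ itself satisfies $\psi^{2}(s_{\kn}(\lmg))>\max\set{\psi^{2}(1/n),\kn/n}$, and the level $\kn+1$ violates the defining inequality, so that
\[
\psi^{2}\lr{s_{\kn+1}(\lmg)}\leq\max\set{\psi^{2}\lr{\tfrac1n},\,\tfrac{\kn+1}{n}}\leq2\max\set{\psi^{2}\lr{\tfrac1n},\,\tfrac{\kn}{n}},
\]
where the last step uses $\kn\geq1$. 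Substituting $k=\kn$ into~\eqref{eq:spc-bound} and majorising each of the three summands by $\max\set{\psi^{2}(1/n),\kn/n}$ --- the first and third trivially, the middle one by the display above --- yields $\spc^{\lpsi}(\kn)\leq\ca(1+2+1)\max\set{\psi^{2}(1/n),\kn/n}$, which is~\eqref{eq:spc-bound-thm}. This step is essentially bookkeeping; the only points requiring care are the reduction to $\kn\geq1$ and tracking the constant~$4$.

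For the optimality claim, assume the regularization bias $\psi^{2}(1/n)$ is of lower order, so that the right-hand side of~\eqref{eq:spc-bound-thm} is $\asymp\kn/n$. The crux is the matching lower bound $\kn/n\leq\inf_{k}\lr{\psi^{2}(s_{k+1}(\lmg))+k/n}=R_{T}(n)$, which I would prove by splitting on $k$: if $k\geq\kn$ then the bracket is at least $k/n\geq\kn/n$; if $k\leq\kn-1$ then $k+1\leq\kn$, so monotonicity of $j\mapsto s_{j}(\lmg)$ and of $\psi$, together with $\psi^{2}(s_{\kn}(\lmg))>\kn/n$ from the first paragraph, give $\psi^{2}(s_{k+1}(\lmg))\geq\psi^{2}(s_{\kn}(\lmg))>\kn/n$. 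By Proposition~\ref{prop:donohoetal} this gives $\kn/n\leq R_{T}(n)\leq2.22\,R(n)$, hence $\spc^{\lpsi}(\kn)\lesssim R(n)$. Conversely, since the inner expectation in~\eqref{eq:spc-def} decomposes into the squared distance of $g_{0}$ to the posterior mean $\hat g_{\kn}$ plus the posterior spread, one has $\spc(\kn,g_{0})\geq\E^{g_{0}}\norm{g_{0}-\hat g_{\kn}}{Y}^{2}$; as $\hat g_{\kn}$ is an estimator based on $Y^{n}$, taking the supremum over $g_{0}\in\lpsi$ gives $\spc^{\lpsi}(\kn)\geq R(n)$. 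Therefore $\spc^{\lpsi}(\kn)\asymp R(n)$: the posterior mean is rate optimal and, since $\spc$ is the square of a contraction rate, the posterior contracts around the truth at the minimax rate $\sqrt{R(n)}$ over $\lpsi$.

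The main obstacle --- modest as it is --- is the conceptual point that the threshold in~\eqref{eq:kn-def} is engineered precisely so that the \emph{same} level $\kn$ simultaneously controls the bias-type term $\psi^{2}(s_{\kn+1}(\lmg))$ in~\eqref{eq:spc-bound} from above and the variance-type term $k/n$ in the oracle quantity $\inf_{k}(\psi^{2}(s_{k+1}(\lmg))+k/n)$ from below; once this is recognised, the argument is a short chase through the monotonicity of the singular numbers, and no estimate beyond Propositions~\ref{prop:error-direct-unified} and~\ref{prop:donohoetal} is required.
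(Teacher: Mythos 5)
Your proposal is correct and follows essentially the same route as the paper: plug $\kn$ into Proposition~\ref{prop:error-direct-unified}, use that the defining inequality in~\eqref{eq:kn-def} holds at $\kn$ and fails at $\kn+1$ to absorb all three terms into $4\ca\max\set{\psi^2(1/n),\kn/n}$, and establish optimality via $\kn/n\leq\inf_k\lr{\psi^2(s_{k+1}(\lmg))+k/n}$ combined with Proposition~\ref{prop:donohoetal}. Your bookkeeping (bounding each summand by a multiple of the max, using $\kn\geq1$) replaces the paper's two-case distinction but yields the same constant, and the added converse bound $\spc^{\lpsi}(\kn)\geq R(n)$ is a correct, if unneeded, extra.
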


\begin{rem}\label{rem:kn-infinite}
    We emphasize that necessarily~$\kn\to\infty$ as~$n\to\infty$, because otherwise, if~$\kn < K<\infty$, then, from~\eqref{eq:kn-def} we find that
    $$
    \psi^2(s_{K}(\lmg))\leq \psi^2(s_{\kn +1}(\lmg)) \leq
    \max\set{\psi^2(1/n), \frac{\kn +1}{n}}\leq \max\set{\psi^2(1/n), \frac{K}{n}}\to 0,
    $$
    hence by the properties of index functions we have~$s_{K}(\lmg)=0$,  which is a contradiction.
\end{rem}

\begin{rem}\label{rem:truncation-dominating}
  The case that~$\kn/n < \psi^2(1/n)$ corresponds to the situation when the regularization bias dominates the overall~$\spc$. In this case the truncation level is obtained from the relation~$\kn = \max\set{j, \quad s_{j}(\lmg) > 1/n }$, and this may be significantly smaller than the truncation level obtained in the case that the regularization bias is dominated. 
\end{rem}

It is thus interesting to characterize those cases when the regularization bias
in~\eqref{eq:spc-bound-thm} is of lower order. We shall provide a characterization; but for this we need an additional assumption.
\begin{ass}[control of decay of singular numbers]\label{ass:decay-rate}
  There is a constant~$\cc~>~1$ with
\begin{equation}
  \label{eq:decay-rate}
\sup_{j\in\nat}\frac{s_{j}(\lmg)}{s_{j+1}(\lmg)} \leq \cc.
\end{equation}
\end{ass}
This assumption does not hold for operators $\lmg$ with singular values decaying faster than exponentially.
  \begin{prop}\label{prop:alphalessbeta}
    Under Assumption~\ref{ass:decay-rate}, the term~$\kn/n$ in~\eqref{eq:spc-bound-thm} dominates the overall bound for the $\spc$ if and only if there is a constant~$1 \leq\cd<\infty$ such that
  \begin{equation}
    \label{eq:alphabeta}
  \psi^{2}\lr{s_{j}(\lmg)} \leq \cd j s_{j}(\lmg),\quad j\in\nat.   
  \end{equation}
\end{prop}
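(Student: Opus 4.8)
The plan is to recall from Theorem~\ref{thm:spc-bound} (and the defining relation~\eqref{eq:kn-def}) that the term $\kn/n$ dominates the SPC bound precisely when $\psi^2(1/n)\leq \kn/n$, and to translate this asymptotic comparison into the pointwise inequality~\eqref{eq:alphabeta}. The key observation is that by the very definition of $\kn$ in~\eqref{eq:kn-def}, at the index $j=\kn$ we have $\psi^2(s_{\kn}(\lmg)) > \kn/n$, while at $j=\kn+1$ we have $\psi^2(s_{\kn+1}(\lmg)) \leq \max\{\psi^2(1/n),(\kn+1)/n\}$. Under Assumption~\ref{ass:decay-rate}, the ratio $s_{\kn}(\lmg)/s_{\kn+1}(\lmg)$ is bounded by $\cc>1$, and since $\psi$ is an index function (hence non-decreasing), the values $\psi^2(s_{\kn}(\lmg))$ and $\psi^2(s_{\kn+1}(\lmg))$ are comparable only up to a factor that, a priori, need not be bounded unless we also exploit properties of $\psi$; I would therefore handle the two directions of the equivalence separately.

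\emph{Sufficiency of~\eqref{eq:alphabeta}.} Assume~\eqref{eq:alphabeta}. I would show that $\psi^2(1/n) \lesssim \kn/n$. Suppose for contradiction (for some subsequence of $n$) that $\kn/n < \psi^2(1/n)$; by Remark~\ref{rem:truncation-dominating} this means $\kn = \max\{j, \; s_j(\lmg) > 1/n\}$, so $s_{\kn}(\lmg) > 1/n \geq s_{\kn+1}(\lmg)$. Applying~\eqref{eq:alphabeta} at $j=\kn$ gives $\psi^2(s_{\kn}(\lmg)) \leq \cd \kn s_{\kn}(\lmg)$. Using Assumption~\ref{ass:decay-rate}, $s_{\kn}(\lmg)\leq \cc\, s_{\kn+1}(\lmg)\leq \cc/n$, and monotonicity of $\psi$ with $s_{\kn}(\lmg)\le \cc/n$ together with a doubling-type bound on $\psi^2$ near the origin (which follows from $\psi$ being an index function; one can use $\psi^2(\cc t)\le \cc'\psi^2(t)$ only if $\psi$ has such regularity — more robustly, I would argue directly that $\psi^2(1/n)\le \psi^2(s_{\kn}(\lmg))\le \cd\kn s_{\kn}(\lmg)\le \cd\cc\,\kn/n$, contradicting $\kn/n<\psi^2(1/n)$ up to the constant, which is exactly what we want since domination is in order). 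This shows $\psi^2(1/n)\lesssim \kn/n$, hence $\kn/n$ dominates.

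\emph{Necessity of~\eqref{eq:alphabeta}.} Assume $\kn/n$ dominates the SPC bound for all large $n$, i.e.\ $\psi^2(1/n)\lesssim \kn/n$. I want to deduce~\eqref{eq:alphabeta} for every $j\in\nat$. Fix $j$ and choose $n=n(j)$ so that $1/n \asymp s_{j}(\lmg)$ — concretely, take $n$ with $s_{j+1}(\lmg) \leq 1/n < s_j(\lmg)$, which is possible for all but finitely many $j$ (and the finitely many exceptional $j$ can be absorbed into the constant $\cd$). With this choice, $\kn = \max\{i,\; \psi^2(s_i(\lmg)) > \max\{\psi^2(1/n), i/n\}\}$; since $s_j(\lmg)>1/n$ one checks that $\kn \le j$ is impossible only if $\psi^2(s_j(\lmg))\le j/n$ — indeed if $\kn\ge j$ then $\psi^2(s_j(\lmg))>\max\{\psi^2(1/n),j/n\}\ge j/n$, but then $\kn/n \ge $ (something forcing the truncation term large), and combined with $\psi^2(1/n)\lesssim\kn/n$ and Assumption~\ref{ass:decay-rate} relating $s_j$ and $s_{\kn+1}$... — here I would instead argue cleanly: by the assumed domination at this $n$, $\kn/n\gtrsim\psi^2(1/n)$ and $\kn/n\gtrsim\psi^2(s_{\kn+1}(\lmg))$ (from~\eqref{eq:kn-def} at $i=\kn+1$ together with domination), so using Assumption~\ref{ass:decay-rate} to compare $s_j$ with $s_{\kn+1}$ when $j$ and $\kn$ are close, and directly comparing otherwise, yields $\psi^2(s_j(\lmg))\lesssim \kn/n \lesssim j/n \asymp j\, s_j(\lmg)$, which is~\eqref{eq:alphabeta}.

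\emph{Main obstacle.} The delicate point is controlling $\psi^2$ and the singular values at \emph{nearby} indices: passing between $s_j(\lmg)$ at the ``natural'' index $j$ and $s_{\kn(j)+1}(\lmg)$ at the truncation index requires knowing that $\kn(j)$ and $j$ are comparable (or that $s_j/s_{\kn+1}$ is bounded), which is exactly where Assumption~\ref{ass:decay-rate} enters, and it is the reason that assumption is imposed. A secondary subtlety is the behaviour of $\psi^2$ under scaling by the bounded constant $\cc$; since $\psi$ is merely an index function, I would avoid assuming a doubling condition on $\psi$ and instead route all comparisons through the defining inequalities~\eqref{eq:kn-def} so that only monotonicity of $\psi$ is used, with the constant $\cd$ depending on $\ca$, $\cc$, and the implied constants in the domination hypothesis.
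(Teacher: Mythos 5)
Your sufficiency direction is essentially the paper's own argument: the chain $\psi^2(1/n)\leq\psi^{2}\lr{s_{\kn}(\lmg)}\leq \cd\,\kn s_{\kn}(\lmg)\leq \cc\cd\,\kn s_{\kn+1}(\lmg)\leq \cc\cd\,\kn/n$, valid once $s_{\kn+1}(\lmg)\leq 1/n$, is exactly what the paper does, and the remaining case (where $\psi^2(1/n)\leq(\kn+1)/n$ already holds) is trivial. That half is fine.

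The necessity direction is where you genuinely depart from the paper, and it is where your argument has a real gap. The paper argues by contraposition: if no constant $\cd$ works, then $j s_{j}(\lmg)/\psi^{2}\lr{s_{j}(\lmg)}\to 0$, whence $n s_{\kn}(\lmg)\leq \cc\,(\kn+1)s_{\kn+1}(\lmg)/\psi^{2}\lr{s_{\kn+1}(\lmg)}\to 0$ (using $\kn\to\infty$ and Assumption~\ref{ass:decay-rate}), so eventually $s_{\kn}(\lmg)\leq 1/n$ and $\kn/n<\psi^{2}\lr{s_{\kn}(\lmg)}\leq\psi^2(1/n)$, i.e.\ the bias dominates. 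Your direct route (fix $j$, pick $n=n(j)$, read off~\eqref{eq:alphabeta}) is not closed as written. Two concrete problems: (i) an integer $n$ with $s_{j+1}(\lmg)\leq 1/n<s_{j}(\lmg)$ need not exist — the interval $[1/s_{j}(\lmg),\,1/s_{j+1}(\lmg))$ can contain no integer, and this can happen for infinitely many $j$, so these cases cannot be ``absorbed into the constant''; (ii) even granting the choice, the case $\kn=j$ is not handled: the definition~\eqref{eq:kn-def} gives only a \emph{lower} bound on $\psi^{2}\lr{s_{\kn}(\lmg)}$, and to get the needed upper bound you must compare $\psi^{2}\lr{s_{\kn}(\lmg)}$ with $\psi^{2}\lr{s_{\kn+1}(\lmg)}$ through the factor $\cc$ inside the argument of $\psi^2$ — precisely the doubling-type control on $\psi^2$ that you (correctly) say an index function need not satisfy and that you promise to avoid. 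The asserted chain $\psi^{2}\lr{s_j(\lmg)}\lesssim\kn/n\lesssim j/n$ is therefore not established; the text between the dashes is an acknowledgement of the problem, not a proof.

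If you want to keep the direct route, it can be repaired by approaching $s_j(\lmg)$ from the other side: take $n=\lfloor 1/s_{j}(\lmg)\rfloor$, so that $1/n\geq s_{j}(\lmg)$ and hence $\psi^{2}\lr{s_{j}(\lmg)}\leq\psi^2(1/n)\lesssim \kn/n$ by the domination hypothesis; moreover any $i$ in the defining set of~\eqref{eq:kn-def} satisfies $s_{i}(\lmg)>1/n\geq s_{j}(\lmg)$ and thus $i<j$, so $\kn<j$, and $1/n\leq 2 s_{j}(\lmg)$ for $j$ large, giving $\psi^{2}\lr{s_{j}(\lmg)}\lesssim j s_{j}(\lmg)$ with no doubling condition and, incidentally, no use of Assumption~\ref{ass:decay-rate} in this direction. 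Either adopt this fix or switch to the paper's contrapositive argument; as submitted, the ``only if'' half is not proved.
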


Specifically, Proposition~\ref{prop:alphalessbeta} 
applies for problems with covariance operator $\lmg$ of the underlying infinite dimensional prior on $g$, having a power type decay of the singular numbers. Thus, in such cases, under Assumption~\ref{ass:decay-rate} the truncation level~$\kn$ yields order optimal contraction exactly if~\eqref{eq:alphabeta} holds.
\begin{xmplno}[{$\alpha$-regular prior and Sobolev smoothness}]
  For a native~$\alpha$-regular prior defined in $Y$ (recall the example in~\S~\ref{sec:contraction}), with (untruncated) covariance operator~$\lmg$, the Sobolev type smoothness of the
  underlying truth~$g_{0}\in \eS {\beta}$ is expressed through the
  index function~$\psi(t)= t^{\beta/(1 + 2\alpha)},\ t>0$ (recall the example in~\S~\ref{sec:smoothness}). For this
  function we see that for~$\alpha \leq \beta$ it holds that
  $$
  \psi^{2}(s_{j}(\lmg)) \asymp j^{-2\beta} \,\lesssim\,  j \cdot j^{-(1 + 2 \alpha)},
  $$
  such that both, Assumption \ref{ass:decay-rate} and~condition~(\ref{eq:alphabeta}) hold, and Proposition~\ref{prop:alphalessbeta} applies.

  The truncation level~$\kn$ is then given from balancing~$\frac k n
  \asymp k^{-2\beta}$, which results in~$\kn \asymp n^{1/(1 +
    2\beta)}$, yielding a bound for the~$\spc$ of the form
  $$
  \spc^{\lpsi}(\kn) \,\lesssim\, \frac{\kn}{n} \asymp n^{- \frac{2\beta}{1 + 2\beta}},
  $$
  which is known to be minimax for direct estimation.
  The same bounds are valid also for inherited $\alpha$-regular priors with commuting operators $\lmf, \opT^\ast\opT$. In the non-commuting case, provided $H=\opT^\ast\opT$ and $\lmf$ satisfy Assumption \ref{ass:prior-linked2scale-noncomm}, the same bounds hold for $\alpha\leq\beta\leq1+2\alpha$, where the additional restriction on $\beta$ is needed in order to ensure that $\psi$ is operator concave.
\end{xmplno}

\subsection{Interlude}
\label{sec:interlude}

Frequentist convergence rates of the posterior distribution under Gaussian priors in the Gaussian white noise
model, have been considered for example in~\cite{MR1790008} (rates for the posterior mean under Sobolev-type smoothness),~\cite{MR1983541} (contraction rates under Sobolev-type smoothness), and~\cite{MR2418663} (general contraction theory). We gave a
detailed discussion here on the one hand because, as explained in Section~\ref{sec:outline}, we are interested in general smoothness assumptions, and on the other hand because we want to emphasize the specifics when using
truncated (Gaussian) priors.


Theorem~\ref{thm:spc-bound} highlights the general nature of our bounds
for the squared posterior contraction ($\spc$), in terms of both the considered prior covariances, and the smoothness of the truth, expressed using source sets. 
In our analysis we distinguish two cases: case~\ref{it:case1}, which uses native priors, and which is entirely based on the singular value decomposition of the underlying covariance operator, and case~\ref{it:case2}, which refers to priors inherited from external native priors using some linear mapping, and which is such that the inherited finite dimensional prior is no longer supported in a singular subspace of the covariance operator of the underlying infinite dimensional inherited prior. The latter case, which is relevant when studying the direct problem \eqref{eq:direct} associated to the inverse problem~\eqref{eq:Ys} can be treated provided that the linear mapping is appropriately linked to the external native prior's covariance. In particular this link, captured in Assumption~\ref{ass:prior-linked2scale-noncomm}, imposes a minimum smoothness on the external native prior.

Special
emphasis is put on the description of the optimal truncation
level~$\kn$, made explicit in~(\ref{eq:kn-def}). It is seen that in
general this
level will depend on the underlying smoothness as well as on
the noise level~$\frac 1 {\sqrt n}$,  and that, in the case that the regularization bias is dominated, it is the
same as the truncation level in (minimax) statistical estimation under white
noise when using truncated series estimators, as expressed in
Proposition~\ref{prop:donohoetal}. Furthermore, the obtained upper
bound on the contraction rate involves a truncation-independent term,
the regularization bias, and thus in Proposition~\ref{prop:alphalessbeta} we give a characterization to determine
whether this term will be of lower order compared to the $k$-dependent
terms, or it will be dominating. In the former case the obtained rates of contraction are minimax, while in the latter case they are suboptimal\footnote{We stress that here suboptimality refers to the \emph{obtained} rates, which are upper bounds for the rate of contraction. Lower bounds can in principle be obtained using the theory developed in \cite{IC08}, which is based on the concentration function of the Gaussian prior rather than bounding the SPC. Although very interesting, we presently do not pursue this direction.}.  As already mentioned in Remark~\ref{rem:truncation-dominating}, the truncation level according to~\eqref{eq:kn-def} will be smaller for dominating regularization bias than for the case when the regularization bias is dominated.

We close this discussion with the following observation. In studies
dealing with scaled infinite-dimensional Gaussian priors a typical
'saturation effect' is observed: In order to achieve minimax-optimal rates of contraction the prior smoothness must not be much
lower than the regularity of the underlying truth, see~\cite{MR2906881} and~\cite{MR3044507}. The contrary is
true for truncated priors: when applying
Proposition~\ref{prop:alphalessbeta} in specific examples later on, it
will be transparent that the prior regularity must be lower than the
regularity of the underlying truth; see the preceding example as well. This has also been observed in~\cite{MR2418663}, and it can be explained by the fact that truncation of a Gaussian prior increases
its regularity, which can correct for an under-smoothing but not for an over-smoothing prior.
In the case that the truncation of the Gaussian prior is not along some singular subspace, a limitation for the considered smoothness occurs due to the nature of the linking Assumption~\ref{ass:prior-linked2scale-noncomm}. This can be seen from the final example in~\S~\ref{sec:optimized-spc}.\


\section{Modulus of continuity for inverse problems}
\label{sec:inverseP}
We next consider the linear mapping~$\opA\colon X\to Y$ from~\eqref{eq:Ys} and shall introduce the modulus of continuity for controlling its inversion on a subset $S$ (often called \emph{conditional stability}). We
shall do this for $S:= \Sk$, where~$\Sk\subset X$ is a $k$-dimensional
subspace. 
We derive bounds on the modulus of continuity which are known to be sharp in many cases.  

\subsection{Modulus of continuity}
\label{sec:modulus}

Similarly to the recent study~\cite{MR3757524},  but restricting to
normed spaces,  we proceed as follows.
Given the operator $\opA$, for a class~$S \subset X$  and a fixed element $f_0\in X$, we let
\begin{equation}
  \label{eq:mod-k-1}
 \omf(\opA^{-1},S,\delta):= \sup\set{\norm{f - f_{0}}{X},\ f\in S,\
    \norm{\opA(f -f_{0})}{Y}\leq \delta},\ \delta>0,
\end{equation}
be the modulus of continuity function.
We stress that this modulus function controls the deviation around the
 element~$f_{0}$, and hence it is local.

Recall the self-adjoint companion of $\opA$ introduced in §~\ref{sec:relating-ops}, $H=\asta$. It is evident that
\begin{align}
    \omf(\opA^{-1},S,\delta) &=  \omf(H^{-1/2}, S,\delta ),\quad \delta>0,\label{eq:omega-A-H}
  \end{align}
hence we shall confine the subsequent analysis to the operator~$H$.

\subsection{Bounding the modulus of continuity}
\label{sec:main-bound}

When bounding the modulus of continuity for the inversion of an operator around an element $f_0\in X$, it is convenient to express the smoothness of $f_0$ \emph{relative to that particular operator}. Precisely, in the context of the inverse problem~(\ref{eq:Ys}), we
shall measure the smoothness \emph{relative to the operator~$H$}, the companion of~$\opA$, and we shall assume that~$f_{0}\in\af$ for some index function~$\varphi$, see~§~\ref{sec:smoothness} where source conditions were introduced.


The control of the modulus of continuity is based on several
assumptions, relating a finite dimensional subspace~$\Sk\subset X$ to
the operator~$H$ as well as to the target function~$f_{0}$. We denote by $P_k$ the orthogonal projection of $X$ onto the subspace $\Sk$. Furthermore, recall that we denote by~$s_{k}:= s_{k}(H)$, the $k$-th
singular number of the (compact) operator~$H$.

\begin{de}
  [degree of approximation]\label{de:degree-approximation}
Let~$\opK\colon X\to Y$ be a (compact) operator.
Given a finite dimensional subspace~$\Sk\subset X$ we assign
$$
\varrho(\opK,\Sk):= \norm{\opK(I - P_{k})}{X\to Y}
$$
the degree of approximation of the subspace~$\Sk$ for the
operator~$\opK$. 
\end{de}

\begin{de}
  [modulus of injectivity]\label{de:modulus-injectivity}
Let~$\opK\colon X\to Y$ be a (compact) operator.
Given a finite dimensional subspace~$\Sk\subset X$ we assign
$$
j(\opK,\Sk) := \inf_{h\in
  \Sk\setminus\{0\}}\frac{\norm{\opK h }{Y}}{\norm{h}{X}},
$$
the modulus of injectivity, which quantifies the invertibility of the operator $\opK$ on the subspace $K(\Sk)$.
\end{de}
We mention here, that the last two concepts are interesting for sequences of increasing subspaces $\Sk$. 
Taking $K=H^{1/2}:X\to X$, the quantities $\varrho(H^{1/2},\Sk)$ and~$j(H^{1/2},\Sk)$ shall allow us to quantify the impact of the choice $S=\Sk$, when bounding the modulus of continuity $\omf(H^{-1/2}, S,\delta )$.
    
\begin{rem}   
    The above~$\varrho(H^{1/2},\Sk)$ relates to the \emph{$k$-th
      Kolmogorov number}, 
    while~$j(H^{1/2},\Sk)$ relates to the \emph{$k$-th Bernstein
      number}, both of which are well studied quantities in approximation theory, see~\cite{MR774404}. 
  \end{rem}

 When $\Sk$ is the $k$-th singular subspace of $H$, then it can be seen that $\varrho(H^{1/2},\Sk)=s_{k+1}^{1/2}$, $j(H^{1/2},\Sk)=s_{k}^{1/2}$ and $\norm{(I-P_{k})\varphi(H)}{}=~ \varphi(s_{k+1}),$ for any index function $\varphi$. When using a subspace~$\Sk$ other than the $k$-th singular
  space, then its quality with respect to the $k$-th singular subspace
  is measured in terms of Jackson and Bernstein inequalities which
  look as follows. 
  \begin{ass}
    [{relating~$\Sk$ to the $k$-th singular
      subspace of $H^{1/2}$}]\label{ass:relations}
Consider a sequence $(\Sk)_{k\in\mathbb N}$ of subspaces of $X$. There are constants~$M, C_{P},C_{B}\ge 1$ such that for $k\in\nat$ we have
  \begin{align}
    \varrho(H^{1/2},\Sk) & \leq C_{P} s_{k+1}^{1/2},&\qquad  \text{(Jackson Inequality)}\\
j(H^{1/2},\Sk) & \geq \frac 1 {C_{B}}s_{k}^{1/2},&\qquad \text{(Bernstein
                                                     Inequality)}
    \intertext{and for~$f_{0}\in\af$ we have that } 
    \norm{(I - P_{k})f_{0}}{X} &\leq M \varphi(s_{k+1}).& \qquad
    \text{(approximation power)}
  \end{align}
  \end{ass}
  \begin{rem}
Within the context of
projection schemes in classical ill-posed problems such assumptions were made in the study~\cite{MR2394505}. For finite element approximations, i.e.,\ when the spaces~$\Sk$ consist of finite elements, a detailed example is given in~\cite[Ex.~2.4]{MR2367863}.
In the context of Bayesian methods the recent study~\cite{MR4116718} also makes similar assumptions, see ibid. Ass.~2.3.
  \end{rem} 

  Under Assumption~\ref{ass:relations} the following bound holds.
  \begin{prop}\label{prop:main-bound}
  Let $f_0\in \af$ and let the sequence $(\Sk)_{k\in\mathbb N}$ of subspaces of $X$ satisfy Assumption~\ref{ass:relations}. Then for all $k\in\mathbb N$, we have that
$$
\omega_{f_{0}}(H^{-1/2},\Sk,\delta) \leq M\lr{1 +C_{P}C_{B} 
}\varphi(s_{k+1}) + C_{B}
\frac{\delta}{\sqrt{s_{k}}}.
$$
\end{prop}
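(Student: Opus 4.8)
The plan is to split an arbitrary competitor $f\in\Sk$ with $\norm{\opA(f-f_0)}{Y}=\norm{H^{1/2}(f-f_0)}{X}\le\delta$ into a part measured by the approximation power of $\Sk$ near $f_0$, and a part controlled by the modulus of injectivity of $H^{1/2}$ on $\Sk$. Writing $P_k$ for the orthogonal projection onto $\Sk$, the natural decomposition is
\[
  \norm{f - f_0}{X} \le \norm{f - P_k f_0}{X} + \norm{(I-P_k)f_0}{X}.
\]
The second summand is immediately bounded by $M\varphi(s_{k+1})$ using the approximation power inequality in Assumption~\ref{ass:relations}, since $f_0\in\af$. For the first summand, note that $f - P_k f_0\in\Sk$, so by the Bernstein inequality (modulus of injectivity) we have
\[
  \norm{f - P_k f_0}{X} \le C_B \, s_k^{-1/2}\,\norm{H^{1/2}(f - P_k f_0)}{X}.
\]

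Now I would further split $\norm{H^{1/2}(f - P_k f_0)}{X} \le \norm{H^{1/2}(f - f_0)}{X} + \norm{H^{1/2}(I-P_k)f_0}{X}$. The first of these is at most $\delta$ by the constraint defining the modulus. For the second, $H^{1/2}(I-P_k)f_0 = H^{1/2}(I-P_k)f_0$ and $\norm{H^{1/2}(I-P_k)}{X\to X} = \varrho(H^{1/2},\Sk) \le C_P s_{k+1}^{1/2}$ by the Jackson inequality, so
\[
  \norm{H^{1/2}(I-P_k)f_0}{X} \le C_P s_{k+1}^{1/2}\,\norm{(I-P_k)f_0}{X} \le C_P s_{k+1}^{1/2}\, M\varphi(s_{k+1}).
\]
Combining, the first summand is bounded by $C_B s_k^{-1/2}\bigl(\delta + C_P M s_{k+1}^{1/2}\varphi(s_{k+1})\bigr)$. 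Since $s_{k+1}\le s_k$, the factor $s_{k+1}^{1/2}/s_k^{1/2}\le 1$, so this is at most $C_B\delta/\sqrt{s_k} + C_B C_P M\varphi(s_{k+1})$. Adding back the term $M\varphi(s_{k+1})$ from the $(I-P_k)f_0$ piece gives exactly
\[
  \omega_{f_0}(H^{-1/2},\Sk,\delta) \le M(1 + C_P C_B)\varphi(s_{k+1}) + C_B\frac{\delta}{\sqrt{s_k}},
\]
after taking the supremum over admissible $f$.

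The only genuinely delicate point is the estimate $\norm{H^{1/2}(I-P_k)}{X\to X}\le C_P s_{k+1}^{1/2}$: one must be careful that the Jackson inequality in Assumption~\ref{ass:relations} is phrased as a bound on $\varrho(H^{1/2},\Sk)=\norm{H^{1/2}(I-P_k)}{X\to X}$, which is precisely this operator norm, so there is no real obstacle here — it is definitional. (If instead one only had a bound on $\norm{(I-P_k)H^{1/2}}{}$ one would need self-adjointness of the relevant operator or an extra argument; here the stated assumption is already in the convenient form.) Everything else is the triangle inequality and monotonicity of $(s_j)$, so the main subtlety is purely bookkeeping of the constants. I would present the argument in the order above: decompose $f-f_0$, bound the non-projected part by approximation power, bound the projected part by the Bernstein inequality, insert $f_0$ into the numerator, and finally invoke the Jackson inequality and $s_{k+1}\le s_k$ to collapse the constants.
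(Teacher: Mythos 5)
Your proof is correct and follows essentially the same route as the paper's: the paper first proves the intermediate bound $\norm{h-f_0}{X}\leq\lr{1+\varrho(H^{1/2},\Sk)/j(H^{1/2},\Sk)}\norm{(I-P_k)f_0}{X}+\norm{H^{1/2}(h-f_0)}{X}/j(H^{1/2},\Sk)$ via exactly your decomposition $f-f_0=(f-P_kf_0)+(I-P_k)f_0$, Bernstein on the $\Sk$-part, and Jackson plus idempotency of $I-P_k$ on the $H^{1/2}(I-P_k)f_0$ term, and then substitutes the constants from Assumption~\ref{ass:relations} together with $s_{k+1}\leq s_k$. Your bookkeeping of the constants matches the paper's.
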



In the bound from Proposition~\ref{prop:main-bound} we have the 
flexibility of choosing the truncation level~$k\in\nat$, and we next study this choice. 
First, we recall
the following companion to the index function~$\varphi$  as
  \begin{equation}
    \label{eq:Theta}
    \Theta_{\varphi}(t) := \sqrt t \varphi(t),\quad t>0.
  \end{equation}
  Notice that $\Theta_\varphi$ is also an index function, more
  specifically, it is always strictly increasing hence invertible.

  Optimizing the bound from Proposition~\ref{prop:main-bound}
    with respect to the choice of the truncation level, we arrive at the main result of this section.
\begin{thm}
  \label{thm:phi-theta-bound}
  Suppose that~$f_{0}\in\af$, and that $(\Sk)_{k\in\mathbb N}$
 satisfies  Assumption~\ref{ass:relations}. Given~$\delta>0$ we   assign
 \begin{equation}
   \label{eq:nast}
   \nast := \max\set{j,\quad \Theta_{\varphi}(s_{j}) > \delta}.
 \end{equation}
  Then there is a constant~$\cg$ such that 
\begin{equation}\label{eq:modbound}
\omega_{f_{0}}(H^{-1/2},\Skd,\delta) \leq \cg \varphi\lr{\Theta_{\varphi}^{-1}(\delta)},
\end{equation}
for~$\delta>0$ small enough.
\end{thm}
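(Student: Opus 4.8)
The plan is to insert the optimizing truncation level $k=\nast$ into the bound of Proposition~\ref{prop:main-bound} and to verify that each of its two summands is of the order $\varphi(\Theta_\varphi^{-1}(\delta))$. First I would make sure the objects are well defined: for $\delta$ small enough, i.e.\ $0<\delta<\Theta_\varphi(s_1)$, the set $\set{j:\ \Theta_\varphi(s_j)>\delta}$ is nonempty, and it is finite because $s_j\to 0$ (as $H$ is compact) and $\Theta_\varphi$ is an index function; hence $\nast\ge 1$ is well defined, and $\delta$ belongs to the range of the strictly increasing, continuous map $\Theta_\varphi$, so $\tast:=\Theta_\varphi^{-1}(\delta)$ makes sense.

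The computational engine is the elementary identity $\varphi(\tast)=\delta/\sqrt{\tast}$, which is immediate from $\sqrt{\tast}\,\varphi(\tast)=\Theta_\varphi(\tast)=\delta$. Next I would extract from the definition~\eqref{eq:nast} of $\nast$ the two-sided information $\Theta_\varphi(s_{\nast+1})\le\delta<\Theta_\varphi(s_{\nast})$ (the left inequality because $\nast$ is the \emph{largest} index in the set, the right because $\nast$ is \emph{in} the set), and apply $\Theta_\varphi^{-1}$ to obtain $s_{\nast+1}\le\tast<s_{\nast}$. Applying Proposition~\ref{prop:main-bound} with $k=\nast$, the approximation (bias) term is bounded via monotonicity of $\varphi$, namely $\varphi(s_{\nast+1})\le\varphi(\tast)$, while the stability term is bounded using $s_{\nast}>\tast$ together with the identity, namely $\delta/\sqrt{s_{\nast}}<\delta/\sqrt{\tast}=\varphi(\tast)$. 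Summing the two gives~\eqref{eq:modbound} with the explicit constant $\cg=M(1+C_{P}C_{B})+C_{B}$, which depends only on the constants of Assumption~\ref{ass:relations} and not on $\delta$, so the estimate is uniform over all small $\delta$.

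The only point that needs care — and the one I would flag as the crux — is that the calibration must be carried out against $\tast=\Theta_\varphi^{-1}(\delta)$ directly, exploiting $s_{\nast+1}\le\tast$ for the bias term and $s_{\nast}>\tast$ for the stability term; the use of the companion $\Theta_\varphi$ in the threshold defining $\nast$ is exactly what makes these two contributions balance at the common order $\varphi(\Theta_\varphi^{-1}(\delta))$. Attempting instead to compare $s_{\nast}$ with $s_{\nast+1}$ directly would force an extra control-of-decay hypothesis on the singular numbers (in the spirit of Assumption~\ref{ass:decay-rate}), which this argument deliberately avoids; everything else is routine monotonicity bookkeeping.
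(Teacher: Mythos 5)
Your argument is correct and is essentially the paper's own proof: both insert $k=\nast$ into Proposition~\ref{prop:main-bound}, read off $s_{\nast+1}\le \tast < s_{\nast}$ with $\tast=\Theta_\varphi^{-1}(\delta)$ from the definition of $\nast$, and bound the bias term by monotonicity of $\varphi$ and the stability term via $\delta/\sqrt{\tast}=\varphi(\tast)$. Your explicit constant $M(1+C_PC_B)+C_B$ is even marginally sharper than the paper's $2\max\set{M(1+C_PC_B),C_B}$.
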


Some extensions to the above bounds on the modulus of continuity, can be found in Appendix \ref{app:A}.


\section{Relating the contraction rates for the direct and inverse
  problems}
\label{sec:relating}

In this section we discuss the steps for proving Theorem~\ref{thm:direct-inverse}, which is an application of Proposition~\ref{prop:ks}.
  
We shall first use Theorem~\ref{thm:spc-bound}
to establish contraction rates for the direct problem~\eqref{eq:direct}, finding rate sequences~$\delta_n,$ for truncation level $\kn$, $ n\in\nat$. 
 In order to apply Theorem~\ref{thm:spc-bound} we need to determine
  the inherited prior for the direct problem (formulated in $Y$), obtained by pushing forward the (truncated Gaussian) prior on~$f$ through the mapping~$\opA$ (formulated in $X$). Furthermore, given an element~$f_0\in X$ we need
  to express the smoothness of~$g_0= \opA f_{0}$ with respect to the
  corresponding (inherited, untruncated) covariance operator.
We address both of these tasks in §~\ref{sec:relating-problems} and derive rates $\delta_n$ for the direct problem, by relying on either Assumption~\ref{ass:link-noncomm} or~\ref{ass:prior-linked2scale-noncomm}, depending on whether the (untruncated) prior covariance on $f$ commutes with $H=\asta$ or not.

Given such a rate $\delta_n$, we can then use the results of Section \ref{sec:inverseP}, specifically Proposition \ref{prop:main-bound}, to compute the corresponding $\omega_{f_0}(H^{-\frac12},\Xkn,\delta_n)$, which according Proposition~\ref{prop:ks}, is a rate of contraction for the inverse problem at $f_0$. Here, $\lr{X_k}_{k\in\nat}$ are the singular spaces of the prior covariance operator~$\lmf$. 
 A main component of the proof will be the realization that $\kn$ as given in \eqref{eq:kn-def}, optimizes both the contraction rate $\delta_n$ as well as our bounds on the modulus of continuity; we shall see this in 
§~\ref{sec:optimality}. In the course, we shall establish that $\lr{X_k}_{k\in\nat}$ obey Assumption~\ref{ass:relations}. 
\subsection{Rates for the direct problem}
\label{sec:relating-problems}

Let us consider the model~\eqref{eq:Ys}, and put a Gaussian prior $\Pf=\mathcal{N}(0, P_k \lmf P_k)$ on $f\in X$, for a given self-adjoint, positive-definite and trace-class operator $\lmf:X\to X$. Here $P_{k}$ denotes the orthogonal projection onto the $k$-dimensional subspace $\Xk\subset X$ corresponding to the singular value decomposition of the operator $\lmf$. We are interested in finding contraction rates $\delta_n$ of~$\opA f$ 
around~$\opA f_{0}$, for a given $f_{0}\in X$. 

Due to linearity, the Gaussian prior on $f\in X$, induces a Gaussian
prior $\Pg$ on $\opA f\in Y$, which has zero mean and covariance
operator
\begin{equation}
  \label{eq:cov-push-farward}
  \Ck= \opA  P_{k} \lmf P_{k}\opA^{\ast}.
\end{equation}

Recall the terminology of native and inherited priors from Section \ref{sec:direct}. It is interesting to ask when this push-forward prior is native for~$g\in
Y$ and this is the case when the operators~$H$ and~$\lmf$
commute.
However, in general this will not be the case, that is, the push-forward
of~$\Pf$ will not be native in~$Y$.
Nevertheless, the $\spc$ was bounded in~(\ref{ali:mean-ck}) for both
native and non-native priors, respectively. See also Theorem \ref{thm:spc-bound}, which optimizes the bounds in both cases. 
\subsubsection{Commuting case: general smoothness}
\label{sec:commute}

The main observation is comprised as follows.
\begin{lem}
  \label{lem:commuting-H-lmf}
  If the operators~$H=\asta$ and~$\lmf$ commute, then the push-forward
  prior~$\opA_{\sharp \lr{\Pf}}$ is native for~$g$.

  Furthermore, under Assumption~\ref{ass:link-noncomm} (with index
  function~$\chi$) we see that
  \begin{enumerate}
  \item the covariance operator~$\lmg = \opA \lmf \opA^{\ast}$ has a representation
    \begin{equation}
    \label{eq:lmg}
   \lmg= \Theta_{\chi}^{2}(\UU H\UU^\ast),
 \end{equation}
 where~$U$ is an isometry arising from the polar decomposition~$A = U
 H^{1/2}$, and
 \item a source conditon~$f_{0}\in\af$ yields that~$g_{0}\in\lpsi$ with index function~$\psi(t):=
\Theta_{\varphi}(\lr{\Theta_{\chi}^{2}}^{-1}(t)),\ t>0$.
  \end{enumerate}

\end{lem}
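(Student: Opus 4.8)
The plan is to exploit the polar decomposition $\opA = \UU H^{1/2}$, where $\UU\colon X\to Y$ is the partial isometry onto $\overline{\mathcal R(\opA)}$ (which is an isometry on $\overline{\mathcal R(H^{1/2})}$ since $\opA$ is injective), together with the spectral calculus for the commuting pair $H,\lmf$. First I would observe that when $H$ and $\lmf$ commute, they are simultaneously diagonalizable, so the singular subspaces of $\lmf$ are invariant under $H$ and vice versa; consequently the projections $P_k$ onto the singular spaces $\Xk$ of $\lmf$ commute with $H$, hence with $H^{1/2}$. Then $\opA P_k \lmf P_k \opA^\ast = \UU H^{1/2} P_k \lmf P_k H^{1/2} \UU^\ast = \UU P_k H^{1/2}\lmf H^{1/2} P_k \UU^\ast$. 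Writing $\widetilde P_k := \UU P_k \UU^\ast$, which is an orthogonal projection in $Y$, and using $\UU^\ast\UU = I$ on the relevant subspace, one checks that $\Ck = \widetilde P_k \,(\UU H^{1/2}\lmf H^{1/2}\UU^\ast)\,\widetilde P_k$; moreover the range of $\widetilde P_k$ is exactly the span of the top $k$ singular vectors of $\UU H^{1/2}\lmf H^{1/2}\UU^\ast = \opA\lmf\opA^\ast = \lmg$, because conjugation by the isometry $\UU$ preserves the spectral data. Hence $\Ck = Q_k \lmg Q_k$ with $Q_k$ the singular projection of $\lmg$, which is precisely the statement that the push-forward prior is native.

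For part~(i), under Assumption~\ref{ass:link-noncomm} we substitute $\lmf = \chi^2(H)$ into $\lmg = \opA\lmf\opA^\ast$ and use the polar decomposition: $\lmg = \UU H^{1/2}\chi^2(H)H^{1/2}\UU^\ast = \UU\, (H\chi^2(H))\,\UU^\ast = \UU\,\Theta_\chi^2(H)\,\UU^\ast$, recalling $\Theta_\chi(t)=\sqrt t\,\chi(t)$ so that $\Theta_\chi^2(t)=t\chi^2(t)$. Since $\UU$ is an isometry, $\UU\,\Theta_\chi^2(H)\,\UU^\ast = \Theta_\chi^2(\UU H\UU^\ast)$ by the standard fact that functional calculus commutes with conjugation by isometries (for continuous functions vanishing at~$0$, which $\Theta_\chi^2$ does), yielding~\eqref{eq:lmg}.

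For part~(ii), starting from $f_0 = \varphi(H)v$ with $\norm{v}{X}\le 1$, I would compute $g_0 = \opA f_0 = \UU H^{1/2}\varphi(H)v = \UU\,\Theta_\varphi(H)\,v$. The goal is to write $g_0 = \psi(\lmg)w$ for some $w$ with $\norm{w}{Y}\le 1$ and $\psi(t) = \Theta_\varphi((\Theta_\chi^2)^{-1}(t))$. Using~\eqref{eq:lmg}, $\psi(\lmg) = \psi(\Theta_\chi^2(\UU H\UU^\ast)) = \UU\,\psi(\Theta_\chi^2(H))\,\UU^\ast = \UU\,\Theta_\varphi(H)\,\UU^\ast$, since $\psi\circ\Theta_\chi^2 = \Theta_\varphi$ by the definition of $\psi$. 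Therefore $\psi(\lmg)(\UU v) = \UU\,\Theta_\varphi(H)\,\UU^\ast\UU v = \UU\,\Theta_\varphi(H)\,v = g_0$ (again using $\UU^\ast\UU = I$ on $\overline{\mathcal R(H^{1/2})}$, which contains $v$ up to harmless modification on the kernel), and $\norm{\UU v}{Y} = \norm{v}{X}\le 1$. This gives $g_0\in\lpsi$; strictly, one should invoke Douglas' range inclusion theorem as flagged after Definition~\ref{def:gen-source} to pass cleanly between "$g_0\in\mathcal R(\psi(\lmg))$ with controlled norm" and membership in the source ellipsoid, and to handle the kernels of $H$ and $\lmf$ (which coincide) when $\opA$ is only injective after factoring them out.

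The main obstacle I anticipate is bookkeeping around the partial isometry $\UU$ and the kernels: ensuring that $\UU^\ast\UU$ acts as the identity on all the vectors that actually appear (the source elements live in ranges of functions of $H$, hence in $\overline{\mathcal R(H^{1/2})}$, so this is genuinely fine), that $\psi = \Theta_\varphi\circ(\Theta_\chi^2)^{-1}$ is a legitimate index function on the spectrum of $\lmg$ (continuity and monotonicity follow since $\Theta_\chi^2$ is a strictly increasing index function, hence invertible with index-function inverse, and $\Theta_\varphi$ is an index function), and that conjugation by $\UU$ genuinely transports the singular value decomposition of $\Theta_\chi^2(H)$ to that of $\lmg$ so that "singular projection" is preserved. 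None of these is deep, but they are the points where care is required.
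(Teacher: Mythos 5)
Your argument is essentially identical to the paper's proof: polar decomposition $\opA = \UU H^{1/2}$, commutativity of $P_k$ with $H^{1/2}$ (since $P_k$ is a singular projection of $\lmf$, which commutes with $H$), insertion of $\UU^\ast\UU$ to obtain $\Ck = (\UU P_k\UU^\ast)\lmg(\UU P_k\UU^\ast)$, the spectral-calculus identity $\UU\,\Theta_\chi^2(H)\,\UU^\ast = \Theta_\chi^2(\UU H\UU^\ast)$ for part~(i), and the source-wise computation $g_0 = \UU\Theta_\varphi(H)v = \psi(\lmg)(\UU v)$ for part~(ii). The extra attention you pay to the partial-isometry bookkeeping and to why conjugation by $\UU$ preserves the singular projections is a helpful gloss on details the paper leaves implicit (the paper states injectivity of $\opA$ up front, which makes $\UU$ a genuine isometry), but it is not a different route.
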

Based on the above technical result we state the following consequence.
\begin{prop}\label{prop:relate-direct}
Let $\lmf:X\to X$ be a positive definite, self-adjoint, trace class linear operator, and consider the companion $H=\asta$ to the forward operator $\opA:~X~\to~Y.$  Under Assumption \ref{ass:link-noncomm}, assign to any index function~$\varphi$ the related index function~$\psi(t): =\Theta_{\varphi}(\lr{\Theta_{\chi}^{2}}^{-1}(t)),\ t>0$.
  The following statements regarding a rate sequence~$\lr{\delta_n}_{n\in\nat}$ are equivalent:
  \begin{enumerate}
  \item[a)]$\lr{\delta_n}_{n\in\nat}$ is a contraction rate for the
    direct problem~\eqref{eq:direct} around $g_0=\opA f_0$, 
   under the push-forwards $\opA_{\sharp \lr{\Pftn}}$ of the sequence of Gaussian priors $\Pftn~=~\mathcal{N}(0,P_{\ktn}\lmf P_{\ktn})$ on $f$, where $P_k$ is the orthogonal projection onto the $k$-th singular space $\Xk$ of $\lmf$,  and for $f_0\in {\af}$.
 \item[b)]$\lr{\delta_n}_{n\in\nat}$ is a rate of contraction for
   model \eqref{eq:noise-model}, obtained for a sequence of (native) Gaussian priors~$\mathcal N(0, Q_{\ktn}\Theta_{\chi}^{2}(\UU H\UU^\ast)Q_{\ktn})$ on $g$, where $Q_k$ is the orthogonal projection onto the $k$-th singular space $\UU \Xk$ of $\lmg:=~\Theta_{\chi}^2(\UU H\UU^\ast)$, and for $g_0\in \lmg_\psi$.
  \end{enumerate}
\end{prop}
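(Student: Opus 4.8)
The plan is to recognise that assertions a) and b) concern the \emph{same} statistical experiment once the prior on~$g$ and the class of truths are identified; the equivalence then collapses to two identifications, one of the priors and one of the source sets. Indeed, the direct problem~\eqref{eq:direct} with unknown~$g=\opA f$ \emph{is} the white noise model~\eqref{eq:noise-model}, and in both a) and b) contraction is measured in the same norm~$\norm{g-g_{0}}{Y}$ (with~$g_{0}=\opA f_{0}$ in a)). So I would aim to establish: (i) the push-forward prior~$\opA_{\sharp\lr{\Pftn}}$ of~$\Pftn=\mathcal N(0,P_{\ktn}\lmf P_{\ktn})$ appearing in a) coincides with the native prior~$\mathcal N(0,Q_{\ktn}\lmg Q_{\ktn})$ of b), where~$\lmg=\Theta_{\chi}^{2}(\UU H \UU^{\ast})$; and (ii) the two families of truths agree, that is~$\opA(\af)=\lmg_\psi$. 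Granting (i)--(ii), a) and b) are literally the same family of statements and the equivalence is immediate.

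For (i) I would use Lemma~\ref{lem:commuting-H-lmf}: under Assumption~\ref{ass:link-noncomm} we have~$\lmf=\chi^{2}(H)$, which commutes with~$H$, so the push-forward is native with untruncated covariance~$\lmg=\opA\lmf\opA^{\ast}=\Theta_{\chi}^{2}(\UU H \UU^{\ast})$, see~\eqref{eq:lmg}, where~$\opA=\UU H^{1/2}$. For the truncated covariance, since~$\Xk$ is the $k$-th eigenspace of~$H$ (equivalently of~$\lmf$), $P_{k}$ commutes with the functional calculus of~$H$, and a one-line manipulation gives
\[
\opA P_{\ktn}\lmf P_{\ktn}\opA^{\ast}
=\UU\,H\chi^{2}(H)\,P_{\ktn}\,\UU^{\ast}
=(\UU P_{\ktn}\UU^{\ast})\,\Theta_{\chi}^{2}(\UU H \UU^{\ast})\,(\UU P_{\ktn}\UU^{\ast})
=Q_{\ktn}\lmg Q_{\ktn},
\]
with~$Q_{\ktn}:=\UU P_{\ktn}\UU^{\ast}$ the orthogonal projection onto~$\UU\Xkn$; and since~$t\mapsto\Theta_{\chi}^{2}(t)=t\chi^{2}(t)$ is strictly increasing, $\UU\Xkn$ is precisely the $\ktn$-th singular space of~$\lmg=\UU H\chi^{2}(H)\UU^{\ast}$, so this is indeed the native prior of b).

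For (ii), the inclusion~$\opA(\af)\subseteq\lmg_\psi$ is part~2 of Lemma~\ref{lem:commuting-H-lmf}. For the reverse I would take~$g_{0}=\psi(\lmg)w$ with~$\norm{w}{Y}\le 1$ and exploit that, by the definition~$\psi(t)=\Theta_{\varphi}(\lr{\Theta_{\chi}^{2}}^{-1}(t))$ and~$\lmg=\Theta_{\chi}^{2}(\opA\opA^{\ast})$, functional calculus yields~$\psi(\lmg)=\Theta_{\varphi}(\opA\opA^{\ast})=\varphi(\opA\opA^{\ast})(\opA\opA^{\ast})^{1/2}$. Combining the intertwining identity~$\opA\varphi(H)=\varphi(\opA\opA^{\ast})\opA$ with the elementary fact~$(\opA\opA^{\ast})^{1/2}\lr{\text{unit ball of }Y}=\opA\lr{\text{unit ball of }X}$---which follows from~$\opA=\UU H^{1/2}$, $(\opA\opA^{\ast})^{1/2}=\UU H^{1/2}\UU^{\ast}$ and~$\UU^{\ast}\lr{\text{unit ball of }Y}=\text{unit ball of }X$, the last holding because injectivity of~$\opA$ makes~$\UU$ an isometry with~$\UU^{\ast}\UU=I_{X}$---one obtains
\[
\lmg_\psi=\varphi(\opA\opA^{\ast})(\opA\opA^{\ast})^{1/2}\lr{\text{unit ball of }Y}
=\varphi(\opA\opA^{\ast})\,\opA\lr{\text{unit ball of }X}
=\opA\varphi(H)\lr{\text{unit ball of }X}=\opA(\af).
\]
Injectivity of~$\opA$ then makes~$\opA|_{\af}$ a bijection onto~$\lmg_\psi$; alternatively, Douglas' range inclusion theorem could be used in place of the explicit computation.

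With (i)--(ii) in hand the equivalence is routine: assuming b), for any~$f_{0}\in\af$ set~$g_{0}=\opA f_{0}\in\lmg_\psi$ by (ii), note that the prior in a) equals the native prior of b) by (i) and the data model is unchanged, whence the rate~$\delta_{n}$ from b) is a rate for a); conversely, assuming a), any~$g_{0}\in\lmg_\psi$ is of the form~$\opA f_{0}$ with~$f_{0}\in\af$ by (ii), so a) gives the conclusion of b). The step I expect to be the main obstacle is the reverse inclusion in (ii): it rests on careful bookkeeping with the partial isometry from the polar decomposition~$\opA=\UU H^{1/2}$---in particular, on using injectivity of~$\opA$ to promote~$\UU$ to a genuine isometry, so that~$\UU^{\ast}$ carries the unit ball of~$Y$ \emph{onto} the unit ball of~$X$---and on the correct intertwining of~$\opA$ with the functional calculus; once these are secured, the identification of covariances in (i) and of source sets in (ii) amounts to routine functional-calculus manipulation.
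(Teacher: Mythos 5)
Your proof is correct and follows the paper's intended route: the proposition is stated as an immediate consequence of Lemma~\ref{lem:commuting-H-lmf}, and your items (i) and (ii) are exactly the two identifications that lemma supplies (covariance of the push-forward equals the native covariance, and the correspondence between source sets). One thing you do more explicitly than the paper: Lemma~\ref{lem:commuting-H-lmf} only asserts the inclusion~$\opA(\af)\subseteq\lmg_\psi$, while for the full equivalence one needs equality (or at least the reverse inclusion, so that a) implies b)). Your argument via~$\UU^{\ast}(\text{unit ball of }Y)=\text{unit ball of }X$ — valid because the polar-decomposition factor~$\UU$ is a genuine isometry when~$\opA$ is injective — supplies this and closes the gap. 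There is a small cosmetic slip in (i): the first intermediate expression~$\UU H\chi^{2}(H)P_{\ktn}\UU^{\ast}$ drops a~$P_{\ktn}$; since~$P_{\ktn}$ commutes with~$H$ and is idempotent, this is harmless (indeed~$H\chi^2(H)P_{\ktn}=P_{\ktn}\Theta_\chi^2(H)P_{\ktn}$), but you do need both projections present before inserting~$\UU^{\ast}\UU=I$ to arrive at~$Q_{\ktn}\lmg Q_{\ktn}$.
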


\subsubsection{Non-commuting case: power type smoothness}
\label{sec:non-commute}

If the operators $H=\asta$ and $\lmf$ do not commute, the push-forward of the prior on~$f$ will no longer be native for~$g= \opA f$. However, even in the
non-commuting case we can translate the smoothness assumption~$f_{0}\in \af$ with power-type $\varphi$, to a corresponding
smoothness of~$g_{0}:= \opA f_{0}$ with respect to the operator~$\lmg=\opA\lmf\opA^\ast$, under Assumption~\ref{ass:prior-linked2scale-noncomm}.
\begin{lem}\label{lem:smoothness-h2lmg}
  Suppose that Assumption~\ref{ass:prior-linked2scale-noncomm} holds. If~$f\in \af$ for
  an index function~$\varphi(t)\propto t^{\mu}$ with~$\mu\leq a$, then~$g_{0}= \opA
  f_{0} \in\lmg_{\psi}$ for an index function~$\psi(t) \propto~t^{\frac{\mu +
      1/2}{2a+1}}$, which has an operator concave square.
\end{lem}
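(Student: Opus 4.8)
The plan is to verify Lemma~\ref{lem:smoothness-h2lmg} by a chain of source-wise representations, transporting the smoothness of $f_0$ through the operator $\opA$ and then rewriting it relative to $\lmg = \opA\lmf\opA^\ast$. First I would recall the polar decomposition $\opA = U H^{1/2}$ with $U$ a partial isometry and $H=\asta$, together with the identity~\eqref{eq:a-asta12}. Since $f_0\in\af$ with $\varphi(t)\propto t^{\mu}$, we have $f_0 = \varphi(H)v$ for some $\norm{v}{X}\le 1$, whence $g_0 = \opA f_0 = U H^{1/2}\varphi(H) v = U H^{\mu + 1/2} v$. The goal is then to show that $g_0$ lies in the range of $\psi(\lmg)$ for $\psi(t)\propto t^{(\mu+1/2)/(2a+1)}$, i.e.\ that $g_0$ admits a source-wise representation $g_0 = \psi(\lmg) w$ with bounded $w$.

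The natural tool is Douglas' Range Inclusion Theorem, already flagged in the remark following Definition~\ref{def:gen-source}: to show $\mathcal{R}(H^{\mu+1/2}U^\ast)\subseteq\mathcal{R}(\psi(\lmg))$ it suffices to establish the operator inequality $U H^{2\mu+1} U^\ast \lesssim \psi^2(\lmg)$ on $Y$ (equivalently, control the quotient of the associated quadratic forms). Now Assumption~\ref{ass:prior-linked2scale-noncomm} gives $\norm{(\lmf)^{1/2} f}{X}\asymp\norm{H^a f}{X}$, which by another application of Douglas means $\lmf\asymp H^{2a}$ as forms — more precisely the ranges of $(\lmf)^{1/2}$ and $H^a$ coincide with equivalent norms. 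Feeding this into $\lmg = \opA\lmf\opA^\ast = U H^{1/2}\lmf H^{1/2} U^\ast$ yields $\lmg\asymp U H^{1/2}H^{2a}H^{1/2}U^\ast = U H^{2a+1}U^\ast$ as quadratic forms on $Y$. Hence $\psi^2(\lmg)\asymp (U H^{2a+1}U^\ast)^{(2\mu+1)/(2a+1)}$, and since $U$ restricted to $\overline{\mathcal{R}(H)}$ is isometric and commutes appropriately with functions of $H$ via $U f(H) = f(UHU^\ast)U$ on the relevant subspace, one gets $(U H^{2a+1}U^\ast)^{(2\mu+1)/(2a+1)}\asymp U H^{2\mu+1}U^\ast$. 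Comparing with the representation $g_0 = U H^{\mu+1/2}v$ closes the range inclusion and gives $g_0\in\lmg_\psi$ up to a constant, which is absorbed into the '$\propto$'.

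It remains to check that $\psi^2$ is operator concave, i.e.\ that $t\mapsto t^{2(\mu+1/2)/(2a+1)} = t^{(2\mu+1)/(2a+1)}$ is operator concave; by the power-function criterion recalled in §~\ref{sec:smoothness} this holds exactly when the exponent lies in $(0,1]$. Positivity of the exponent is immediate, and the bound by $1$ is equivalent to $2\mu+1\le 2a+1$, i.e.\ $\mu\le a$ — precisely the hypothesis of the lemma. This is also where the restriction $a\ge 1/2$ interacts cleanly: the worst case $\mu=a$ still gives exponent $1$, so $\psi$ is at least operator concave (in fact operator monotone) in the boundary case.

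The main obstacle I anticipate is the careful handling of the partial isometry $U$ when passing between functions of $H$ on $X$ and functions of $U H^{2a+1} U^\ast$ on $Y$: $U$ need not be unitary, only a partial isometry with initial space $\overline{\mathcal{R}(H^{1/2})} = \overline{\mathcal{R}(H)}$, so identities like $U f(H) U^\ast = f(U H U^\ast)$ hold only after restricting to the appropriate reducing subspaces, and one must check that the source element $v$ may be taken in $\overline{\mathcal{R}(H)}$ (which is harmless since $f_0 = \varphi(H)v$ only sees that component of $v$). The quadratic-form equivalences $\lmf\asymp H^{2a}$ and their propagation through $\opA(\cdot)\opA^\ast$ are otherwise routine given Douglas' theorem, but writing them so that the constants in the final '$\propto$' are genuinely uniform requires a little care with the order in which one applies range inclusion in both directions. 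Everything else — the arithmetic of the exponent and the operator-concavity check — is elementary.
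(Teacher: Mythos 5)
Your proposal is correct and follows essentially the same route as the paper: the paper starts from the norm equivalence~\eqref{eq:aast-lmg}, applies Heinz' inequality with exponent $\theta=\frac{\mu+1/2}{a+1/2}\leq 1$ to obtain $\norm{H^{\mu}\opA^{\ast}g}{X}\asymp\norm{(\lmg)^{\frac{\mu+1/2}{2a+1}}g}{Y}$, and then invokes Douglas' Range Inclusion Theorem to get $\mathcal R(\opA H^{\mu})=\mathcal R(\psi(\lmg))$ --- which is precisely your quadratic-form comparison of $U H^{2\mu+1}U^{\ast}$ with $\psi^{2}(\lmg)$ written in adjoint form. The one step you should make explicit is the passage from $\lmg\asymp U H^{2a+1}U^{\ast}$ to $\psi^{2}(\lmg)\asymp\lr{U H^{2a+1}U^{\ast}}^{(2\mu+1)/(2a+1)}$: this is not automatic for form-equivalences but is exactly the Heinz/L\"owner--Heinz inequality (operator monotonicity of $t^{\theta}$ for $\theta\leq 1$), so the hypothesis $\mu\leq a$ is already used there, and not only in the concluding operator-concavity check of $\psi^{2}$.
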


The proof of Lemma~\ref{lem:smoothness-h2lmg}, which holds for
$\mu\leq a$, is based on Heinz' Inequality, and this allows to treat power-type smoothness of $g_{0}$ with respect to $\lmg$, with exponent $0\leq\theta\leq 1/2$. In particular, it does not allow to fully exploit the results of Section~\ref{sec:direct} for inherited priors, which hold for $0\leq\theta\leq1$ (since they only require operator concavity of $\psi$). 

Therefore, we shall highlight the following condition, which allows to extend the range of applicability in the non-commuting cases. It is a strengthening of Assumption~\ref{ass:prior-linked2scale-noncomm}:
There exists $a\geq1/2$ such that
\begin{equation}
  \label{eq:3over2}
  \norm{\lr{\lmf}^{3/2}f}{X} \asymp \norm{H^{3a}f}{X},\quad f\in X.
\end{equation}
\begin{rem}
In view of Heinz' Inequality (with~$\theta:= 1/3$), \eqref{eq:3over2} is consistent
with Assumption~\ref{ass:prior-linked2scale-noncomm}. Conversely, in this
non-commuting case, \eqref{eq:3over2} cannot be derived from Assumption~\ref{ass:prior-linked2scale-noncomm}, but instead is a strengthening of it. In brief, the validity of a link condition yields that the eigenfunctions for the operators on both sides must share the same smoothness (which can be seen from the modulus of injectivity, reflecting the 'inverse property'). Therefore, in general a link cannot be 'lifted' to higher powers, contrasting the commuting case, where both sides share the same eigenfunctions, and so do arbitrary powers.  
\end{rem}

The proof of the  following strengthening of
Lemma~\ref{lem:smoothness-h2lmg} is  based on
interpolation in scales of Hilbert spaces, a concept which
extends Heinz' Inequality which interpolates 'element-wise' to 'operator-wise' interpolation.
\begin{lem}\label{lem:smoothness-h2lmg-3over2}
  Suppose that~(\ref{eq:3over2}) holds. If~$f\in \af$ for
  the index function $\varphi(t)=~t^{\mu}$ with~$\mu\leq 2a + 1/2$, then~$g_{0}= \opA
  f_{0} \in\lmg_{\psi}$ for the index function~$\psi(t) = t^{\frac{\mu +
      1/2}{2a+1}}$, which is operator concave.
\end{lem}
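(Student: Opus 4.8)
The plan is to reduce the claim to a single operator inequality via Douglas' Range Inclusion Theorem, dispose of the easy part exactly as in Lemma~\ref{lem:smoothness-h2lmg}, and then isolate the one new estimate that genuinely uses the strengthened link~\eqref{eq:3over2}. Write $f_{0}=\varphi(H)v=H^{\mu}v$ with $\norm{v}{X}\le1$, use the polar decomposition $\opA=\UU H^{1/2}$ to get $g_{0}=\opA f_{0}=\UU H^{\mu+1/2}v$, and recall $\lmg=\opA\lmf\opA^{\ast}=\UU H^{1/2}\lmf H^{1/2}\UU^{\ast}$. With $\psi(t)=t^{\theta}$, $\theta:=\tfrac{\mu+1/2}{2a+1}$, the assertion $g_{0}\in\lmg_{\psi}$ (with a uniform bound on the source element) reduces, by Douglas' theorem, to establishing
\[
  \UU H^{2\mu+1}\UU^{\ast}\ \lesssim\ \psi^{2}(\lmg)=\lmg^{2\theta}.
\]
Here the hypothesis $\mu\le 2a+1/2$ means precisely $2\theta=\tfrac{2\mu+1}{2a+1}\in(0,2]$; and $\theta\le1$ is exactly what makes $\psi(t)=t^{\theta}$ operator concave, which settles the last assertion of the lemma.

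First I would record, as in Lemma~\ref{lem:smoothness-h2lmg}, that Assumption~\ref{ass:prior-linked2scale-noncomm} (which is contained in~\eqref{eq:3over2} by Heinz' inequality with exponent $1/3$) yields $\lmg\asymp\UU H^{2a+1}\UU^{\ast}$, since $\scalar{\lmg y}{y}=\norm{\lmf^{1/2}H^{1/2}\UU^{\ast}y}{X}^{2}\asymp\norm{H^{a+1/2}\UU^{\ast}y}{X}^{2}=\scalar{\UU H^{2a+1}\UU^{\ast}y}{y}$. When $2\theta\le1$, i.e.\ $\mu\le a$, this already closes the argument exactly as in Lemma~\ref{lem:smoothness-h2lmg}: operator monotonicity of $t\mapsto t^{2\theta}$ (Heinz) upgrades the last relation to $\lmg^{2\theta}\asymp(\UU H^{2a+1}\UU^{\ast})^{2\theta}=\UU H^{2\mu+1}\UU^{\ast}$, which is the required inequality, and Douglas then yields the source element with controlled norm.

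For the new regime $2\theta\in(1,2]$ operator monotonicity of $t\mapsto t^{2\theta}$ fails, so $\lmg\asymp\UU H^{2a+1}\UU^{\ast}$ no longer suffices. Writing $2\theta=2s$ with $s=\theta\in(0,1]$, it is enough to establish the sharper, \emph{one level up} bound $\lmg^{2}\gtrsim\UU H^{2(2a+1)}\UU^{\ast}$: then Heinz' inequality with the admissible exponent $s\le1$ gives $\lmg^{2\theta}=(\lmg^{2})^{s}\gtrsim(\UU H^{2(2a+1)}\UU^{\ast})^{s}=\UU H^{2\mu+1}\UU^{\ast}$. Since $\lmg^{2}=\UU H^{1/2}\lmf H\lmf H^{1/2}\UU^{\ast}$ while $\UU H^{2(2a+1)}\UU^{\ast}=\UU H^{1/2}H^{4a+1}H^{1/2}\UU^{\ast}$, and sandwiching by $\UU H^{1/2}(\cdot)H^{1/2}\UU^{\ast}$ preserves the operator order, everything reduces to the single inequality
\[
  \lmf H\lmf\ \gtrsim\ H^{4a+1},\qquad\text{equivalently}\qquad\norm{H^{1/2}\lmf f}{X}\ \gtrsim\ \norm{H^{2a+1/2}f}{X},\ f\in X.
\]

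Proving this inequality is the main obstacle, and it is precisely where~\eqref{eq:3over2} and interpolation in Hilbert scales are needed. The difficulty is genuinely non-commutative: the operator $\lmf H\lmf$ carries $H$ \emph{inside}, and, as stressed in the Remark following~\eqref{eq:3over2}, the link $\norm{\lmf^{1/2}f}{X}\asymp\norm{H^{a}f}{X}$ cannot in general be lifted to higher powers, so Assumption~\ref{ass:prior-linked2scale-noncomm} alone cannot reach this bound. The plan is then: (i) note that~\eqref{eq:3over2} is equivalent to $\lmf^{3}\asymp H^{6a}$, so that together with $\lmf\asymp H^{2a}$ one obtains coincidence, with equivalent norms, of the Hilbert scales generated by $\lmf$ and by $H^{2a}$ at the two levels $1/2$ and $3/2$; (ii) invoke interpolation in scales of Hilbert spaces — the operator-wise extension of the element-wise Heinz inequality — to transfer this coincidence to all intermediate levels, two-sidedly, and in particular to control the quantities $\norm{H^{s}\lmf f}{X}$ for $1/2\le s\le 3/2$ (with $H$ on the \emph{outside}), which is what bounds $\lmf H\lmf$ below by $H^{4a+1}$; (iii) assemble the chain above to obtain $\lmg^{2\theta}\gtrsim\UU H^{2\mu+1}\UU^{\ast}$, apply Douglas to write $g_{0}=\psi(\lmg)w$ with $\norm{w}{Y}$ bounded by a constant multiple of $\norm{v}{X}\le1$, and recall that $\psi(t)=t^{\theta}$, $\theta\le1$, is operator concave. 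I expect step (ii) — the rigorous operator-wise interpolation that legitimizes moving $H$ past fractional powers of $\lmf$ in the absence of commutativity — to be the delicate point; the remainder is bookkeeping with Douglas' theorem and Heinz' inequality.
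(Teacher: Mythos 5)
Your proposal is correct; it reaches the paper's conclusion by a genuinely different final step. The paper derives the two equivalences $\norm{H^{a}\opA^{\ast}g}{X}\asymp\norm{\lr{\lmg}^{1/2}g}{Y}$ (i.e.~\eqref{eq:aast-lmg}) and $\norm{H^{2a+1/2}\opA^{\ast}g}{X}\asymp\norm{\lmg g}{Y}$ (i.e.~\eqref{eq:lmg1}) --- the latter is exactly your key inequality $H^{4a+1}\prec C\,\lmf H\lmf$ read through $\opA=\UU H^{1/2}$ --- and then invokes a theorem on interpolation in Hilbert scales to reach the intermediate levels $a\le\mu\le 2a+1/2$ before applying Douglas. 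You instead keep only the one-sided top-level bound, conjugate it to $\UU H^{2(2a+1)}\UU^{\ast}\prec C\lmg^{2}$, and descend by the L\"owner--Heinz inequality (operator monotonicity of $t\mapsto t^{\theta}$) with $\theta=\frac{\mu+1/2}{2a+1}\le1$, which covers the whole range $0<\mu\le 2a+1/2$ in one stroke and dispenses with the operator-interpolation theorem altogether; that is a legitimate simplification, with the trade-off that it works only because the target exponent happens to lie below the single endpoint you construct. Two minor points. First, your step (ii) does not in fact require operator-wise interpolation: the crux $\norm{H^{1/2}\lmf f}{X}\gtrsim\norm{H^{2a+1/2}f}{X}$ follows from two element-wise applications of Heinz, namely $\norm{H^{1/2}g}{X}\asymp\norm{\lr{\lmf}^{1/(4a)}g}{X}$ (Heinz with exponent $1/(2a)\le1$ on Assumption~\ref{ass:prior-linked2scale-noncomm}, applied to $g=\lmf f$) followed by $\norm{\lr{\lmf}^{(4a+1)/(4a)}f}{X}\asymp\norm{H^{2a+1/2}f}{X}$ (Heinz with exponent $(4a+1)/(6a)\le1$ on~\eqref{eq:3over2}, using $a\ge1/2$); this is precisely the paper's chain \eqref{eq:lmg-lmf}--\eqref{eq:lmg1}, so the "delicate point" you flag is in fact routine. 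Second, your preliminary case split at $2\theta\le1$ is redundant, since the $\lmg^{2}$ argument already covers that regime.
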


We summarize the developments of this section.
\begin{prop}\label{prop:relate-direct-noncomm}
   Let $\lmf:X\to X$ be a positive definite, self-adjoint, trace class linear operator, and consider the companion $H=\asta$ to the forward operator $\opA:X\to Y.$ Consider a
  native Gaussian prior $\Pf=\mathcal{N}(0,P_{k}\lmf P_{k})$ for~$f\in X$, where $P_k$ is the orthogonal projection onto the $k$-th singular space $\Xk$ of $\lmf$.
  
  Given $a\geq 1/2$, assign
  to the index function~$\varphi(t) = t^{\mu}, \mu>0$, the related index
  function~$\psi(t): = t^{(\mu + 1/2)/(2a+1)},\ t>0$.
  Suppose either~$\mu \leq a$ and Assumption~\ref{ass:prior-linked2scale-noncomm} holds, or $\mu\leq 2a+1/2$ and \eqref{eq:3over2} holds.
  
 Consider the direct problem~\eqref{eq:direct} around $g_0=\opA f_0$, under the sequence of priors $\Pftn$ on $f$ and for $f_0\in {\af}$. Then we can obtain a rate of contraction for this problem, by computing a rate of contraction
$\lr{\delta_n}_{n\in\nat}$ for
   model \eqref{eq:noise-model}, for the sequence $\opA_{\sharp \lr{\Pftn}}$ of inherited Gaussian priors on $g$, and for $g_0\in \lmg_\psi$ with $\lmg=\opA\lmf\opA^\ast$.
\end{prop}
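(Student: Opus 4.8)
The plan is to read this proposition as the assembly of results already established in Sections~\ref{sec:direct} and~\ref{sec:relating-problems}, so that the proof reduces to checking that the hypotheses of Theorem~\ref{thm:spc-bound}, case~\ref{it:case2}, are met in the present setting. First I would record that, by linearity, the push-forward $\opA_{\sharp \lr{\Pftn}}$ of the native truncated prior $\Pftn=\mathcal N(0,P_{\ktn}\lmf P_{\ktn})$ is a centered Gaussian prior on $g=\opA f$ with covariance $\Ck=\opA P_{\ktn}\lmf P_{\ktn}\opA^{\ast}$; since $\lmf$ is trace-class and $\opA$ bounded, $\lmg=\opA\lmf\opA^{\ast}$ is trace-class as well, so this prior is an inherited truncated prior in the sense of Section~\ref{sec:direct}, with linear map $\opT=\opA$, intrinsic operator $\opT^{\ast}\opT=\asta=H$, and underlying infinite-dimensional inherited prior of covariance $\lmg$. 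Because $P_{\ktn}$ is the singular projection of $\lmf$, this is exactly the configuration of case~\ref{it:case2} in Theorem~\ref{thm:spc-bound}.

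Next I would verify the link condition between $\lmf$ and $H$. In the first case Assumption~\ref{ass:prior-linked2scale-noncomm} (for $H=\asta$) is assumed outright. In the second case~\eqref{eq:3over2} holds, and applying Heinz' Inequality with exponent $\theta=1/3$ (as noted in the remark following~\eqref{eq:3over2}) one passes from $\lmf^{3}\asymp H^{6a}$ to $\lmf\asymp H^{2a}$, i.e.\ Assumption~\ref{ass:prior-linked2scale-noncomm} holds; so in both cases the hypothesis on $\lmf$ and $H$ required by Theorem~\ref{thm:spc-bound} is in force. Then I would translate the smoothness of the truth: if $\mu\leq a$, Lemma~\ref{lem:smoothness-h2lmg} gives $f_{0}\in\af \Rightarrow g_{0}=\opA f_{0}\in\lmg_{\psi}$ with $\psi(t)\propto t^{(\mu+1/2)/(2a+1)}$, and since $\mu\leq a$ the exponent is at most $1/2$, so $\psi$ is operator concave; if instead $\mu\leq 2a+1/2$ and~\eqref{eq:3over2} holds, Lemma~\ref{lem:smoothness-h2lmg-3over2} gives the same $\psi$ with exponent at most $1$, hence again operator concave. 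In either case $\psi$ is an index function and operator concave, so all hypotheses of Theorem~\ref{thm:spc-bound} in case~\ref{it:case2} are satisfied.

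Finally I would invoke Theorem~\ref{thm:spc-bound}: with the truncation level $k=\kn$ chosen as in~\eqref{eq:kn-def}, it bounds $\spc^{\lmg_\psi}(\kn)$, and by Chebyshev's inequality the square root of this bound furnishes a rate of contraction $\delta_n$ for model~\eqref{eq:noise-model}, which is precisely the direct problem~\eqref{eq:direct} with unknown $g=\opA f$ and truth $g_0=\opA f_0$. This is the asserted reduction. The only genuinely delicate point is the alignment of the admissible ranges of $\mu$ with the operator concavity of the induced $\psi$: the two-case structure ($\mu\leq a$ under Assumption~\ref{ass:prior-linked2scale-noncomm} versus $\mu\leq 2a+1/2$ under the strengthening~\eqref{eq:3over2}) is exactly what keeps $\psi$ within the operator-concave regime demanded by case~\ref{it:case2}, and all the analytic work behind that is already contained in Lemmas~\ref{lem:smoothness-h2lmg} and~\ref{lem:smoothness-h2lmg-3over2}, so no new estimates are needed here.
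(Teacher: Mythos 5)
Your proposal is correct and follows essentially the same route as the paper, which states this proposition as a summary of the preceding developments: identify the push-forward $\opA_{\sharp\lr{\Pftn}}$ as the inherited truncated prior with covariance $\opA P_{\ktn}\lmf P_{\ktn}\opA^{\ast}$, translate the smoothness via Lemma~\ref{lem:smoothness-h2lmg} (for $\mu\leq a$) or Lemma~\ref{lem:smoothness-h2lmg-3over2} (for $\mu\leq 2a+1/2$ under~\eqref{eq:3over2}), check operator concavity of $\psi$, and feed the result into Theorem~\ref{thm:spc-bound}, case~\ref{it:case2}. Your verification that~\eqref{eq:3over2} implies Assumption~\ref{ass:prior-linked2scale-noncomm} via Heinz' Inequality with $\theta=1/3$ matches the paper's own remark, so no gaps remain.
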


We conclude this discussion on relating the obtained smoothness of~$g_{0}=\opA f_{0}$ to the smoothness of $f_0$, as
 expressed in Propositions~\ref{prop:relate-direct}
 and~\ref{prop:relate-direct-noncomm}, respectively for the commuting and non-commuting cases.
Specifying $\chi(t):=t^a$ and $\varphi(t):=t^\mu$ in the commuting case, we restrict to the power-type smoothness and relationship between $\lmf$ and $H$, considered in the non-commuting case. In that setting, the obtained functions, representing the smoothness, should thus agree. Indeed, it is readily seen that the function~$\psi$ as obtained in Proposition~\ref{prop:relate-direct} is exactly the same as in
Proposition~\ref{prop:relate-direct-noncomm} with this specification. Therefore, the assumptions for the non-commuting case allow to maintain the results as obtained in the commuting one, however, the limitations~$a\geq 1/2$ and~$0< \mu \leq 2a+1/2$ occur, which are not seen in the commuting case.

\subsection{Rates for the inverse problem - optimality of the truncation point}
\label{sec:optimality}

Consider a forward operator $\opA$ and let $H:= \asta$ be its companion self-adjoint operator. Let $\delta_n$ be a
rate of contraction for the direct problem \eqref{eq:direct} around~$g_{0}=~\opA f_0\in Y,$ under a truncated at level $k_n$ Gaussian
prior as defined in the previous subsection. If $\lmf$ and $H$ commute, by Proposition
\ref{prop:relate-direct}, under Assumption~\ref{ass:link-noncomm} such a rate
can be computed using Theorem~\ref{thm:spc-bound}. Such a rate can also be computed
in the non-commuting case under
Assumption~\ref{ass:prior-linked2scale-noncomm},  and the corresponding result was
formulated in Proposition~\ref{prop:relate-direct-noncomm}.

Then according to
Proposition \ref{prop:ks}, in order to compute a rate of contraction
for the original inverse problem \eqref{eq:Ys}, it suffices to compute
$\veps_n=\omega_{f_0}(H^{-1/2},\Xkn,\delta_n)$. 

We have studied bounding the modulus of continuity $\omega_{f_0}(H^{-1/2},\Sk,\delta)$ in Section \ref{sec:inverseP}. Our bounds hold under Assumption \ref{ass:relations} on the relationship of the subspaces $(\Sk)_{k\in\mathbb N}$ to the singular subspaces of $H$. Since in the present Bayesian inverse problem context, $(\Xk)_{k\in\mathbb N}$ are aligned to the untruncated prior covariance operator $\lmf$, in order to apply the results of Section \ref{sec:inverseP} for bounding $\veps_n=\omega_{f_0}(H^{-1/2},\Xkn,\delta_n)$, we first need to verify that $(\Xk)_{k\in\mathbb N}$ satisfy Assumption \ref{ass:relations}. We do this in the next proposition.

  \begin{prop}\label{prop:validity-ass-relations}
    Let~$(\Xk)_{k\in\mathbb N}$ be the singular spaces for the operator~$\lmf$. 
Both Assumption~\ref{ass:link-noncomm}, or Assumption~\ref{ass:prior-linked2scale-noncomm} with smoothness~$f_{0}\in H_\varphi$ with $\varphi(t) = t^\mu$ for $0<\mu\leq a$, yield the validity of Assumption~\ref{ass:relations}. Under the stronger assumption~\eqref{eq:3over2} the range in the latter setting extends to~$\mu \leq 2a + 1/2$.
  \end{prop}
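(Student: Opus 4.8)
The plan is to dispatch the two link conditions separately, the commutative one being essentially immediate and the non-commutative one resting on operator monotonicity. Under Assumption~\ref{ass:link-noncomm} the operators $\lmf=\chi^{2}(H)$ and $H$ are simultaneously diagonalisable, and as $\chi^{2}$ is non-decreasing one may take the $k$-th singular space $\Xk$ of $\lmf$ to equal the $k$-th singular space of $H$ (resolving possible ties among the eigenvalues of $\lmf$ by diagonalising $H$ along the same basis). For this subspace the three relations in Assumption~\ref{ass:relations} hold with $M=C_{P}=C_{B}=1$, directly from the identities $\varrho(H^{1/2},\Xk)=s_{k+1}^{1/2}$, $j(H^{1/2},\Xk)=s_{k}^{1/2}$ and $\norm{(I-P_{k})\varphi(H)}{}=\varphi(s_{k+1})$ recorded before Assumption~\ref{ass:relations}; writing $f_{0}=\varphi(H)v$ with $\norm{v}{X}\le1$ gives the approximation power bound, and no restriction on $\varphi$ is needed here.

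For the non-commutative case the first step is a reduction of the link to a cleaner one. Squaring the norm equivalence in Assumption~\ref{ass:prior-linked2scale-noncomm} turns it into the operator inequality $c_{-}H^{2a}\le\lmf\le c_{+}H^{2a}$, and in particular $s_{j}(\lmf)\asymp s_{j}^{2a}$. Since $a\ge1/2$, the exponent $1/(2a)$ lies in $(0,1]$, so $t\mapsto t^{1/(2a)}$ is operator monotone (L\"owner--Heinz), and raising the previous inequality to this power yields $c_{-}^{1/(2a)}H\le(\lmf)^{1/(2a)}\le c_{+}^{1/(2a)}H$, i.e. $\norm{(\lmf)^{1/(4a)}f}{X}\asymp\norm{H^{1/2}f}{X}$ for all $f\in X$. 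Thus $H^{1/2}$ behaves, up to a fixed norm equivalence, like a power of $\lmf$, whose singular spaces are precisely the $\Xk$. I expect this to be the main obstacle: a naive \emph{element-wise} interpolation (Heinz applied to one vector at a time) controls only one of the two directions, and with a factor that degenerates as $k\to\infty$, so the Bernstein inequality in particular is beyond its reach; passing through the operator inequality and operator monotonicity restores both directions with absolute constants.

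With this in hand the three relations follow quickly. Since $\Xk$ is the $k$-th singular space of $(\lmf)^{1/(4a)}$ one has exactly $\varrho((\lmf)^{1/(4a)},\Xk)=s_{k+1}(\lmf)^{1/(4a)}$ and $j((\lmf)^{1/(4a)},\Xk)=s_{k}(\lmf)^{1/(4a)}$; inserting these into the norm equivalence above, together with $s_{j}(\lmf)^{1/(4a)}\asymp s_{j}^{1/2}$, gives the Jackson bound $\varrho(H^{1/2},\Xk)\le C_{P}s_{k+1}^{1/2}$ and the Bernstein bound $j(H^{1/2},\Xk)\ge C_{B}^{-1}s_{k}^{1/2}$. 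For the approximation power I would write $f_{0}=H^{\mu}v$ with $\norm{v}{X}\le1$, use self-adjointness to get $\norm{(I-P_{k})f_{0}}{X}\le\norm{H^{\mu}(I-P_{k})}{}=\varrho(H^{\mu},\Xk)$, and bound $\varrho(H^{\mu},\Xk)\lesssim s_{k+1}^{\mu}$: for $g\in\Xk^{\perp}$ the spectral bound $\norm{(\lmf)^{1/2}g}{X}\le s_{k+1}(\lmf)^{1/2}\norm{g}{X}$ and the link give $\norm{H^{a}g}{X}\lesssim s_{k+1}^{a}\norm{g}{X}$, and element-wise interpolation with exponent $\mu/a\le1$ (here the restriction $\mu\le a$ enters) yields $\norm{H^{\mu}g}{X}\lesssim s_{k+1}^{\mu}\norm{g}{X}$. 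Finally, under the stronger hypothesis~\eqref{eq:3over2} the link is available at the exponents $1/2$ and $3/2$, so interpolation in scales of Hilbert spaces upgrades the element-wise estimate to the operator-wise one $\norm{(\lmf)^{s}f}{X}\asymp\norm{H^{2as}f}{X}$ for $s\in[1/2,3/2]$; redoing the above estimate then gives $\varrho(H^{\mu},\Xk)\lesssim s_{k+1}^{\mu}$ for $\mu$ up to $2a+1/2$, while the Jackson and Bernstein bounds are unchanged.
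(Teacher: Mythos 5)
Your proof is correct and follows essentially the same route as the paper: the commuting case is read off from the singular spaces, and the non-commuting case reduces everything to the equivalence $\norm{H^{1/2}f}{X}\asymp\norm{(\lmf)^{1/(4a)}f}{X}$ (the paper obtains it by Heinz' inequality applied to both directions of Assumption~\ref{ass:prior-linked2scale-noncomm}, which is the same uniform operator-level statement as your L\"owner--Heinz argument, so the "obstacle" you flag is not actually one). Your treatment of the approximation power via the interpolation inequality $\norm{H^{\mu}g}{X}\le\norm{H^{a}g}{X}^{\mu/a}\norm{g}{X}^{1-\mu/a}$ on $\Xk^{\perp}$, and of the extended range via Hilbert-scale interpolation, is an equivalent (if slightly heavier) substitute for the paper's direct application of Heinz with $\theta=\mu/a$, respectively $\theta=\mu/(3a)$ to~\eqref{eq:3over2}.
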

\begin{rem}
The above result is in correspondence with~\cite[Prop.~5.3]{MR4116718}, in which the commuting case is concerned. Here this is extended to the non-commuting cases under the link conditions (Assumption~\ref{ass:prior-linked2scale-noncomm} and~\eqref{eq:3over2}).
\end{rem}  

We next investigate whether the truncation level~$\kn$ from~\eqref{eq:kn-def},
also yields an optimized bound when used as a discretization level for
the modulus of continuity, such that both bounds are optimized
simultaneously. Indeed, we will see that this is the case and the following two technical results are the key. We first establish the optimality of $\kn$ in
the commuting case, and then extend to the
non-commuting case.

Given an index function~$\psi$ we consider a rate sequence~$\delta_n$ which obeys
\begin{equation}\label{eq:mild-ass}
  2 \max\set{\psi^2(1/n), \frac{\kn}{n}} \leq \delta_n^{2} \leq \ci^2 \max\set{\psi^2(1/n), \frac{\kn}{n}},
  \end{equation}
  for a constant~$2\leq\ci< \infty$.
  
  \begin{prop}\label{prop:relation}
  Under Assumption~\ref{ass:link-noncomm} the following holds true:
    suppose that $f_{0}\in H_\varphi$, and let~$\psi(t)=\Theta_{\varphi}(\lr{\Theta_{\chi}^{2}}^{-1}(t))$. Let~$\kn$ be as
  in~(\ref{eq:kn-def}), and assume that~\eqref{eq:mild-ass} holds true 
  for a rate sequence $\lr{\delta_n}_{n\in\nat}$.
  We then have that
  $$
  \omega_{f_0}\lr{H^{-1/2}, \Xkn,\delta_n} \leq  2
  (1 + \ci)\varphi\lr{\Theta_{\varphi}^{-1}(\delta_n)}.
  $$
\end{prop}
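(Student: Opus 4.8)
\textbf{Proof proposal for Proposition~\ref{prop:relation}.}
The plan is to combine two facts: first, that under Assumption~\ref{ass:link-noncomm} the singular spaces of $\lmf$ coincide with those of $H$ (since $\lmf = \chi^2(H)$ is a function of $H$), so that Assumption~\ref{ass:relations} holds for $(\Xk)_{k\in\nat}$ with constants $C_P = C_B = M = 1$ (this is exactly the observation recorded just before Assumption~\ref{ass:relations}, that for singular subspaces $\varrho(H^{1/2},\Xk) = s_{k+1}^{1/2}$, $j(H^{1/2},\Xk) = s_k^{1/2}$, and $\norm{(I-P_k)\varphi(H)}{} = \varphi(s_{k+1})$); and second, Theorem~\ref{thm:phi-theta-bound}, which says that if we discretize the modulus at the level $\nast(\delta) := \max\set{j,\ \Theta_\varphi(s_j) > \delta}$, then $\omega_{f_0}(H^{-1/2},\Skd,\delta) \lesssim \varphi(\Theta_\varphi^{-1}(\delta))$. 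So the whole content of the proposition is the claim that the Bayesian truncation level $\kn$ from~\eqref{eq:kn-def} is, up to the flexibility allowed by~\eqref{eq:mild-ass} in $\delta_n$, the ``right'' discretization level $\nast(\delta_n)$ for the modulus — i.e. that $\Theta_\varphi(s_{\kn}) > \delta_n \geq \Theta_\varphi(s_{\kn+1})$, or at least enough of this to run the argument in Proposition~\ref{prop:main-bound}.

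First I would unwind the definitions to relate $\psi^2(s_j(\lmg))$ to $\Theta_\varphi^2(s_j(H))$. Under Assumption~\ref{ass:link-noncomm}, Lemma~\ref{lem:commuting-H-lmf} gives $\lmg = \Theta_\chi^2(UHU^\ast)$, so $s_j(\lmg) = \Theta_\chi^2(s_j(H))$; since $\psi = \Theta_\varphi \circ (\Theta_\chi^2)^{-1}$, we get $\psi^2(s_j(\lmg)) = \Theta_\varphi^2(s_j(H)) = s_j(H)\,\varphi^2(s_j(H))$. Hence the defining inequality for $\kn$ in~\eqref{eq:kn-def}, namely $\psi^2(s_j(\lmg)) > \max\set{\psi^2(1/n), j/n}$, rewrites as $\Theta_\varphi^2(s_j(H)) > \max\set{\psi^2(1/n), j/n}$. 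Now combine with the hypothesis~\eqref{eq:mild-ass}: the lower bound $\delta_n^2 \geq 2\max\set{\psi^2(1/n), \kn/n}$ together with the $\kn$-th defining inequality $\Theta_\varphi^2(s_{\kn}(H)) > \max\set{\psi^2(1/n), \kn/n}$ does \emph{not} immediately give $\Theta_\varphi(s_{\kn}) > \delta_n$ (the factor $2$ and the $\ci$ go the wrong way), so instead I would aim to bound $\varphi(s_{\kn+1})$ and $\delta_n/\sqrt{s_{\kn}}$ separately and feed them into Proposition~\ref{prop:main-bound} with $k = \kn$. From $\kn+1$ failing the defining inequality, $\Theta_\varphi^2(s_{\kn+1}) \leq \max\set{\psi^2(1/n),(\kn+1)/n}$, which by~\eqref{eq:mild-ass} (and, if needed, absorbing $(\kn+1)/n \le 2\kn/n$ using $\kn \ge 1$, valid for $n$ large by Remark~\ref{rem:kn-infinite}) is $\lesssim \delta_n^2$; hence $s_{\kn+1} \lesssim \Theta_\varphi^{-1}(\delta_n^2\cdot\text{const})$ — more precisely $\Theta_\varphi(s_{\kn+1}) \le \ci'\delta_n$ for a suitable constant, giving $s_{\kn+1} \le \Theta_\varphi^{-1}(\ci'\delta_n)$, whence $\varphi(s_{\kn+1}) = \Theta_\varphi(s_{\kn+1})/\sqrt{s_{\kn+1}} \le \ci'\delta_n/\sqrt{s_{\kn+1}}$; and since $\varphi$ is an index function, $\varphi(s_{\kn+1}) \le \varphi(\Theta_\varphi^{-1}(\ci'\delta_n))$, which one bounds by $\lesssim \varphi(\Theta_\varphi^{-1}(\delta_n))$ using subadditivity/regularity of $\Theta_\varphi^{-1}\circ(\text{const}\cdot)$ — or, cleaner, by keeping track of the constant $\ci$ explicitly as the statement does.

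For the second term $C_B\,\delta_n/\sqrt{s_{\kn}}$ in Proposition~\ref{prop:main-bound} (here $C_B = 1$), I would use that $\kn$ \emph{satisfies} the defining inequality: $\Theta_\varphi^2(s_{\kn}) > \max\set{\psi^2(1/n),\kn/n} \ge \tfrac12\delta_n^2/\ci^2$ by the upper bound in~\eqref{eq:mild-ass}. Thus $\Theta_\varphi(s_{\kn}) > \delta_n/(\sqrt2\,\ci)$, i.e. $\sqrt{s_{\kn}}\,\varphi(s_{\kn}) > \delta_n/(\sqrt2\,\ci)$, so $\delta_n/\sqrt{s_{\kn}} < \sqrt2\,\ci\,\varphi(s_{\kn}) \le \sqrt2\,\ci\,\varphi(\Theta_\varphi^{-1}(\Theta_\varphi(s_{\kn})))$. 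To push this to $\varphi(\Theta_\varphi^{-1}(\delta_n))$ I need $\Theta_\varphi(s_{\kn}) \lesssim \delta_n$ as well, i.e. an upper bound on $s_{\kn}$; this follows from Assumption~\ref{ass:relations}-type control or, more directly here, one would need $s_{\kn}$ comparable to $s_{\kn+1}$. The clean way is to observe $\Theta_\varphi(s_{\kn+1}) \le \ci'\delta_n$ (shown above) and then note that if instead $\Theta_\varphi(s_{\kn})$ were much larger than $\delta_n$, there is nothing to control directly, but Proposition~\ref{prop:main-bound}'s second term uses $s_{\kn}$ in the denominator, so a \emph{lower} bound on $s_{\kn}$ is what we need, and that we have from the defining inequality. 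The remaining subtlety is converting ``$\delta_n/\sqrt{s_{\kn}} < \sqrt2\ci\,\varphi(s_{\kn})$'' into the target form; here I would invoke monotonicity of $\Theta_\varphi^{-1}$ and the elementary identity $\varphi = \Theta_\varphi/\sqrt{\cdot}$ once more, matching against $\varphi(\Theta_\varphi^{-1}(\delta_n))$ — the bookkeeping producing the constant $2(1+\ci)$ in the statement after collecting the contributions $M(1+C_PC_B)\varphi(s_{\kn+1})$ and $C_B\delta_n/\sqrt{s_{\kn}}$ from Proposition~\ref{prop:main-bound}.

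The main obstacle I anticipate is precisely this last matching step: showing that both $s_{\kn+1}$ and $s_{\kn}$ are sandwiched tightly enough around $\Theta_\varphi^{-1}(\delta_n)$ (up to constants controlled by $\ci$) so that the two terms of Proposition~\ref{prop:main-bound} collapse to a single multiple of $\varphi(\Theta_\varphi^{-1}(\delta_n))$. The upper bound $\Theta_\varphi^2(s_{\kn+1}) \le \max\set{\psi^2(1/n),(\kn+1)/n} \lesssim \delta_n^2$ is immediate, but to control the first term we must also relate $\varphi(s_{\kn+1})$ to $\varphi(\Theta_\varphi^{-1}(\delta_n))$, and this uses only that $\varphi$ and $\Theta_\varphi^{-1}$ are index functions together with the constant inflations already present; no further assumption beyond Assumption~\ref{ass:link-noncomm} and~\eqref{eq:mild-ass} is needed, which is consistent with the statement. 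The non-commuting extension would then follow by the same computation with $\psi^2(s_j(\lmg))$ replaced by its two-sided power-type bounds in terms of $s_j(H)$ coming from Assumption~\ref{ass:prior-linked2scale-noncomm} (or~\eqref{eq:3over2}) and Lemma~\ref{lem:smoothness-h2lmg}, at the cost of additional multiplicative constants — presumably the content of the next proposition.
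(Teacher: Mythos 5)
Your proposal has the right skeleton and matches the paper's strategy: (i) observe that under Assumption~\ref{ass:link-noncomm} the spaces $\Xk$ are singular spaces for $H$, so Assumption~\ref{ass:relations} holds with $C_P=C_B=M=1$; (ii) apply Proposition~\ref{prop:main-bound}; (iii) bound $\varphi(s_{\kn+1})$ and $\delta_n/\sqrt{s_{\kn}}$ separately from the defining inequality of $\kn$ and~\eqref{eq:mild-ass}. You also correctly identify that $\psi^2(s_j(\lmg))=\Theta_\varphi^2(s_j(H))$. However, you leave a genuine gap at the step you yourself flag as "the main obstacle", and your suggested route there would not close.

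For the first term, the worry about a constant $\ci'$ in $\Theta_\varphi(s_{\kn+1})\le \ci'\delta_n$ is unnecessary. The factor $2$ in the lower bound of~\eqref{eq:mild-ass} is chosen precisely so that the inflation $(\kn+1)/n \le 2\kn/n$ (valid since $\kn\ge 1$, cf. Remark~\ref{rem:kn-infinite}) is absorbed: from~\eqref{eq:kn-def} at $\kn+1$, one gets $\Theta_\varphi^2(s_{\kn+1})\le \max\{\psi^2(1/n),(\kn+1)/n\}\le 2\max\{\psi^2(1/n),\kn/n\}\le \delta_n^2$, hence $\Theta_\varphi(s_{\kn+1})\le \delta_n$ exactly, and $\varphi(s_{\kn+1})\le \varphi(\Theta_\varphi^{-1}(\delta_n))$ with no extra constant. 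No "subadditivity/regularity of $\Theta_\varphi^{-1}$'' is needed.

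For the second term the gap is real. You derive $\Theta_\varphi(s_{\kn})>\delta_n/(\sqrt2\,\ci)$ and from it $\delta_n/\sqrt{s_{\kn}}<\sqrt2\,\ci\,\varphi(s_{\kn})$, and then try to bound $\varphi(s_{\kn})$ by $\varphi(\Theta_\varphi^{-1}(\delta_n))$. This cannot work from the defining inequality alone: the defining inequality only gives a \emph{lower} bound on $s_{\kn}$, so $\varphi(s_{\kn})$ is uncontrolled from above; relying on $s_{\kn}\asymp s_{\kn+1}$ would require Assumption~\ref{ass:decay-rate}, which is not part of the hypotheses of the proposition. The paper resolves this differently: it never passes through $\varphi(s_{\kn})$. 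Writing $t_*:=(\Theta_\varphi^2)^{-1}(\delta_n^2/\ci^2)$, the lower bound $s_{\kn}\ge t_*$ (from~\eqref{eq:kn-def} combined with the upper bound in~\eqref{eq:mild-ass}) is plugged into the \emph{denominator}, giving $\delta_n^2/s_{\kn}\le \delta_n^2/t_*$. Since $\Theta_\varphi^2(t_*)=\delta_n^2/\ci^2$, the right-hand side equals $\ci^2\,\Theta_\varphi^2(t_*)/t_*=\ci^2\varphi^2(t_*)$, and because $t_*\le\Theta_\varphi^{-1}(\delta_n)$ and $\varphi$ is non-decreasing, $\varphi^2(t_*)\le\varphi^2(\Theta_\varphi^{-1}(\delta_n))$. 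This yields $\delta_n/\sqrt{s_{\kn}}\le\ci\,\varphi(\Theta_\varphi^{-1}(\delta_n))$, and collecting terms (with the factor $2$ the paper carries from Proposition~\ref{prop:main-bound}) gives the stated constant $2(1+\ci)$. The key pivot you missed is to evaluate $\delta_n/\sqrt{\cdot}$ at $t_*$ rather than at $s_{\kn}$ itself.
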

This result is extended to the non-commuting case as follows.
\begin{prop}\label{prop:relation-noncomm}
  Under Assumption~\ref{ass:prior-linked2scale-noncomm} with $\mu\leq a$, or Assumption~\eqref{eq:3over2} with $\mu\leq 2a+1/2$, the following holds true:
    suppose that~$f_{0}\in H_\varphi$ for the power type
    function~$\varphi(t) = t^{\mu}$, and let~$\psi(t)= t^{(\mu +
      1/2)/(2a +1)}$. Let~$\kn$ be as
  in~(\ref{eq:kn-def}), and assume that~\eqref{eq:mild-ass} holds true
  for a rate sequence $\lr{\delta_n}_{n\in\nat}$.
  We then have that
  $$
  \omega_{f_0}\lr{H^{-1/2}, \Xkn,\delta_n} \lesssim
  \delta_{n}^{\frac{\mu}{\mu +1/2}}.
  $$
\end{prop}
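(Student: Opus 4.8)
The plan is to apply the explicit modulus estimate of Proposition~\ref{prop:main-bound} on the subspace $\Xkn$ with radius $\delta_n$, and then to show that with the specific choice $k=\kn$ from \eqref{eq:kn-def} both terms of that estimate are of order $\delta_n^{\mu/(\mu+1/2)}$ (which, for $\varphi(t)=t^\mu$, equals $\varphi(\Theta_\varphi^{-1}(\delta_n))$, so the statement parallels Proposition~\ref{prop:relation}). First I would invoke Proposition~\ref{prop:validity-ass-relations}: in both regimes of the hypothesis — Assumption~\ref{ass:prior-linked2scale-noncomm} with $\mu\le a$, or \eqref{eq:3over2} with $\mu\le 2a+1/2$ — the singular spaces $(\Xk)_{k\in\nat}$ of $\lmf$ satisfy Assumption~\ref{ass:relations}. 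Hence Proposition~\ref{prop:main-bound} applies and, writing $s_j:=s_j(H)$, gives
\[
\omega_{f_0}\lr{H^{-1/2},\Xkn,\delta_n}\le M\lr{1+C_PC_B}\,s_{\kn+1}^{\mu}+C_B\,\frac{\delta_n}{\sqrt{s_{\kn}}}.
\]

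The one genuine computation is the comparison $s_j(\lmg)\asymp s_j^{2a+1}$ for the covariance $\lmg=\opA\lmf\opA^\ast$, which is already implicit in the proof of Lemma~\ref{lem:smoothness-h2lmg}. I would obtain it by evaluating the quadratic form $\scalar{\lmg y}{y}=\norm{\lr{\lmf}^{1/2}\opA^\ast y}{X}^2$, applying Assumption~\ref{ass:prior-linked2scale-noncomm} to the element $\opA^\ast y\in X$ to get $\asymp\norm{H^{a}\opA^\ast y}{X}^2$, and then using the polar decomposition $\opA=UH^{1/2}$ to rewrite this as $\norm{H^{a+1/2}U^\ast y}{Y}^2=\scalar{UH^{2a+1}U^\ast y}{y}$; equality of quadratic forms up to constants then forces $s_j(\lmg)\asymp s_j^{2a+1}$ via the min–max principle. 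When only \eqref{eq:3over2} is assumed, Heinz' inequality with $\theta=1/3$ still yields Assumption~\ref{ass:prior-linked2scale-noncomm}, so the comparison persists. Since $\psi(t)=t^{(\mu+1/2)/(2a+1)}$, this translates into $\psi\lr{s_j(\lmg)}\asymp s_j^{\mu+1/2}$ for all $j\in\nat$.

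With this in hand the two terms are controlled directly from \eqref{eq:kn-def} and \eqref{eq:mild-ass}. By Remark~\ref{rem:kn-infinite} we may assume $n$ large enough that $\kn\ge1$, so $\kn+1\le 2\kn$; maximality of $\kn$ then gives $\psi^2\lr{s_{\kn+1}(\lmg)}\le\max\set{\psi^2(1/n),(\kn+1)/n}\le 2\max\set{\psi^2(1/n),\kn/n}\le\delta_n^2$ by the lower bound in \eqref{eq:mild-ass}; hence $s_{\kn+1}^{\mu+1/2}\lesssim\delta_n$, i.e.\ $s_{\kn+1}^{\mu}\lesssim\delta_n^{\mu/(\mu+1/2)}$, controlling the bias term (trivially so if $s_{\kn+1}=0$). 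For the variance term, \eqref{eq:kn-def} gives $\psi^2\lr{s_{\kn}(\lmg)}>\max\set{\psi^2(1/n),\kn/n}\ge\delta_n^2/\ci^2$ by the upper bound in \eqref{eq:mild-ass}, so $s_{\kn}^{\mu+1/2}\gtrsim\delta_n$, i.e.\ $s_{\kn}\gtrsim\delta_n^{1/(\mu+1/2)}$, and therefore $\delta_n/\sqrt{s_{\kn}}\lesssim\delta_n^{\,1-\frac{1}{2\mu+1}}=\delta_n^{\mu/(\mu+1/2)}$. Adding the two estimates yields the claim.

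I expect the only real obstacle to be the correct handling of the singular-number comparison in the non-commuting case: one must apply the link condition to $\opA^\ast y$ and resist the temptation to commute $\lr{\lmf}^{1/2}$ past $H^{1/2}$ (which is not licensed here), keeping the partial isometry $U$ from the polar decomposition in the bookkeeping. Everything else is a transcription of the definition of $\kn$ together with the two-sided control \eqref{eq:mild-ass}, exactly mirroring the commuting argument of Proposition~\ref{prop:relation}, with Proposition~\ref{prop:validity-ass-relations} supplying the approximation-theoretic input.
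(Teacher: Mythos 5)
Your proposal is correct and follows essentially the same route as the paper: Proposition~\ref{prop:validity-ass-relations} plus Proposition~\ref{prop:main-bound} give the two-term bound, the comparison $s_j(\lmg)\asymp s_j^{2a+1}$ (the paper's \eqref{eq:weyl-H-lmg}, obtained there via \eqref{eq:aast-lmg} and Weyl's monotonicity theorem, equivalent to your min--max argument) turns $\psi^2\lr{s_j(\lmg)}$ into $s_j^{2\mu+1}$, and the definition of $\kn$ together with \eqref{eq:mild-ass} controls the two terms exactly as you do. No gaps.
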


Evidently, for $\kn$ as in \eqref{eq:kn-def} a bound as
  in~\eqref{eq:mild-ass} holds for $\delta_n$ equal to the optimized
  bound for the direct problem as given in the right hand side of
  \eqref{eq:spc-bound-thm}, hence our bound on the modulus of
  continuity is indeed also optimized in both the commuting and non-commuting cases according to the last two propositions. 
  
  Combined, Propositions~\ref{prop:relation} and \ref{prop:relation-noncomm} imply the validity of Theorem~\ref{thm:direct-inverse}. We emphasize that Proposition~\ref{prop:relation-noncomm} holds true for the extended range~$\mu\leq 2a+1/2$, provided that condition~\eqref{eq:3over2} holds. This yields the following corollary.
  
  \begin{cor}[Corollary to the proof of Theorem~\ref{thm:direct-inverse}]
Consider the inverse problem \eqref{eq:Ys}, and
suppose that~$f_{0}$ has smoothness~$\af$ for the function~$\varphi(t) = t^\mu$. Assume we put a
truncated Gaussian prior $\mathcal{N}(0, P_{\kn}\lmf P_{\kn})$ on $f$,
with $\lmf$ a self-adjoint, positive-definite, trace-class, linear
operator in $X$, and specify the related covariance operator~$\lmg=A\lmf A^\ast$.

Under condition~\eqref{eq:3over2} with $\mu\leq 2a+1/2$,  consider the index function
\begin{equation*}
  \psi(t) =t^{\frac{\mu + 1/2}{2a+1}}, \quad t>0.
\end{equation*}

For the choice~$\kn$ according to~\eqref{eq:kn-def}
let $\delta_n$ be given as 
\begin{equation*}
   \delta_n:= C \max\set{\psi^{2}\lr{\frac 1 n},\frac {\kn} n} 
\end{equation*}
for some constant~$C$. 
Then the posterior contracts around $f_{0}$ at a rate 
  \[
  \veps_n\asymp \varphi(\Theta_\varphi^{-1}(\delta_n)),\quad n\to\infty.
  \]

\end{cor}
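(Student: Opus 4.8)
The plan is to rerun the proof of Theorem~\ref{thm:direct-inverse} in the non-commuting regime, feeding in at each step the strengthened version that condition~\eqref{eq:3over2} makes available on the enlarged range $\mu \le 2a+1/2$. First, recall that \eqref{eq:3over2} implies Assumption~\ref{ass:prior-linked2scale-noncomm} (Heinz' inequality with $\theta = 1/3$, as recorded after~\eqref{eq:3over2}). Hence, by Proposition~\ref{prop:relate-direct-noncomm}, the push-forward $\opA_\sharp(\Pftn)$ of the truncated prior on $f$ is the inherited prior $\mathcal N(0,\opA P_{\kn}\lmf P_{\kn}\opA^\ast)$ with underlying infinite-dimensional covariance $\lmg = \opA\lmf\opA^\ast$, and by Lemma~\ref{lem:smoothness-h2lmg-3over2} the induced truth $g_0 = \opA f_0$ satisfies $g_0 \in \lmg_\psi$ with $\psi(t) = t^{(\mu+1/2)/(2a+1)}$, which is operator concave precisely because $\mu \le 2a+1/2$. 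We are therefore in case~\ref{it:case2} of Theorem~\ref{thm:spc-bound}, and for the truncation level $\kn$ of~\eqref{eq:kn-def} that theorem bounds the squared posterior contraction for the direct problem~\eqref{eq:direct} around $g_0$ by $4\ca\max\set{\psi^2(1/n),\kn/n}$; the resulting contraction rate $\delta_n$ is the one displayed in the statement, and the pair $(\kn,\delta_n)$ satisfies the two-sided relation~\eqref{eq:mild-ass}.

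Next I would pass from the direct to the inverse problem through the modulus of continuity, exactly as in Section~\ref{sec:optimality}. By Proposition~\ref{prop:ks}, $\veps_n := \omega_{f_0}(H^{-1/2},\Xkn,\delta_n)$ is a contraction rate for~\eqref{eq:Ys} at $f_0$, where $\lr{\Xk}_{k\in\nat}$ are the singular spaces of $\lmf$. To control this modulus I invoke Proposition~\ref{prop:validity-ass-relations}, whose conclusion that $\lr{\Xk}_{k\in\nat}$ satisfy Assumption~\ref{ass:relations} holds, under~\eqref{eq:3over2}, on the full range $\mu \le 2a+1/2$; and then Proposition~\ref{prop:relation-noncomm}, which is likewise valid under~\eqref{eq:3over2} for $\mu \le 2a+1/2$ and which, combined with~\eqref{eq:mild-ass}, gives
$$
\veps_n = \omega_{f_0}\lr{H^{-1/2},\Xkn,\delta_n} \lesssim \delta_n^{\frac{\mu}{\mu+1/2}}.
$$

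It remains to identify this bound with the asserted rate. For $\varphi(t) = t^\mu$ one has $\Theta_\varphi(t) = t^{\mu+1/2}$, so $\Theta_\varphi^{-1}(\delta) = \delta^{1/(\mu+1/2)}$ and $\varphi(\Theta_\varphi^{-1}(\delta)) = \delta^{\mu/(\mu+1/2)}$; hence $\veps_n$ may be taken equal, up to a constant absorbed into $C$, to $\varphi(\Theta_\varphi^{-1}(\delta_n))$, which is the claim. I do not anticipate a genuine obstacle: the argument is a verbatim copy of the proof of Theorem~\ref{thm:direct-inverse} in the non-commuting case, and the only points requiring attention are bookkeeping ones — verifying that each ingredient originally stated for $\mu \le a$ (Lemma~\ref{lem:smoothness-h2lmg}, Proposition~\ref{prop:validity-ass-relations}, Proposition~\ref{prop:relation-noncomm}) comes with an already-recorded strengthening valid for $\mu \le 2a+1/2$ under~\eqref{eq:3over2}, and noting that operator concavity of $\psi$ — needed to invoke case~\ref{it:case2} of Theorem~\ref{thm:spc-bound} — is exactly what the endpoint $\mu = 2a+1/2$ preserves, so that the enlarged range $\mu \le 2a+1/2$ is the best one compatible with this method.
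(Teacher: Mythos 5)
Your proposal is correct and follows the paper's own route essentially verbatim: the paper derives this corollary precisely by observing that Lemma~\ref{lem:smoothness-h2lmg-3over2}, Proposition~\ref{prop:validity-ass-relations} and Proposition~\ref{prop:relation-noncomm} all remain valid on the extended range $\mu\leq 2a+1/2$ under condition~\eqref{eq:3over2}, and then rerunning the non-commuting branch of the proof of Theorem~\ref{thm:direct-inverse} (Theorem~\ref{thm:spc-bound} for $\delta_n$, Proposition~\ref{prop:ks} plus the modulus bound for $\veps_n$). Your closing identification of $\delta_n^{\mu/(\mu+1/2)}$ with $\varphi(\Theta_\varphi^{-1}(\delta_n))$ is also exactly the intended reading.
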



\section{Examples}
\label{sec:xmpls}

Here we exhibit how to use Theorem \ref{thm:direct-inverse} in order to obtain rates of contraction for the inverse problem~\eqref{eq:Ys}. The subsequent examples will distinguish between the decay of the singular numbers of the forward map~$\opA$, being moderate (power type), severe (exponential decay) or mild (logarithmic decay). Throughout we fix once and for all some element~$f_{0} \in\eS \beta$, see~\eqref{eq:sobolev-ell} in Section~\ref{xmpl:es-beta}. It will be transparent that, depending
on the underlying operator~$H=\asta$ this will result in different
source-wise representations~$f_0\in\af$. However, regardless of the kind of
ill-posedness of the operator~$H$ we will have that~$\varphi^2(s_j)\asymp j^{-2\beta}$.

For a truncated Gaussian prior on $f$ with underlying covariance operator $\lmf$, we thus need to determine SPC-rates~$\delta_n$ for the direct problems \eqref{eq:direct} which correspond to these examples. We will do this in Section~\ref{sec:direct-rates}, and we will apply Theorem~\ref{thm:spc-bound} which results in the bound \eqref{eq:spc-bound-thm} for the optimal truncation level $\kn$ given in \eqref{eq:kn-def}. For all considered types of behaviour of the singular numbers of~$\opA$, we will study truncated $\alpha$-regular Gaussian priors as introduced in Section~\ref{sec:contraction}. In addition, in the case that~$\opA$ exhibits exponential decay of the singular numbers, we shall also discuss a prior covariance operator with exponential decay (analytic prior), this is in alignment with the case analyzed in the study~\cite{MR3757524}. In all cases we will assume that $\lmf$ and $H$ commute.

Having determined rates~$\delta_n$ for the direct problem, in Section~\ref{sec:examples} we shall establish bounds for the modulus of continuity corresponding to the forward operators $\opA$ at hand. To this end we will apply Theorem \ref{thm:phi-theta-bound}, which for any $\delta$ results in the bound \eqref{eq:modbound} for the optimal truncation level $\nast$ given in \eqref{eq:nast}. We shall then highlight, that by Theorem \ref{thm:direct-inverse}, plugging $\delta=\delta_n$ in these bounds results in contraction rates for the corresponding inverse problems, for a truncated at level $\kn$ Gaussian prior. 

The rates given below for (most of) the direct and (all of the) inverse
  problems,  correspond to the minimax rates for
  estimation in Gaussian white noise, under Sobolev-type smoothness.  While for
  Examples~\ref{xmpl:power-type} and~\ref{xmpl:severe-type} these
  minimax rates are known, it is possible to find the minimax  rates for the
  mildly ill-posed case in Example~\ref{xmpl:mild-type}, by using
 Theorem~\ref{thm:spc-bound} for the direct problem and the result from~\cite{MR3859257} for the inverse problem. These rates are given here for the
  first time.

Finally, we will conclude with a discussion on non-commuting $\lmf$ and $H$ cases in  Section~\ref{sec:examples-discussion}.

\subsection{Direct rates}
\label{sec:direct-rates}

We confine to the case that $\lmf$ and $H$ commute, so that Assumption \ref{ass:link-noncomm} holds with appropriate~$\chi$. Recall that in this context, $\lmg=\opA\lmf\opA^\ast$, and that the smoothness of the truth is expressed relative to $\lmg$, via $\psi(t)$ given in \eqref{eq:psi-def}. Then, in order to obtain the truncation level~$\kn$ from~\eqref{eq:kn-def} and the corresponding bound on the $\spc$ from~\eqref{eq:spc-bound-thm}, we shall proceed as follows. 

In this commuting case we see that~$s_j(\lmg) = s_j(H) s_j(\lmf)$
and we first check if Assumption~\ref{ass:decay-rate} holds, in which case we can use Proposition~\ref{prop:alphalessbeta} to determine whether the regularization term dominates in the bound \eqref{eq:spc-bound-thm} or not. 
Furthermore, we make use of the identity 
\begin{math}
   \psi(\lmg) = \Theta_{\varphi}(\UU H \UU^\ast),
\end{math}
 which holds for  $\psi(t)$ from~\eqref{eq:psi-def}, and this extends to the singular
 numbers. 
 Using this identity,
 condition~(\ref{eq:alphabeta}) translates to
 \begin{equation}
   \label{eq:alpha-beta-new}
   (j^{-2\beta} \asymp)\quad  {\varphi}^{2}(s_{j}) \leq \cd j s_{j}(\lmf),\quad j=1,2,\dots
 \end{equation} 
Under Assumption~\ref{ass:decay-rate} and \eqref{eq:alpha-beta-new}, we find~$\kn$ by balancing~$k/n \asymp \psi^2\lr{s_k(\lmg)}$ and the~$\spc$ is bounded by (a multiple of)~$\kn/n$. This bound is known to be order optimal.

If Assumption~\ref{ass:decay-rate} does not hold, then we proceed as follows, cf. Remark~\ref{rem:truncation-dominating}. We find~$l_n$ by balancing~$l/n \asymp \psi^2\lr{s_l(\lmg)}$. 
Then we check whether~$\psi^2(1/n)$ is larger than~$l_n/n$, in which case the regularization bias dominates. If this is the case, then $\kn$ is found by balancing $s_{j}(\lmg)\asymp 1/n$ and the~$\spc$ is bounded by (a multiple of)~$\psi^2(1/n)$. Otherwise, $\kn=l_n$ and the $\spc$ is bounded by (a multiple of)~$\kn/n$. In the latter case this is known to be order optimal again. We emphasize that we only need to explicitly compute $\psi$ (hence also $\chi, (\Theta^2_\chi)^{-1}$), in the case that the regularization bias dominates.
Another consequence is worth mentioning. In case that the regularization bias is dominated, and hence the obtained contraction rate corresponds to the minimax rate of statistical estimation, then the truncation level~$\kn$ is obtained from balancing~$k/n \asymp \psi^2(s_k(\lmg)) = \Theta_\varphi^2(s_k(H))$. In particular, the level~$\kn$ does not depend on the chosen regularity of~$\lmg$, it is entirely determined by the smoothness as expressed with respect to~$H$. Similar applies to the contraction rate for the inverse problem. As the minimax rate cannot depend on the prior regularity the same holds for the chosen truncation level. This is seen in the examples, below.


Notice that in Example \ref{xmpl:severe-type} (both with $\alpha$-regular and analytic priors as considered below), the direct problem corresponds to a prior covariance and smoothness of the truth, which are not standard in the literature for the white noise model. Here they appear naturally, because the structure of the direct problem is inherited from the considered inverse problem. For this reason, it was necessary to have the general setup for the direct problem in Section \ref{sec:direct}.

\begin{xmpl}[moderately ill-posed operator]\label{xmpl:power-type}
Here we assume that the operator~$H$ has
power type decay of the singular numbers, that is,\ $s_{j}(H)\asymp
j^{-2p},\ p>0,\ j=1,2,\dots$. We need to find a corresponding index function such
that~$f_{0}\in \af$. This is achieved
by letting~$\varphi(t) := t^{\beta/(2p)}$, see the example in §~\ref{sec:smoothness}, which gives $\Theta_{\varphi}(t)=t^\frac{\beta+p}{2p}$. 
  We consider truncated $\alpha$-regular priors, so that $s_j(\lmf)\asymp j^{-1-2\alpha}.$
  
 Note that $g_{0}$ has smoothness $\lmg_\psi=(UHU^\ast)_{\Theta_\varphi}$, which in this example translates to Sobolev-type smoothness of order $\beta+p$.
  
We have $s_j(\lmg)=s_js_j(\lmf)\asymp j^{-1 - 2(\alpha+p)}$, and hence the regularity of the prior increases from~$\alpha$ to~$\alpha + p$, also.  Assumption~\ref{ass:decay-rate} holds in this case. For $\alpha$-regular priors
  condition~\eqref{eq:alpha-beta-new} holds  if and only
  if~$\alpha \leq \beta$, and in this case we  know from Proposition~\ref{prop:alphalessbeta} that the regularization bias in Theorem \ref{thm:spc-bound} is of lower order.
  The optimized truncation level $\kn$ as given in \eqref{eq:kn-def}, can thus be computed by balancing
  $$
  \frac k n \asymp \psi^{2}\lr{s_{k}(\lmg)} =
  \Theta_{\varphi}^{2}\lr{s_{k}}\asymp k^{-2(\beta + p)},
  $$
  yielding~$\kn \asymp n^{\frac1{1+2\beta + 2p}}$. Plugging this into \eqref{eq:spc-bound-thm}, we obtain the bound
  \begin{equation}\label{eq:deltan-ex1}
       \delta_n^2:=\spc^{\lpsi}(\kn)\lesssim n^{-\frac{2\beta+2p}{1+2\beta+2p}},
  \end{equation}
  which is the square of the minimax rate for the white noise model under Sobolev-type smoothness of order $\beta+p$ (this is both asserted by Theorem \ref{thm:spc-bound} but also well known in this case).  
\end{xmpl}

\begin{xmpl}[severely ill-posed operator]\label{xmpl:severe-type}  Here we assume that the operator~$H$ has
exponential decay of the singular numbers, that is,\ $s_{j}(H)\asymp e^{-
2 \gamma j^{p}}, \ p>0,\ j=1,2,\dots$. The resulting index function~$\varphi$
which realizes the source condition for~$f_{0}$ is then~$\varphi(t) =
\log^{-\beta/p}(1/t)$, and the related function~$\Theta_{\varphi}$ is
given as~$\Theta_{\varphi}(t)= \sqrt t \log^{-\beta/p}(1/t)$.
Lemma~\ref{lem:geometry} shows that its inverse behaves like $\Theta_{\varphi}^{-1}(s) \sim
s^{2}\log^{2\beta/p}(1/s)$. We again consider truncated $\alpha$-regular priors, so that $s_j(\lmf)\asymp j^{-1-2\alpha}.$

  
  Note that again~$g_{0}$ has smoothness $\lmg_\psi=(UHU^\ast)_{\Theta_\varphi}$, which in this example means that $g_{0}$ has coefficients decaying at least as fast as $e^{-\gamma j^p}/j^{\beta}$.
  
  We have that
  $s_j\lr{\lmg}=s_js_j(\lmf)\asymp j^{-1-2\alpha}e^{-2\gamma j^p}$. In this case,
  Assumption~\ref{ass:decay-rate} only holds if $p\leq1$. Since we are interested in all $p>0$, we
  cannot apply Proposition \ref{prop:alphalessbeta} and we need to
  check which of the two terms dominates in the bound
  \eqref{eq:spc-bound-thm} in Theorem~\ref{thm:spc-bound}, thus we compute $\psi(t)=\Theta_\varphi((\Theta_\chi^2)^{-1}(t))$, explicitly.
  
By Assumption~\ref{ass:link-noncomm} we have that $\chi^2(s_j)=s_j(\lmf)$. Thus~$\chi^2(t)\asymp \log^{-\frac{1+2\alpha}p}(1/t)$, and hence
  $\Theta^2_{\chi}(t)\asymp t\log^{-\frac{1+2\alpha}p}(1/t)$. 
 Using Lemma
  \ref{lem:geometry}, we can invert $\Theta^2_{\chi}$ to get
  $$(\Theta^2_\chi)^{-1}(s)\sim s\log^{\frac{1+2\alpha}p}(1/s)\quad \text{as}\ s\to0,$$  thus
  \[\psi(t)\asymp t^\frac12\log^{\frac{1+2\alpha-2\beta}{2p}}(1/t),\quad \text{as } t\to0.\]
  
  On the one hand the regularization bias behaves asymptotically as \[\psi^2(1/n)\asymp\frac1n\log^{\frac{1+2\alpha-2\beta}p}(n).\]
  
 On the other hand, we find $l_n$ by balancing 
$$
\frac l n \asymp \psi^{2}\lr{s_{l}(\lmg)} =
\Theta_{\varphi}^{2}\lr{s_{l}}\asymp l^{-2\beta}e^{-2\gamma l^p},
$$
 resulting in~$l_n\asymp\log^\frac1p(n)$, again using Lemma \ref{lem:geometry}.
 We thus see that the regularization bias is of lower order, i.e., $\psi^2(1/n)\lesssim l_n/n$, if and only if $\alpha\leq \beta$, in which case $\kn$ in \eqref{eq:kn-def} is equal to $l_n$.  
 For $\alpha>\beta$, the level $\kn$ can be found by balancing $s_k(\lmg)\asymp 1/n$, yielding $\kn\asymp \log^\frac1p(n)$ again.
 The right hand side of the bound \eqref{eq:spc-bound-thm} is dominated by $\kn/n\asymp \frac1n\log^{\frac1p}(n)$ in the former case, while in the latter by $\psi^2(1/n)$ as given above.

Combining, Theorem \ref{thm:spc-bound} gives the bound
\begin{equation}\label{eq:deltan-ex2}
\delta_n^2:=\spc^{\lpsi}(k_n)\lesssim 
n^{-1}\log^\frac{1+0\vee(2\alpha-2\beta)}p(n),
\end{equation}
which, whenever~$\alpha \leq \beta$, is the square of the minimax rate for the white noise model under the smoothness class $\lmg_\psi=(\UU H \UU^\ast)_{\Theta_\varphi}$ (this is both asserted by Theorem \ref{thm:spc-bound} but also well known, again in this case).
\end{xmpl}

\begin{xmpl}
  [mildly ill-posed operator]\label{xmpl:mild-type}
  Here we assume that the operator~$H$ has
logarithmic decay of the singular numbers, that is,\ $s_{j}(H)\asymp
\log^{-2p}j,\ p>0,\ j=1,2,\dots$, such that the operator is 'almost continuously invertible'. The index function for~$f_{0}$ is then given
as~$\varphi(t) = e^{-\beta/t^{1/(2p)}},\ t>0$. The inverse of the resulting
function~$\Theta_{\varphi}$ is seen to behave
like~$\Theta_{\varphi}^{-1}(s) \sim \beta^{2p}
\log^{-2p}\lr{\frac{1/s}{\log^p(1/s)}}$ using Lemma \ref{lem:geometry}.

  We consider again $\alpha$-regular priors, so that we find that~$s_j(\lmg) = s_j s_j(\lmf) \asymp  j^{-1- 2\alpha} \log^{-2p} j$. In particular Assumption~\ref{ass:decay-rate} holds, and Condition~(\ref{eq:alpha-beta-new}) is valid if and only if~$\alpha \leq \beta$. Thus, in the latter case the regularization bias is dominated, and the truncation 
  level~$\kn$ is obtained from balancing
$$
\frac k n \asymp \psi^{2}\lr{s_{k}(\lmg)} =
\Theta_{\varphi}^{2}\lr{s_{k}}\asymp k^{-2\beta}\log^{-2p}k,
$$
resulting
in~$k_n\asymp n^\frac{1}{1+2\beta}\log^{-\frac{2p}{1+2\beta}}(n)$, again using Lemma \ref{lem:geometry}.
Notice, that we do not need to explicitly determine the function~$\psi$ in this case, since the identity~$\psi^{2}\lr{s_{k}(\lmg)} =
\Theta_{\varphi}^{2}\lr{s_{k}}$ holds throughout, as mentioned above.
We
obtain that
\begin{equation}\label{eq:deltan-ex3}
\delta_n^2:=\spc^{\lpsi}(k_n)\lesssim \frac{\kn}{n} \asymp
n^{-\frac{2\beta}{1+2\beta}}\log^{-\frac{2p}{1+2\beta}}(n),
\end{equation}
and that this is the (square of the) minimax rate of statistical
estimation in the white noise model under smoothness expressed in
terms of the index function~$\Theta_\varphi$ from above. 
\end{xmpl}

Finally, we revisit Example~\ref{xmpl:severe-type}, but this time with
the covariance operator of the Gaussian prior as considered
in~\cite[Section 3.3]{MR3757524}.
\begin{xmplno}[Example~\ref{xmpl:severe-type} with analytic prior]
  The covariance operator of the Gaussian prior is assumed to have
  eigenvalues~$s_{j}(\lmf)\asymp j^{-\alpha} e^{-\xi j^{p}}$,
  $\xi>0, \alpha>0, p>0, j=1,\dots$. 
  
   Although the element~$g_{0}=Af_{0}$ is the same as before, i.e., $g_{0}$ has coefficients decaying at least as fast as $e^{-\gamma j^p}/j^{\beta}$, its smoothness relative to the resulting~$\lmg$ is with respect to a different function~$\psi$, such that again~$g_0\in \lpsi$.
  Indeed, we find that
  $s_j\lr{\lmg}\asymp j^{-\alpha}e^{-(\xi+2\gamma) j^p}$, so that
  again Assumption \ref{ass:decay-rate} only holds if $p\leq1$. We
  thus cannot apply Proposition \ref{prop:alphalessbeta} and we again
  need to explicitly check  which of the two terms dominates the
  bound \eqref{eq:spc-bound-thm} in Theorem~\ref{thm:spc-bound}. In particular, we again need to explicitly compute $\psi(t)=\Theta_\varphi((\Theta_\chi^2)^{-1}(t)).$

  By Assumption \ref{ass:link-noncomm}, we have that
  $\chi^2(t)\asymp t^\frac{\xi}{2\gamma}\log^{-\frac{\alpha}p}(1/t)$,
  so that
  $\Theta^2_{\chi}(t)\asymp t^{1+\frac{\xi}{2\gamma}}\log^{-\frac{\alpha}p}(1/t)$. Using
  Lemma \ref{lem:geometry}, we can invert $\Theta_{\chi}^2$ to get
  $$(\Theta^2_\chi)^{-1}(s)\sim
  s^{\frac{2\gamma}{2\gamma+\xi}}\log^{\frac{2\alpha\gamma}{p(2\gamma+\xi)}}(s^{-\frac{2\gamma}{2\gamma+\xi}}), \;\text{as}\ s\to0,$$
 thus
  \[\psi(t)\asymp
    t^\frac{\gamma}{2\gamma+\xi}\log^{-\frac{\beta}{p}+\frac{\alpha\gamma}{2\gamma
        p+\xi p}}(1/t), \;\text{as}\ t\to0.\] 
        
        On the one hand the regularization bias behaves asymptotically as
  \[\psi^2(1/n)\asymp
    n^{-\frac{2\gamma}{2\gamma+\xi}}\log^{-\frac{2\beta}{p}+\frac{2\alpha\gamma}{2\gamma
        p+\xi p}}(n).\]
  
 On the other hand, we find~$l_n$ from
  balancing
$$
\frac l n \asymp \psi^{2}\lr{s_{l}(\lmg)} =
\Theta_{\varphi}^{2}\lr{s_{l}}\asymp l^{-2\beta}e^{-2\gamma l^p},
$$
resulting in~$l_n\sim \lr{\frac{1}{2\gamma}\log(n)}^\frac 1 p$, again using Lemma
\ref{lem:geometry}.  As a result the second term in the bound
\eqref{eq:spc-bound-thm} is
$\frac{l_n}{n}\sim n^{-1}\lr{\frac{1}{2\gamma}\log(n)}^\frac 1 p$, which is always dominated by the regularization bias since $\xi>0$. Thus, the truncation~$\kn$ is obtained from~$s_k(\lmg)\asymp \frac 1 n$, resulting similarly in~$\kn \sim \lr{\frac{1}{\xi + 2\gamma}\log(n)}^\frac 1 p $ (which is smaller than $l_n$).

Combining, Theorem \ref{thm:spc-bound} gives
\begin{equation}\label{eq:deltan-ex3-analytic}
\delta_n^2:= \spc^{\lpsi}(k_n)\lesssim
n^{-\frac{2\gamma}{2\gamma+\xi}}\log^{-\frac{2\beta}{p}+\frac{2\alpha\gamma}{2\gamma
    p+\xi p}}(n).
\end{equation} 
In particular, this rate is worse than the (minimax) rate obtained by the $\alpha$-regular prior. 
\end{xmplno}

\subsection{Modulus of continuity and inverse rates}
\label{sec:examples}

  Below, we use Theorem~\ref{thm:phi-theta-bound} to bound the modulus at $f_0\in\eS \beta$, for $S=\Sk$ where $\Sk$ satisfies Assumption~\ref{ass:relations}, and for the three different choices of the linear operator~$H$. We then plug the rates~$\delta=\delta_n$ for the direct problem, obtained in the previous section, 
  into these bounds. According to Theorem \ref{thm:direct-inverse}, the resulting rates are rates of contraction for the corresponding inverse problem \eqref{eq:Ys} under the respective prior.
\begin{xmplno}[Example~\ref{xmpl:power-type} continued]
Here the setup is exactly the same as in
  Example~\ref{xmpl:power-type}, with $s_j:=s_j(H)\asymp j^{-2p}$, such that $\Theta_{\varphi}(t)=t^{(\beta + p)/(2p)}$.
For the (optimal) choice $k_\delta\asymp \delta^{-\frac{1}{\beta+p}}$, we thus get the bound on the modulus of continuity
\begin{equation}\label{eq:modex1}
\varphi\lr{\Theta_{\varphi}^{-1}(\delta)} \asymp
\delta^{\frac{\beta/(2p)}{\beta/(2p) +    1/2}} = \delta^{\beta/(\beta
  + p)},\quad \text{as}\ \delta\to 0.
\end{equation}  
Then, in order to get a rate of contraction for the original inverse problem with an $\alpha$-regular Gaussian prior truncated at $\kn$, it suffices to insert~$\delta_n$ from~\eqref{eq:deltan-ex1} into bound
\eqref{eq:modex1} on the modulus of continuity. Indeed, for $\alpha\leq \beta$ we get the rate
$$
\delta_n^{\frac{\beta}{\beta + p}}\lesssim n^{-\frac{\beta}{1 + 2\beta
    + 2p}},
$$
which is known to be the minimax rate in the inverse problem setting with the assumed moderately ill-posed operator $H$, under Sobolev-type smoothness $\beta$.
\end{xmplno}
\begin{xmplno}[Example~\ref{xmpl:severe-type} continued]
With the representation of~$\varphi$ and $\Theta_\varphi$ as in Example~\ref{xmpl:severe-type} we get the bound on the modulus of continuity
\begin{equation}\label{eq:modex2}
\varphi\lr{\Theta_{\varphi}^{-1}(\delta)} \asymp
\log^{-\beta/p}(1/\delta),\quad \text{as}\ \delta\to 0,
\end{equation}
which by again using Lemma~\ref{lem:geometry}, is achieved for $k_\delta\asymp \log^{1/p}(1/\delta).$

In order to get a rate of contraction for the original inverse problem with an $\alpha$-regular Gaussian prior truncated at ~$k_n$, 
it suffices to insert $\delta_n$ from~\eqref{eq:deltan-ex2} into the bound~\eqref{eq:modex2} on
the modulus of continuity. Regardless of whether~$\alpha\leq \beta$ or not, we get the rate
$$
\log^{-\beta/p}(1/\delta_n)\lesssim\log^{-\beta/p}(n),
$$
which is known to be the minimax rate in the inverse problem setting with the assumed severely ill-posed operator $H$, and under Sobolev-type smoothness $\beta$. That is, $\alpha$-regular Gaussian priors truncated at $\kn\asymp \log^\frac 1 p(n)$, are rate adaptive over Sobolev-type smoothness in this severely ill-posed operator setting. 

When using an analytic prior we need to insert the (sub-optimal) rate from~\eqref{eq:deltan-ex3-analytic} into bound \eqref{eq:modex2} on
the modulus of continuity. This yields that
$$
\log^{-\beta/p}(1/\delta_n)\lesssim\log^{-\beta/p}(n) ,
$$
is a rate of contraction for the inverse problem. In particular, the
truncated analytic Gaussian prior with truncation
point~$k_n\asymp \log^\frac 1 p(n),$ 
is also rate adaptive over Sobolev
balls $\eS\beta$, for all $\beta>0$. This is in agreement with the
findings in~\cite[Section 3.3]{MR3757524}.

\end{xmplno}
\begin{xmplno}[Example~\ref{xmpl:mild-type} continued]
With the representation of~$\varphi$ and $\Theta_\varphi$ as in Example~\ref{xmpl:mild-type}, and using Lemma \ref{lem:geometry} again, we observe an asymptotic behavior for the modulus of continuity
\begin{equation}\label{eq:modex3}
\varphi\lr{\Theta_{\varphi}^{-1}(\delta)} \asymp \delta
\log^p\lr{\frac 1 \delta},\quad \text{as}\ \delta\to 0,
\end{equation}
and this bound is achieved for $k_\delta\asymp \delta^{-\frac1\beta}\log^{-\frac{p}\beta}(1/\delta).$
This  is (up to a logarithm) linear in~$\delta$, and the inverse
problem is not much  harder than the direct one. In analogy to~\cite{MR3815105} the problem is~\emph{mildly ill-posed}. 

Inserting
the rate for~$\delta_n$ from~\eqref{eq:deltan-ex3} into bound \eqref{eq:modex3} on the modulus of continuity yields that
$$
\delta_n \log^{p}(1/\delta_n)\lesssim n^{- \frac{\beta}{1 + 2 \beta}}
\log^{{\frac{2\beta p}{1 + 2\beta}}}(n),
$$
is a rate of contraction for the inverse problem.
\end{xmplno}

\subsection{Discussion on the non-commuting case}
\label{sec:examples-discussion}
We conclude with a discussion about the non-commuting case, and we revisit the setup of Example~\ref{xmpl:power-type}, i.e.,\ with Sobolev type smoothness~$\beta$ and power type decay of the singular numbers of~$H$ as~$s_j(H) \asymp j^{-2p}$. In this case the applicability of Theorem~\ref{thm:direct-inverse} was limited to~$\mu \leq 2a + 1/2$, due to the assumed concavity of the function~$\psi$. Translating the assumed setup, we find that the exponent giving the smoothness of $f_{0}$ specifies to~$\mu:= \beta/(2p)$, while the exponent~$a$ in Assumption~\ref{ass:prior-linked2scale-noncomm} to~$a:= (1 + 2 \alpha)/(4p)$. First, the assumption~$a\geq1/2$ imposes a minimum regularity of the prior $1+2\alpha\geq 2p$, if $2p>1$. In terms of Sobolev smoothness~$\beta$, and for~$\alpha$-regular priors, the above limitation translates to~$\beta + p \leq 1 + 2(\alpha + p)$, and the function~$\psi$ would be given by~$\psi(t) = t^{(\beta + p)/(1 + 2 (\alpha + p))}$, being concave under this limitation. This is in accordance with the discussion at the end of~\S~\ref{sec:optimized-spc}, because when turning from~$f_0$ to~$g_0 = A f_0$  the Sobolev-type smoothness increases from~$\beta$ to~$\beta + p$. 
Also, the regularity of the prior, when turning from~$\lmf$ to~$\lmg$  increases from~$\alpha$ to~$\alpha + p$, see \eqref{eq:weyl-H-lmg}. 
Using this information to compute~$\kn$ from \eqref{eq:kn-def}, we get that the $\alpha$-regular prior truncated at $\kn\asymp n^\frac1{1+2\beta+2p}$ gives the minimax rate in this non-commuting setting, for~$\alpha\leq\beta\leq 1+2\alpha+p$.

\section{Proofs}
\label{sec:proofs}

In order to understand the arguments that are used in some of the subsequent proofs, we recall a few facts from the theory of
(bounded non-negative) self-adjoint operators in Hilbert space; we refer to~\cite{MR1477662} for a comprehensive treatment.
First, we introduce the partial
ordering for (non-negative) self-adjoint operators, say~$G^{1},G^{2}\colon \spZ \to
\spZ$, acting in a Hilbert space~$\spZ$. We write~$G^{1}\prec G^{2}$ if~$\scalar{G^{1}z}{z} \leq
\scalar{G^{2}z}{z},\ z\in \spZ$, and~$G^{1}\asymp G^{2}$ if
there are constants~$0 < a_{1},a_{2} < \infty$ such that both~$G^{1}
\prec a_{2}G^{2}$ and~$G^{2} \prec a_{1}G^{1}$. Weyl's Monotonicity Theorem, see~\cite[III.2.3]{MR1477662} asserts that~$G^1\prec G^2$ implies that the singular numbers also obey~$s_j(G^1) \leq s_j(G^2),\ j=1,2,\dots$
Furthermore, we
recall \emph{Heinz' Inequality}, see~\cite[Prop.~8.21]{MR1408680}, which states that for~$0 \leq \theta
\leq 1$ the inequality~$\norm{G^{1}z}{\spZ}\leq \norm{G^{2}z}{\spZ}$ implies
that~$\norm{\lr{G^{1}}^{\theta}z}{\spZ}\leq
\norm{\lr{G^{2}}^{\theta}z}{\spZ}$, where the fractional power is again
defined by spectral calculus.
We shall also use the fact that for a positive-definite, self-adjoint operator~$H\colon X \to~X$, an isometry~$U\colon X \to Y$, and an index function~$\zeta$, we have from spectral calculus  that
\begin{equation}\label{eq:speccalc}
\UU \zeta(H) \UU^\ast = \zeta(\UU H \UU^\ast). 
\end{equation}

Finally, the above ordering in the space of self-adjoint operators in Hilbert space gives rise to notions as \emph{operator monotonicity} and \emph{operator concavity}, extending the usual comparisons from real valued functions to self-adjoint operators by spectral calculus, and we refer to the monograph~\cite{MR1477662}. Specifically,  for some range, say~$[0,a]$, an operator valued function~$\psi$ is operator concave if for any pair of non-negative self-adjoint operators~$G^1, G^2$ with spectra in~$[0,a]$ it holds true that
$$\alpha \psi(G^1) + (1-\alpha) \psi(G^2)\prec \psi\lr{\alpha G^1 + (1-\alpha)G^2},\quad 0 \leq \alpha \leq 1.
$$
In our subsequent analysis we will confine to power type index functions. Such functions are operator concave if and only if they are concave. However, we occasionally use and highlight the relevance of the operator concavity to indicate that the results have extensions to the more general context, without dwelling into this.

\subsection{Proofs of Section~\ref{sec:direct}}

\begin{proof}[Proof of Proposition~\ref{prop:error-direct-unified}]
 The bound for the $\spc(g_{0},k)$ will be based on the decomposition
 in~(\ref{eq:spc-sum}), and we shall bound each summand, separately.
 
 We start with bounding the posterior spread, and notice that for a
 (non-negative finite rank) operator~$G\colon Y \to Y$ we always have that~$\tr{G} \leq
 (\myrank{G}) \norm{G}{Y\to Y}$.
Since the prior covariance~$\Ck$ has rank at most~$k$, and since~$\lr{\Ck + \frac 1 n}^{-1}\Ck$ is norm-bounded by one,  we can bound the posterior spread as
$$
\tr\lr{C_{post,k}} \leq \frac k n  \norm{\lr{\Ck + \frac 1 n}^{-1}\Ck}{Y \to
  Y} \leq \frac  k n.
$$

 Similarly we bound the estimation variance as 
  \begin{align*}
&\E\norm{\frac 1 {\sqrt n}\lr{\Ck + \frac 1 n}^{-1} \Ck\xi}{}^{2}
    = \frac 1 n \tr\left(\lr{\Ck +
                               \frac 1 n}^{-2} \Ck^{2} \right)\\
    &\leq  \norm{\lr{\Ck + \frac 1 n}^{-1} \Ck}{}\times\tr\lr{C_{post,k}}\leq \frac{k}{n}.
  \end{align*}

It remains to bound the estimation bias~$\norm{g_{0} - \E \hat g_{k}}{Y}$ under smoothness~$g_{0}\in~\lmg_{\psi}$. To this end we notice that~$\E(\hat g_k) =\lr{\Ck + \frac 1 n}^{-1} \Ck g_0$.
  Then the bias simplifies to
  \begin{equation}
    g_{0} - \E\hat g_{k} =\lr{I -\lr{\Ck +
                     \frac 1 n}^{-1} \Ck }g_{0}
                   =  \frac 1 n\lr{\Ck+
                     \frac 1 n}^{-1}g_{0}.\label{ali:bias}
  \end{equation}
 We introduce the residual function of Tikhonov
  regularization~$\ra(t) := \alpha/(t +~\alpha),$ $\alpha>0, t>0$, and it is readily checked that for a sub-linear index function~$\psi$ we have that~$\ra(t) \psi(t) \leq \psi(\alpha)$. This is then used by spectral calculus as operator function~$\ra(\Ck)$, which implies that~$\norm{\ra(\Ck)\psi(\Ck)}{Y\to Y}\leq \psi(\alpha)$.  Since $\norm{\ra(\Ck)}{Y \to Y}\leq~1$, this yields with~$\alpha:= 1/n$ and for~$g_0 \in\lpsi$, that
  \begin{align}
      \norm{g_{0} - \E\hat g_{k}}{Y} & = \norm{\ra(\Ck)g_0}{Y}\notag\\
      & \leq  \norm{\ra(\Ck)\psi(\Ck)}{Y\to Y} +\norm{\ra(\Ck)\lr{\psi(\lmg) - \psi(\Ck)}}{Y\to Y} \notag\\
      & \leq \psi\lr{\frac 1 n} + \norm{\psi(\lmg) - \psi(\Ck)}{Y\to Y},\label{ali:psi-diff}
  \end{align}
where the last inequality holds if the index function~$\psi$ is
sub-linear. Otherwise, the maximal decrease of the first summand (as~$n\to
    \infty$) is of the order~$\frac 1 { n}$, which is known as the saturation of Tikhonov regularization.

The second summand in~\eqref{ali:psi-diff} will be bounded, both for
the commuting (native prior or inherited prior 
with commuting $\lmf, \opT^\ast\opT$) and non-commuting (inherited
prior with non-commuting $\lmf, \opT^\ast\opT$) cases.  This will then
result in an overall bound for the~$\spc$ after taking into account
the bounds for the posterior spread and the estimation variance as
already established.

  First, in the native case, the projections~$Q_{k}$ are orthogonal on
  the singular subspaces of~$\lmg$, and we have that~$\psi(\Ck) = Q_{k}\psi(\lmg)$. Thus~$\psi(\lmg) - \psi(\Ck) = (I - Q_k) \psi(\lmg) $, and hence we have that
  $$
  \norm{\psi(\lmg) - \psi(\Ck)}{Y\to Y} = \psi\lr{s_{k+1}(\lmg)}.
  $$
  Thus, overall, from~\eqref{ali:psi-diff}, and the corresponding bounds for the posterior spread and the estimation variance, the $\spc^{\lpsi}(k,g_0)$ is bounded  by
     \begin{equation}
    \label{eq:spc-bound-2terms}
      \spc^{\lpsi}(k, g_{0}) \leq 
    \max\set{\psi^{2}\lr{\frac 1 { n}}, \frac \cb {n^2}} + 
    \psi^{2}\lr{s_{k+1}(\lmg)}   + 2\frac k n. 
    \end{equation}
  for some constant $\cb>0$, and this holds uniformly for~$g_0\in
  \lpsi$. The proof is complete, since~$1/n^2\leq k/n$.

  We turn to the case of inherited priors, and we shall use the
  operator concavity of the index function~$\psi$. This implies, cf.~\cite[Thm.~X.1.1]{MR1477662}, that the second summand in~\eqref{ali:psi-diff}, is bounded as
\begin{equation}\label{eq:psi-norm-diff}
\norm{\psi(\lmg) - \psi(\Ck)}{Y \to Y} \leq \psi\lr{\norm{\lmg - \Ck}{Y \to Y}}.
\end{equation}
We have that
$$
\lmg - \Ck = \opT \lmf \opT^{\ast} - \opT \lr{\lmf}^{1/2} P_{k} \lr{\lmf}^{1/2} \opT^{\ast} 
= \opT \lr{\lmf}^{1/2} \lr{I - P_k} \lr{\lmf}^{1/2} \opT^{\ast},
$$
which gives
$$
\norm{\lmg - \Ck}{Y \to Y} = \norm{\opT \lr{\lmf}^{1/2} \lr{I - P_k}}{X \to Y}^2.
$$
We thus bound  the approximation error
\begin{equation}\label{eq:rhok}
   \rho_k := \norm{\opT \lr{\lmf}^{1/2}(I - P_{k})}{X \to Y} ,
\end{equation}
which expresses the capability to approximate the compound operator~$\opT \lr{\lmf}^{1/2}$ by finite rank approximations, yielding by virtue of \eqref{eq:psi-norm-diff} that
\begin{equation}\label{eq:psi-norm-rhok}
\norm{\psi(\lmg) - \psi(\Ck)}{Y \to Y} \leq \psi\lr{\rho_k^{2}}.
\end{equation}
To this end, we will rely upon the link between~$\opT$ and~$\lmf$, as captured by Assumption~\ref{ass:prior-linked2scale-noncomm}.
  
  First, using Weyl's monotonicity Theorem with Assumption \ref{ass:prior-linked2scale-noncomm} we find that
  \begin{equation}
  \label{eq:weyl-H-lmf}
  s_{j}^{2a}(H) \asymp  s_{j}(\lmf),\quad j=1,2,\dots
\end{equation}
Next, by applying Heinz' Inequality with~$\theta= 1/(2a)\leq 1$, we see
that
\begin{equation}
  \label{eq:h12-lmf}
  \norm{H^{1/2}f}{X} \asymp \norm{\lr{\lmf}^{1/(4a)}f}{X},\quad x\in
  X.
\end{equation}

 Using spectral calculus and Assumption \ref{ass:prior-linked2scale-noncomm} with~$f:= \opT^{\ast}g,\ g\in Y$ we find for
arbitrary~$g\in Y$ that 
\begin{equation}
  \label{eq:aast-lmg}
\norm{\lr{\opT \opT^{\ast}}^{a+1/2} g}{Y} = \norm{ H^{a} \opT^{\ast}g}{{X}} \asymp
\norm{\lr{\lmf}^{1/2} \opT^{\ast}g}{{X}} = \norm{\lr{\lmg}^{1/2}g}{Y}.  
\end{equation}

Thus, Weyl's Monotonicity Theorem yields
\begin{equation}
  \label{eq:weyl-H-lmg}
s_{j}^{a+1/2}(H) =  s_{j}^{a+1/2}(\opT \opT^{\ast}) \asymp s_{j}^{1/2}(\lmg),\quad j=1,2,\dots
\end{equation}
We shall use these estimates, and the fact that~$P_{k}$ are the singular projections of~$\lmf$,  to bound~$\rho_{k}$ as
\begin{align*}
  \rho_{k} & = \norm{\opT \lr{\lmf}^{1/2}(I - P_{k})}{X \to Y} =
             \norm{H^{1/2}\lr{\lmf}^{1/2}(I - P_{k})}{X \to X}\\
  & \asymp \norm{\lr{\lmf}^{1/(4a)}\lr{\lmf}^{1/2}(I - P_{k})}{X \to
    X} = s_{k+1}^{\frac{2a+1}{4a}}(\lmf)\\
  & = s_{k+1}^{a+1/2}(H) \asymp s_{k+1}^{1/2}(\lmg),\quad k=1,2,\dots
\end{align*}
Thus~$\rho_k^{2}\asymp s_{k+1}(\lmg)$, as~$k\to\infty$. Inserting this
into the bound from~(\ref{eq:psi-norm-rhok}) we complete the
estimate for the bias from~\eqref{ali:psi-diff}, and obtain the same
bound as in the native case, when restricting to operator
concave~$\psi$. This completes the proof.
\end{proof}
\begin{rem}
Within the context of projection schemes for ill-posed equations in Hilbert space, a more elaborate analysis allows for bounding the bias for general spectral regularization schemes, and for certain index functions which can express higher order smoothness. Specifically, such index functions are products of operator concave and Lipschitz ones; we refer to~\cite[Thm.~2]{MR2036530} for details.
\end{rem}

\begin{proof}[Proof of Theorem~\ref{thm:spc-bound}]
Let~$\kn$ be as in~\eqref{eq:kn-def} and consider the right hand side of~\eqref{eq:spc-bound}. We then observe that for the index~$\kn +1$ we have that~$\psi^2\lr{s_{\kn+1}(\lmg)} \leq \max\set{\psi^{2}\lr{\frac 1 n},\frac {\kn+1} n}$. 
   For the proof we shall distinguish two cases. First, we shall assume that~$\psi^2(1/n) \leq \kn/n$. In this case we bound
   \begin{align*}
       \psi^2\lr{\frac 1 n} + \psi^2\lr{s_{\kn+1}(\lmg)} + \frac{\kn}{n} 
     & \leq 2 \frac{\kn}{n} + \psi^{2}\lr{s_{\kn+1}(\lmg)}\\
     & \leq 2 \frac{\kn}{n} + \frac{\kn +1}{n}\leq 4 \frac{\kn}{n}.
   \end{align*}
   In the other case, when~$1/n \leq \kn/n < \psi^2(1/n)$, we bound
   \begin{align*}
       \psi^2\lr{\frac 1 n} + \psi^2\lr{s_{\kn+1}(\lmg)} + \frac{\kn}{n} 
     & \leq 2 \psi^2\lr{\frac 1 n} + \max\set{\psi^2\lr{\frac1n}, \frac{\kn}n+\frac1n}\\
     & \leq 4 \psi^2\lr{\frac 1 n}.
   \end{align*}
   Thus, in either case we find that
   $$
   \psi^2\lr{\frac 1 n} + \psi^2\lr{s_{\kn+1}(\lmg)} + \frac{\kn}{n} \leq 4 \max\set{\psi^{2}\lr{\frac 1 n},\frac {\kn} n},
   $$
  and by Proposition~\ref{prop:error-direct-unified} we get the bound~\eqref{eq:spc-bound-thm} in both settings considered in the statement.
   In order to assert that the
  contraction rate is order optimal in the case when ~$\psi^2(1/n) \leq \kn/n$, we use the fact that
  $$
\inf_{k} \lr{\psi^{2}\lr{s_{k+1}(\lmg)} +k/n} \geq \frac {\kn}{n},
$$
together with Proposition~\ref{prop:donohoetal}. The last bound is seen as
follows. First, if~$k\geq \kn$ the above bound is trivial. If, on the
other hand~$k < \kn$, yielding that~$k+1 \leq \kn$,  then
$$
\psi^{2}\lr{s_{k+1}(\lmg)} +k/n \geq \psi^{2}\lr{s_{\kn}(\lmg)} \geq \frac{\kn}{n},
$$
by the definition of~$\kn$ in~(\ref{eq:kn-def}).
\end{proof}


\begin{proof}[Proof of Proposition~\ref{prop:alphalessbeta}]
Notice that at~$\kn+1$ we have that~\begin{equation}\label{eq:kn+1}\psi^{2}\lr{s_{\kn+1}(\lmg)} \leq \max\set{\psi^2\lr{\frac 1 n },\frac {\kn +1} n}. \end{equation}

We  first assume~\eqref{eq:alphabeta} to hold and show that $\kn/n$ dominates in \eqref{eq:spc-bound-thm}.
If~$\psi^2(1/n) \leq (\kn+1)/n$, then we find
\begin{equation}\label{eq:lower-order}
\psi^2(1/n) \leq \frac{\kn+1}{n}\leq \cc\cd \frac{\kn}{n}
\end{equation}
for~$n \geq n_0:=1/(\cc\cd -1)$, 
which means that $\kn/n$ dominates in \eqref{eq:spc-bound-thm}.

  Otherwise, if~$(\kn+1)/n \leq \psi^2(1/n)$ then \eqref{eq:kn+1} gives that~$\psi^{2}\lr{s_{\kn+1}(\lmg)} \leq \psi^2(1/n)$, thus~$s_{\kn+1}(\lmg) \leq 1/n$. Using~\eqref{eq:kn-def},~\eqref{eq:alphabeta} and~\eqref{eq:decay-rate}, we bound
$$
\psi^2\lr{\frac 1 n} < \psi^2\lr{s_{\kn}(\lmg)}  \leq \cd \kn s_{\kn}(\lmg) \leq \cc\cd \kn s_{\kn+1}(\lmg) \leq \cc\cd \frac{\kn}{n},
$$
which again proves that $\kn/n$ dominates in \eqref{eq:spc-bound-thm}.

For the converse implication, suppose that~\eqref{eq:alphabeta} is violated. 
Again, if $\frac{\kn~+~1}n~\leq~\psi^2(1/n)$ then
$$
\frac{\kn}{n} \leq \frac{\kn+1}{n}\leq \psi^2(1/n),
$$
showing that the regularization bias $\psi^2(1/n)$ dominates in \eqref{eq:spc-bound-thm}.

Otherwise, if~$\psi^2(1/n) \leq (\kn+1)/n$, then by \eqref{eq:kn+1} we find that
$\psi^2\lr{s_{\kn +1}(\lmg)} \leq \frac{\kn+1} n$, 
 or equivalently that
$$
n \leq \frac {\kn+1} {\psi^{2}\lr{s_{\kn + 1}(\lmg)}}.
$$
The violation of~\eqref{eq:alphabeta} yields that
\begin{equation}\label{eq:quottozero}
\frac{j s_{j}(\lmg)}{\psi^{2}\lr{s_{j}(\lmg)}} \to 0\quad \text{as}\ j\to\infty.
\end{equation}
Hence, first using Assumption~\ref{ass:decay-rate}, we can bound
\begin{align*}
    n s_{\kn}(\lmg)& \leq \cc n s_{\kn + 1}(\lmg)\leq \cc \frac{(\kn +1) s_{\kn + 1}(\lmg)}{\psi^{2}\lr{s_{\kn +1}(\lmg)}} \longrightarrow 0,
\end{align*}
 by virtue of~\eqref{eq:quottozero}, because~$\kn \to \infty$, see Remark~\ref{rem:kn-infinite}.
Thus,  we must have that $s_{\kn}(\lmg) \leq \frac 1 n$, for $n$ sufficiently large. Overall, we conclude that then
$$
\frac {\kn}{n} < \psi^2\lr{s_{\kn}(\lmg)}
\leq \psi^{2}\lr{\frac 1 n},
$$
showing that the regularization bias dominates~$\kn/n$, as $n\to\infty$ and thus~$\kn\to~\infty$.
\end{proof}

\subsection{Proofs of Section~\ref{sec:inverseP}}

In order to establish the bound for the modulus of continuity, we rely
on the following auxiliary result, bounding the modulus of continuity in terms of
the degree of approximation and the modulus of injectivity, as
introduced in Section~\ref{sec:main-bound}.
\begin{prop}\label{prop:main-error-bound}
  Let~$f_{0}\in X$. The following bound holds true
for every~$k\in\nat$ and~$h\in \Sk$.
\begin{equation}
  \label{eq:main-error-Xn}
\norm{h - f_{0}}{X} \leq \lr{1 + 
  \frac{\varrho{(H^{1/2},\Sk)}}{j(H^{1/2},\Sk)}}\norm{(I -
  P_{k})f_{0}}{X} + 
\frac{\norm{H^{1/2}(h - f_{0})}{X}}{j(H^{1/2},\Sk)}.  
\end{equation}
\end{prop}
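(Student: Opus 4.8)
The plan is to decompose the error $h-f_{0}$ using the orthogonal projection $P_{k}$ onto $\Sk$, exploiting the fact that both $h$ and $P_{k}f_{0}$ lie in $\Sk$, so that the bulk of the error can be controlled through the modulus of injectivity, while the remaining piece is exactly the approximation error $\norm{(I-P_{k})f_{0}}{X}$.

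Concretely, I would first write $h-f_{0} = (h-P_{k}f_{0}) - (I-P_{k})f_{0}$ and apply the triangle inequality to obtain $\norm{h-f_{0}}{X} \le \norm{h-P_{k}f_{0}}{X} + \norm{(I-P_{k})f_{0}}{X}$; the second summand is already in the desired form. For the first summand, since $h-P_{k}f_{0}\in\Sk$, Definition~\ref{de:modulus-injectivity} gives $\norm{h-P_{k}f_{0}}{X} \le \norm{H^{1/2}(h-P_{k}f_{0})}{X}\big/ j(H^{1/2},\Sk)$. Then I would re-expand $h-P_{k}f_{0} = (h-f_{0}) + (I-P_{k})f_{0}$ inside the numerator, apply the triangle inequality once more, and use Definition~\ref{de:degree-approximation} in the form $\norm{H^{1/2}(I-P_{k})f_{0}}{X} \le \varrho(H^{1/2},\Sk)\,\norm{(I-P_{k})f_{0}}{X}$ (here $H^{1/2}$ is regarded as a map $X\to X$). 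Collecting the two contributions involving $\norm{(I-P_{k})f_{0}}{X}$ produces the coefficient $1 + \varrho(H^{1/2},\Sk)/j(H^{1/2},\Sk)$, which yields exactly~\eqref{eq:main-error-Xn}.

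There is essentially no obstacle in this argument; it is a one-line decomposition followed by two applications of the triangle inequality. The only points deserving a word of care are: that $P_{k}f_{0}\in\Sk$, which is what makes the modulus of injectivity applicable to $h-P_{k}f_{0}$; that the degree of approximation is being invoked for the self-adjoint companion $H^{1/2}\colon X\to X$ and not for $\opA$ itself; and that $j(H^{1/2},\Sk)>0$, which holds because $\Sk$ is finite-dimensional and $H$ (hence $H^{1/2}$) is injective, so that the division is legitimate.
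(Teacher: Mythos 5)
Your proposal is correct and follows essentially the same route as the paper's own proof: the decomposition via $f_k := P_k f_0$, the application of the modulus of injectivity to $h - P_k f_0 \in \Sk$, the re-expansion inside $H^{1/2}(\cdot)$, and the use of $\norm{H^{1/2}(I-P_k)f_0}{X} \le \varrho(H^{1/2},\Sk)\norm{(I-P_k)f_0}{X}$ (via idempotency of $I-P_k$) are exactly the steps taken there. The remarks on well-definedness of the division are fine but not needed beyond what you state.
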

\begin{proof}  Given~$f_{0}$ we assign~$f_{k}:= P_{k}f_{0}$. Clearly, both~$h, f_{k}\in \Sk$. Then  we can bound
$$
\norm{h - f_{0}}{} \leq \norm{h - f_{k}}{} + \norm{f_{k} - f_{0}}{}
= \norm{h - f_{k}}{} + \norm{(I - P_{k})f_{0}}{}.
$$
For the first summand we continue and bound, noticing that~$P_{k}(h -
f_{k}) = h - f_{k}$ as
\begin{align*}
 \norm{h - f_{k}}{} & = \norm{P_{k}(h-  f_{k})}{} \leq
                       \frac{\norm{H^{1/2}(h - f_k)}{}}{j(H^{1/2},\Sk)}\\ 
& \leq  \frac{\norm{H^{1/2}(h - f_{0})}{} + \norm{H^{1/2}(f_{0} -f_{k})}{} }{j(H^{1/2},\Sk)}\\ 
& \leq  \frac{\norm{H^{1/2}(h - f_{0})}{}}{j(H^{1/2},\Sk)} + \frac{\norm{H^{1/2}(f_{0} -
  f_{k})}{}}{j(H^{1/2},\Sk)}\\
& \leq  \frac{\norm{H^{1/2}(h - f_{0})}{}}{j(H^{1/2},\Sk)}+ 
  \frac{\varrho(H^{1/2},\Sk)\norm{(I -
  P_{k})f_{0}}{}}{j(H^{1/2},\Sk)}.
\end{align*}
This completes the proof.
\end{proof}
\begin{proof}[Proof of Theorem~\ref{thm:phi-theta-bound}]
  Given the bound from Proposition~\ref{prop:main-bound} there is a
  constant~$\cb$ such that
  $$
\omega_{f_{0}}(H^{-1/2},\Skd,\delta) \leq  \cb 
\lr{{\varphi}(s_{\nast+1}) + \frac{\delta}{\sqrt{s_{\nast}}}}.
$$
(We can take~$\cb:= \max\set{M\lr{1 + C_{P}C_{B}
  },C_{B}
}$.)

 Let~$\tast$ be the solution to the equation~$\Theta_{\varphi}(t) = \delta$,
 and $\nast$ be given as in~(\ref{eq:nast}). Notice that~$\tast \to 0$ as~$\delta\to 0$, and hence that~$\nast\to\infty$ as~$\delta\to 0$.

 First we see that~$\Theta_{\varphi}\lr{s_{\nast +1} }\leq \delta$, and hence
 $\varphi(s_{\nast+1})\leq \varphi\lr{\Theta_{\varphi}^{-1}(\delta)}$.
 Also, $s_{\nast} >\tast$, and 
 therefore we find that
 $$
 \frac{\delta}{\sqrt{s_{\nast}}} < \frac{\delta}{\sqrt{\tast}}
 =
 \frac{\Theta_{\varphi}\lr{\Theta_{\varphi}^{-1}(\delta)}}{\sqrt{\Theta_{\varphi}^{-1}(\delta)}}
 = \varphi\lr{\Theta_{\varphi}^{-1}(\delta)}.
 $$
 Overall this results in the desired bound  with~$\cg:= 2 \cb$.
\end{proof}

\subsection{Proofs of Section~\ref{sec:relating}}

\begin{proof}  [Proof of Lemma~\ref{lem:commuting-H-lmf}]
This is a consequence of the
simultaneous diagonalization Theorem, see e.g.~\cite{MR2978290}. If~$H=\asta$ and~$\lmf$ commute then this also holds true for
the projections~$P_{k}$, because these are singular with respect
to~$\lmf$. The polar decomposition of~$\opA$ yields an isometry~$\UU:X\to Y$ such that~$\opA = \UU H^{1/2}$, so that 
$$
\lmg:=\opA\lmf\opA^\ast=\UU H^{1/2}\lmf H^{1/2}\UU^\ast, \;\lmg:Y\to Y.
$$
We thus see that
$$
\Ck= \opA P_{k} \lmf P_{k}\opA^{\ast} =  \UU P_{k} \UU^{\ast}
 \UU H^{1/2} \lmf H^{1/2}\UU^{\ast} \UU P_{k} \UU^{\ast}=\UU P_{k}\UU^\ast\lmg \UU P_{k}\UU^\ast,
 $$
where~$Q_{k}:= \UU P_{k} \UU^{\ast}$ are orthogonal projections onto the spaces~$U \Xk\subset~Y$, which coincide with the singular spaces of $\lmg$.
Hence
 $$
\Ck= Q_{k}\lmg Q_{k},
$$
such that the push-forward prior~$\opA_{\sharp \lr{\Pf}}$ is native
for~$g$.

We next express the covariance $\lmg$ via $H$. To this end, we use Assumption \ref{ass:link-noncomm}, quantifying the commutativity between~$H$ and~$\lmf$ by
  assuming that these are linked via a certain index
  function~$\chi$. Then, recalling from Definition
  \ref{de:index-noncomm} that  $\Theta_{\chi}$ denotes the companion
  $\chi$, we can write, using~\eqref{eq:speccalc}, that
\begin{equation}
  \label{eq:lmg-proof}
 \lmg= \UU H^{1/2} \lmf H^{1/2} \UU^{\ast} = U \Theta_{\chi}^{2}(H)
   \UU^{\ast}= \Theta_{\chi}^{2}(\UU H\UU^\ast).  
\end{equation}
 For the last equality we used.
In particular, identity~\eqref{eq:lmg-proof} yields $s_j(\lmg) = \Theta_{\chi}^{2}(s_j),\ j=1,2,\dots$, where we recall that $s_j:=s_j(H).$

Assumption~\ref{ass:link-noncomm} also allows us to translate a given source
condition (smoothness class) for $f_0\in X$ relative to the
operator~$H$, to a source condition for $g_0=\opA f_0 = \UU H^{1/2}f_0\in Y$,
relative to the operator~$\lmg$. 
More precisely, we shall identify an
index function~$\psi$ such that $g_0\in\lmg_\psi.$
Indeed, for any element~$f_{0}\in {\af}$ we find, with~$w:= U v\in Y$, and using~\eqref{eq:lmg}, that
\begin{equation}
  \label{eq:g0inlambda}
g_{0}:= \UU H^{1/2}f_{0} = \UU \Theta_{\varphi}(H)\UU^\ast (U v) =
\Theta_{\varphi}\lr{\lr{\Theta_{\chi}^{2}}^{-1}(\lmg)}w,\quad \norm{w}{Y}\leq 1,
\end{equation}
where $\Theta_{\varphi}$ is the companion of $\varphi$, see Definition \ref{de:index-noncomm}. This corresponds to~$g_{0}\in \lmg_{\psi}$ with index function~$\psi(t):=
\Theta_{\varphi}(\lr{\Theta_{\chi}^{2}}^{-1}(t)),\ t>0$.
\end{proof}
\begin{proof}[Proof of Lemma~\ref{lem:smoothness-h2lmg}]
  Here we start from~(\ref{eq:aast-lmg}) (with~$\opT := \opA$), and apply Heinz' Inequality
  with~$\theta:= \frac{\mu+1/2}{a + 1/2}\leq 1$. This gives
  $$
  \norm{ H^{\mu} \opA^{\ast}g}{X} =\norm{\lr{\opA \opA^{\ast}}^{\mu+1/2} g}{Y}
 \asymp \norm{\lr{\lmg}^{\frac{\mu+1/2}{2a+1}}g}{Y},\quad g\in Y.   
 $$
 By virtue of Douglas' Range Inclusion Theorem, and we refer
 to~\cite{MR3985479}, this implies
 $$
\mathcal R\lr{\opA H^{\mu}} = \mathcal R\lr{\lr{\lmg}^{\frac{\mu+1/2}{2a+1}}},
$$
such that a source-wise representation $f_0=H^\mu v, \;v\in X$, yields a corresponding representation $g_{0}=\opA f_{0} =(\lmg)^{\frac{\mu+1/2}{2a+1}} w, \;w\in Y$.
Finally, under~$\mu\leq a$ we see that~$\frac{\mu +
      1/2}{2a+1}\leq 1/2$ which yields the operator concavity of~$\psi^{2}$.
  \end{proof}

\begin{proof}[Proof of Lemma~\ref{lem:smoothness-h2lmg-3over2}]
  First, as argued before, the relation
  in~\eqref{eq:3over2} implies the validity of
  Assumption~\ref{ass:prior-linked2scale-noncomm}, such
  that~(\ref{eq:aast-lmg}) holds true, and we have that
  \begin{equation}
    \label{eq:lmg12}
    \norm{H^{a}\opA^{\ast}g}{X} \asymp
    \norm{\lr{\lmg}^{1/2}g}{Y},\quad g\in Y.
  \end{equation}
  Applying~\eqref{eq:h12-lmf} (which holds under Assumption~\ref{ass:prior-linked2scale-noncomm}) to~$f:=
  \lmf \opA^{\ast}g,\ g\in Y$ we infer that
  \begin{equation}
    \label{eq:lmg-lmf}
    \norm{\lmg g}{Y} \asymp \norm{\lr{\lmf}^{(4a +1)/(4a)}\opA^{\ast}g}{X},\quad g\in Y.
  \end{equation}
  Now we actually use~(\ref{eq:3over2}). Indeed, for~$a\geq 1/2$ we
  find that~$(4a +1)/(4a) \leq 3/2$. Thus, we can apply Heinz'
  Inequality with~$\theta:= 2(4a +1)/(12a)\in[0,1]$ to~(\ref{eq:3over2}) which
  gives
  \begin{equation}
    \label{eq:lmg1}
 \norm{\lmg g}{Y} \asymp \norm{\lr{\lmf}^{(4a +1)/(4a)}\opA^{\ast}g}{X}
\asymp \norm{ H^{2a+1/2} \opA^{\ast}g}{X},\quad g\in Y.   
  \end{equation}
Thus we have two inequalities (derived from~(\ref{eq:lmg12})
and~(\ref{eq:lmg1}), respectively) and with
generic constant~$C$), namely
\begin{align*}
  \norm{H^{a} \opA^{\ast}g}{X} & \leq C \norm{\lr{\lmg}^{1/2}g}{Y},\quad g\in Y,
                            \intertext{and  also}        
  \norm{H^{2a+1/2}\opA^{\ast}g}{X} & \leq C \norm{\lmg g}{Y}, \quad g\in Y.
\end{align*}
We are thus in the setting of~\cite[Thm.~3]{MR2277542} of
interpolation in Hilbert scales, and we conclude
that for~$a \leq \mu \leq 2a +1/2$ we have that
\begin{equation}
  \label{eq:interpol-bound32}
   \norm{H^{\mu} \opA^{\ast}g}{X}  \leq C
   \norm{\lr{\lmg}^{(\mu + 1/2)/(2a +1)} g}{Y}, \quad g\in Y.
 \end{equation}
 Again, Douglas' Range Inclusion Theorem asserts that then
 $$
 \mathcal
 R\lr{\opA H^{\mu}} \subseteq \mathcal
 R\lr{\lr{\lmg}^{(\mu + 1/2)/(2a +1)}}.
 $$
 In other words, every element~$g_{0}=\opA H^{\mu} v$
 belongs to (a multiple of)~$\lpsi$ for the function~$\psi(t)=
 t^{(\mu + 1/2)/(2a +1)}$, and the proof is complete.
\end{proof}
  \begin{proof}[Proof of Proposition~\ref{prop:validity-ass-relations}]

    If Assumption~\ref{ass:link-noncomm} holds, then the operator~$\lmf$ and~$H$ commute, and hence the spaces~$\Xk$ are also singular spaces for~$H$. In this case we have that
    $$
    j(H^{1/2},\Xk) = s_k,\quad \varrho(H^{1/2},\Xk) = s_{k+1},\quad \text{and}\ \norm{(I - P_k) \varphi(H)}{X} = \varphi(s_{k+1}),
    $$
    such that Assumption~\ref{ass:relations} holds with~$C_P = C_B = M = 1$.

 Under Assumption~\ref{ass:prior-linked2scale-noncomm} we
  use~(\ref{eq:h12-lmf}) with~$f:= (I - P_{k})v,\ \norm{v}{X}\leq 1$
$$
\norm{H^{1/2}(I - P_{k})v}{X} \asymp \norm{\lr{\lmf}^{1/(4a)}(I -
  P_{k})v}{X}\leq s_{k+1}^{1/(4a)}(\lmf) \asymp s_{k+1}^{1/2},
$$
where we used~(\ref{eq:weyl-H-lmf}) for the last asymptotics. This
shows the Jackson Inequality. Similarly, using~(\ref{eq:h12-lmf})
with~$f\in \Xk,\ \norm{f}{X}=1$ we bound
$$
\norm{H^{1/2}f}{X} \asymp \norm{\lr{\lmf}^{1/(4a)}f}{X} \geq
j\lr{\lr{\lmf}^{1/(4a)},\Xk} = s_{k}^{1/(4a)}(\lmf) \asymp
s_{k}^{1/2},
$$
which shows that a Bernstein Inequality also holds true. Finally, for
a power type function~$\varphi(t) = t^{\mu}$ and for~$0 < \mu \leq a$,
Heinz' Inequality with~$\theta:= \mu/a$ applied to the asymptotics in
Assumption~\ref{ass:prior-linked2scale-noncomm} yields
$$
\norm{H^{\mu}(I - P_{k})v}{X} \asymp \norm{\lr{\lmf}^{\mu/(2a)}(I -
  P_{k})v}{X}\leq R s_{k+1}^{\mu/(2a)}(\lmf) \asymp s_{k+1}^{\mu},
$$
whenever~$\norm{v}{X}\leq R$.
Similarly, under the stronger assumption~\eqref{eq:3over2} and for the extended range $0<\mu\leq 2a+1/2$, we find by using Heinz' Inequality with~$\theta=\mu/(3a)$ that
$$
\norm{H^{\mu}(I - P_{k})v}{X} \asymp \norm{\lr{\lmf}^{\mu/(2a)}(I -
  P_{k})v}{X}\leq R s_{k+1}^{\mu/(2a)}(\lmf) \asymp s_{k+1}^{\mu}.
$$ 
Consequently, whenever~$f_0\in\af$ for~$\varphi(t)= t^{\mu}$ with~$0 < \mu \leq 2a+1/2$ (under the stronger assumption \eqref{eq:3over2} when $\mu >a$), it holds
$$
\norm{(I - P_{k})f_{0}}{X} \leq R \norm{(I - P_{k})H^{\mu}}{X\to X}
=  R \norm{H^{\mu}(I - P_{k})}{X\to X}  \lesssim  s_{k+1}^{\mu},
$$
which completes the proof.  
\end{proof}

\begin{proof}[Proof of Proposition~\ref{prop:relation}]
   By Proposition \ref{prop:validity-ass-relations}, the spaces~$(\Xk)_{k\in\mathbb N}$ satisfy
   Assumption~\ref{ass:relations}
  (with~$C_{B}=C_{P}=M=1$), and hence Proposition~\ref{prop:main-bound}
  applies and yields
  \begin{align*}
    \omega_{f_0}\lr{H^{-1/2}, \Xkn,\delta_n} & \leq 2\lr{\varphi(s_{\kn+1}) + \frac{\delta_n}{\sqrt {s_{\kn}}}}.
  \end{align*}
 We bound the two summands. By the definition of~$\kn$, and recalling from §~\ref{sec:commute} that $s_{j}(\lmg)=\Theta^2_{\chi}(s_j),$ we find
    that
    $$
    \psi^2(\Theta_{\chi}^2(s_{\kn+1})) \leq \max\set{\psi^2(1/n), \frac{\kn+1}{n}}\leq 2 \max\set{\psi^2(1/n), \frac{\kn}{n}}\leq \delta_n^2.
    $$
    This yields~$\Theta_{\varphi}(s_{\kn +1}) \leq \delta_n$, and hence that
  $$
  \varphi(s_{\kn+1}) \leq
  \varphi\lr{\Theta_{\varphi}^{-1}(\delta_n)}.
  $$
  To bound the second summand  we recall the definition of $\kn$ to see that
  $$
  \Theta_{\varphi}^{2}(s_{\kn}) = \psi^2\lr{s_{\kn}(\lmg)} > \max\set{\psi^2(1/n),\frac{\kn}{n}}\geq \frac{\delta_n^2}{\ci^2}.
  $$
 Thus we see that~$s_{\kn} \geq \lr{\Theta_{\varphi}^{2}}^{-1}\lr{\frac{\delta_n^2}{\ci^2}}$, and consequently
  \begin{align*}
       \frac{\delta_n^{2}}{s_{\kn}} & \leq \frac{\delta_n^{2}}{\lr{\Theta_{\varphi}^{2}}^{-1}\lr{\frac{\delta_n^2}{\ci^2}}}
       = \ci^2 \frac{\Theta_\varphi^2\lr{\Theta_\varphi^2}^{-1}\lr{\frac{\delta_n^{2}}{\ci^2}}}{\lr{\Theta_{\varphi}^{2}}^{-1}\lr{\frac{\delta_n^2}{\ci^2}}}\\
       & = \ci^2 \varphi^2\lr{\Theta_\varphi^2}^{-1}\lr{\frac{\delta_n^{2}}{\ci^2}}
       \leq \ci^2 \varphi^2\lr{\Theta_\varphi^2}^{-1}\lr{\delta_n^{2}},
  \end{align*} 
  where for the last bound we used that both $\varphi$ and $\Theta_\varphi$ are non-decreasing.
  The proof can thus be completed.
\end{proof}

The proof of Proposition~\ref{prop:relation} above, consisted of three steps.
First, we used Assumption~\ref{ass:relations} in order to derive a
bound for the modulus of continuity in terms of a decreasing (in~$k$)
smoothness-dependent part, and a non-decreasing part. Then, each of the two terms
were appropriately bounded by using the definition of~$\kn$. We follow
a similar strategy in the next proof as well.
\begin{proof}  [Proof of Proposition~\ref{prop:relation-noncomm}]
  By virtue of Propositions~\ref{prop:validity-ass-relations} and~\ref{prop:main-bound}, we have the
  following error bound for the modulus of continuity:
  $$
  \omega_{f_0}\lr{H^{-1/2}, \Xkn,\delta_n}  \lesssim
  {\varphi(s_{\kn+1}) + \frac{\delta_n}{\sqrt {s_{\kn}}}}.
  $$
  By the definition of~$\kn$ from~(\ref{eq:kn-def}) we shall derive an
  upper bound for~$s_{\kn+1}$, and a lower bound for~$s_{\kn}$. For
  this we recall from~(\ref{eq:weyl-H-lmg}) that~$s_{j}^{2a+1} \asymp
  s_{j}(\lmg),\ j=1,2,\dots$. So, by the choice of~$\kn$ and
  from~(\ref{eq:mild-ass}) we see
$$
s_{\kn +1}^{2\mu + 1} \lesssim \psi^{2}\lr{s_{\kn +1}(\lmg)} \leq
\frac 1 2 \delta_{n}^{2},
$$
such that we find~$s_{\kn+1}^{\mu} \lesssim \delta_{n}^{\mu/(\mu +
  1/2)}$, bounding the decay of the smoothness-dependent term.

It remains to lower bound~$s_{\kn}$. Again from~(\ref{eq:kn-def}) and~(\ref{eq:mild-ass}) we
see that
$$
\psi^{2}\lr{s_{\kn}(\lmg)} > \frac{1}{\ci^2}\delta_{n}^{2},
$$
which yields~$s_{\kn} \gtrsim \delta_{n}^{\frac 1 {\mu + 1/2}}$. This
in turn yields~$\delta_{n}/s_{\kn} \lesssim \delta_{n}^{\mu/(\mu +
  1/2)}$, hence completing the proof.
\end{proof}
\appendix

\section{Extending the bounds for the modulus of continuity}\label{app:A}

The main bound as established in Theorem~\ref{thm:phi-theta-bound}
allows for the following variation, and yields an analog and
extension of the bound~(3.4) in~\cite{MR3757524} for general
smoothness assumptions and for 
arbitrary decay rates of the singular numbers.
 To this end we enrich the subspaces~$\Sk$ as follows.
For a (decreasing positive) sequence~$\rho =
\lr{\rho_{k}}_{k\in\nat}$ and for~$k\in\nat$ we assign
$$
\Sn k :=
\set{f\in X,\quad \norm{(I-P_{k})f}{X}\leq \rho_{k}},
$$
where again we let~$P_{k}$ be the orthogonal projection onto the
subspace~$\Sk$.
\begin{prop}
  \label{prop:KS-bound}
  Suppose that~$f_{0}\in\af$, and that Assumption~\ref{ass:relations}  holds. Given~$\delta>0$ let~$\nast$ be as in~(\ref{eq:nast}).
 If the sequence~$\rho$ obeys~$\rho_{\nast} =
\bigo\lr{\varphi\lr{\Theta_{\varphi}^{-1}(\delta)}}$ as~$\delta\to
0$
then there is a constant~$\ch$ such that 
  $$
\omega_{f_{0}}(H^{-1/2},\Sn {\nast},\delta) \leq \ch \varphi\lr{\Theta_{\varphi}^{-1}(\delta)}.
$$ 
\end{prop}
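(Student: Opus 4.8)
The plan is to deduce the bound from the already proven Theorem~\ref{thm:phi-theta-bound} (equivalently, from Proposition~\ref{prop:main-bound}), by projecting an arbitrary element of the enriched set~$\Sn{\nast}$ onto the subspace~$\Skd$ and then carefully bookkeeping the extra error caused by the tail sequence~$\rho$. Throughout, Assumption~\ref{ass:relations} is in force, so the Jackson, Bernstein and approximation-power inequalities are available.

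First I would fix $\delta>0$, keep a general truncation level $k$ for the moment, and pick an arbitrary $h\in\Sn{k}$ with $\norm{H^{1/2}(h-f_{0})}{X}\le\delta$ (if no such $h$ exists, the modulus vanishes and there is nothing to prove). Set $\tilde h:=P_{k}h\in\Sk$. By the very definition of $\Sn{k}$ one has $\norm{h-\tilde h}{X}=\norm{(I-P_{k})h}{X}\le\rho_{k}$, and, writing $H^{1/2}(h-\tilde h)=H^{1/2}(I-P_{k})\,(I-P_{k})h$, the Jackson inequality yields
\[
\norm{H^{1/2}(h-\tilde h)}{X}\le\varrho(H^{1/2},\Sk)\,\rho_{k}\le C_{P}\sqrt{s_{k+1}}\,\rho_{k}.
\]
Consequently $\tilde h\in\Sk$ satisfies $\norm{H^{1/2}(\tilde h-f_{0})}{X}\le\delta+C_{P}\sqrt{s_{k+1}}\,\rho_{k}$, so $\tilde h$ belongs to the feasible set defining $\omega_{f_{0}}(H^{-1/2},\Sk,\delta+C_{P}\sqrt{s_{k+1}}\,\rho_{k})$.

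Next I would apply Proposition~\ref{prop:main-bound} at level $k$ with this enlarged noise level. Since $s_{k+1}\le s_{k}$, the additional contribution is harmless:
\[
C_{B}\,\frac{\delta+C_{P}\sqrt{s_{k+1}}\,\rho_{k}}{\sqrt{s_{k}}}= C_{B}\,\frac{\delta}{\sqrt{s_{k}}}+C_{B}C_{P}\sqrt{s_{k+1}/s_{k}}\;\rho_{k}\le C_{B}\,\frac{\delta}{\sqrt{s_{k}}}+C_{B}C_{P}\,\rho_{k},
\]
and, combining with $\norm{h-f_{0}}{X}\le\norm{\tilde h-f_{0}}{X}+\rho_{k}$ and then taking the supremum over admissible $h$, one obtains
\[
\omega_{f_{0}}(H^{-1/2},\Sn{k},\delta)\le M\lr{1+C_{P}C_{B}}\varphi(s_{k+1})+C_{B}\,\frac{\delta}{\sqrt{s_{k}}}+\lr{1+C_{B}C_{P}}\rho_{k}.
\]

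Finally, specializing $k=\nast$, I would reuse the two estimates established in the proof of Theorem~\ref{thm:phi-theta-bound}, namely $\varphi(s_{\nast+1})\le\varphi(\Theta_{\varphi}^{-1}(\delta))$ and $\delta/\sqrt{s_{\nast}}\le\varphi(\Theta_{\varphi}^{-1}(\delta))$, while the hypothesis $\rho_{\nast}=\bigo(\varphi(\Theta_{\varphi}^{-1}(\delta)))$ handles the remaining term; this gives the asserted bound with $\ch$ depending only on $M,C_{P},C_{B}$ and the implied $\bigo$-constant. The only point requiring care — and the only place where the structure of $\Sn{\nast}$ really matters — is the cancellation used above: the tail $\rho_{\nast}$ enters the $H^{1/2}$-error with a weight $\sqrt{s_{\nast+1}}$ coming from the Jackson inequality, which is exactly absorbed by the weight $1/\sqrt{s_{\nast}}$ coming from the Bernstein inequality built into Proposition~\ref{prop:main-bound}; beyond this bookkeeping the argument is routine.
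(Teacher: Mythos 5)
Your proof is correct and follows the same basic route as the paper's: split $f\in \Sn{\nast}$ into $P_{\nast}f$ plus a tail, charge the tail to $\rho_{\nast}$, and control the projected part via the bound over $\Skd$ from Proposition~\ref{prop:main-bound}/Theorem~\ref{thm:phi-theta-bound}. In fact you are more careful than the paper on one point: the paper's two-line proof bounds $\norm{h-f_0}{X}$ for $h=P_{\nast}f$ by $\omega_{f_0}(H^{-1/2},\Skd,\delta)$, which tacitly assumes $h$ is feasible at noise level $\delta$, whereas in general one only has $\norm{H^{1/2}(h-f_0)}{X}\leq \delta + C_P\sqrt{s_{\nast+1}}\,\rho_{\nast}$. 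Your enlargement of the noise level and the observation that the extra term $C_BC_P\sqrt{s_{\nast+1}/s_{\nast}}\,\rho_{\nast}\leq C_BC_P\rho_{\nast}$ is absorbed by the hypothesis on $\rho_{\nast}$ closes this small gap cleanly, at the cost only of a slightly larger constant $\ch$.
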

\begin{proof}
  Indeed, for~$f\in \Sn k$, we
let~$h:= P_{k}f\in \Sk$. Then we find that
$$
\norm{f - f_{0}}{} \leq \norm{f - h}{} + \norm{h - f_{0}}{}\leq
\rho_{k} +  \norm{h - f_{0}}{},
$$
such that as~$\delta\to 0$ we have that
$$
 \omega(H^{-1/2},\Sn {\nast},\delta) \leq \rho_{\nast} +
 \omega(H^{-1/2},\Skd,\delta)
 = \bigo\lr{\varphi\lr{\Theta_{\varphi}^{-1}(\delta)}},
 $$
 where we used the bound from Theorem~\ref{thm:phi-theta-bound}.
\end{proof}

We also mention the following extension to non-linear mappings.
\begin{ass}[non-linearity structure]\label{ass-non-linear}
There are
constants~$\ce\leq \cff<\infty$ such that
\begin{equation}
  \label{eq:non-linear-K}
  \ce  \norm{H^{1/2}(f - f_{0})}{X} \leq \norm{A(f) -
    A(f_{0})}{Y} \leq \cff  \norm{H^{1/2}(f - f_{0})}{X} ,\quad f\in \mathcal D(A),
\end{equation}
holds true, and that the element~$f_{0}\in \mathcal D(A)$ is an
interior point in the domain of~$A$, i.e.\ there is~$R>0$ such that~$B_{R}(f_{0})\subset \mathcal
D(A)$. 
\end{ass}

Such non-linearity assumption was made in a different context
in~\cite{MR3742371}. 
We will not pursue this path within the present study, but this may
lead to an extension of the present analysis to some class of
non-linear problems. We mention the following bound, for which we recall the notion of \emph{degree of approximation}~$\varrho$, as well as the \emph{modulus of injectivity~$j$} from Section~\ref{sec:main-bound} (here tentatively extended to non-linear mappings). This is an extension of Proposition~\ref{prop:main-error-bound} to non-linear mappings~$\opA$.

\begin{prop}
  Let~$f_{0}\in X$.
Under Assumption~\ref{ass-non-linear} the following bound holds true
for every~$k\in\nat$ and~$h\in \X_{k}\cap \mathcal D(\opA)$.
\begin{equation}
  \label{eq:main-error-Xn-non-linear}
\norm{h - f_{0}}{X} \leq \lr{1 + \frac \cff \ce
  \frac{\varrho(H^{1/2},\X_{k})}{j(H^{1/2},\X_{k})}}\norm{(I -
  P_{k})f_{0}}{X} + 
\frac 1 \ce \frac{\norm{\opA(h) - \opA{f_{0}})}{Y}}{j(H^{1/2},\X_{k})}.  
\end{equation}
\end{prop}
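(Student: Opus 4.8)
The plan is to transcribe the proof of Proposition~\ref{prop:main-error-bound} almost verbatim, replacing the single use of the isometry identity $\norm{\opA f}{Y} = \norm{H^{1/2}f}{X}$ (which relied on linearity of $\opA$) by the two-sided estimate in Assumption~\ref{ass-non-linear}. First I would put $f_{k} := P_{k}f_{0}$, so that both $h$ and $f_{k}$ belong to $\X_{k}$, and split by the triangle inequality
$$
\norm{h - f_{0}}{X} \leq \norm{h - f_{k}}{X} + \norm{(I - P_{k})f_{0}}{X}.
$$
Since $h - f_{k} \in \X_{k}$, the definition of the modulus of injectivity gives $\norm{h - f_{k}}{X} \leq j(H^{1/2},\X_{k})^{-1}\norm{H^{1/2}(h - f_{k})}{X}$, and a further triangle inequality splits $\norm{H^{1/2}(h - f_{k})}{X}$ into $\norm{H^{1/2}(h - f_{0})}{X} + \norm{H^{1/2}(f_{0} - f_{k})}{X}$.

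The two resulting terms are then handled by the two halves of Assumption~\ref{ass-non-linear}. For $\norm{H^{1/2}(h - f_{0})}{X}$ the lower inequality in~\eqref{eq:non-linear-K} applies, since both $h$ and $f_{0}$ lie in $\mathcal D(\opA)$, and yields $\norm{H^{1/2}(h - f_{0})}{X} \leq \ce^{-1}\norm{\opA(h) - \opA(f_{0})}{Y}$. For $\norm{H^{1/2}(f_{0} - f_{k})}{X} = \norm{H^{1/2}(I - P_{k})f_{0}}{X}$ I would use the idempotency of $I - P_{k}$ together with the degree of approximation to get $\norm{H^{1/2}(f_{0} - f_{k})}{X} \leq \varrho(H^{1/2},\X_{k})\,\norm{(I - P_{k})f_{0}}{X}$. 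Collecting everything produces the claimed inequality — in fact with the marginally smaller prefactor $1 + \varrho(H^{1/2},\X_{k})/j(H^{1/2},\X_{k})$ in front of $\norm{(I - P_{k})f_{0}}{X}$; since $\ce \leq \cff$, this is dominated by the stated prefactor $1 + \frac{\cff}{\ce}\,\varrho(H^{1/2},\X_{k})/j(H^{1/2},\X_{k})$. If one prefers to obtain the factor $\cff/\ce$ organically, one instead estimates $\norm{H^{1/2}(h - f_{k})}{X} \leq \ce^{-1}\norm{\opA(h) - \opA(f_{k})}{Y}$ and splits $\norm{\opA(h) - \opA(f_{k})}{Y} \leq \norm{\opA(h) - \opA(f_{0})}{Y} + \cff\,\norm{H^{1/2}(f_{0} - f_{k})}{X}$ using the \emph{upper} inequality in~\eqref{eq:non-linear-K}, before applying the degree of approximation to the last term.

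I do not expect a genuine obstacle here: the argument is a line-by-line adaptation of the linear case. The only point requiring attention is that in the second route $\opA(f_{k})$ must be defined, i.e.\ $f_{k} = P_{k}f_{0} \in \mathcal D(\opA)$; this is precisely why Assumption~\ref{ass-non-linear} asks that $f_{0}$ be an interior point of $\mathcal D(\opA)$, so that $f_{k}$ falls inside a ball around $f_{0}$ contained in $\mathcal D(\opA)$ for all sufficiently large $k$, while the first route avoids the issue entirely at the cost of an innocuous enlargement of the constant.
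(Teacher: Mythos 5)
Your first route is correct and, in fact, cleaner than the paper's proof. Both start from $\norm{h-f_{0}}{X}\le \norm{h-f_{k}}{X}+\norm{(I-P_{k})f_{0}}{X}$ with $f_{k}:=P_{k}f_{0}$, but the paper then bounds $\norm{h-f_{k}}{X}$ by $\norm{\opA(h)-\opA(f_{k})}{Y}/j(\opA,\X_{k})$ using the tentatively extended non-linear modulus of injectivity, splits in the data space $Y$, applies the upper inequality of~\eqref{eq:non-linear-K} to $\norm{\opA(f_{0})-\opA(f_{k})}{Y}$, and finally invokes $\ce\,j(H^{1/2},\X_{k})\le j(\opA,\X_{k})$; this requires $f_{k}\in\mathcal D(\opA)$, hence $k$ large enough (the paper says so explicitly), and produces the prefactor $\cff/\ce$. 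Your first route stays with the \emph{linear} operator $H^{1/2}$: you bound $\norm{h-f_{k}}{X}$ via $j(H^{1/2},\X_{k})$, split in $X$, and apply~\eqref{eq:non-linear-K} only to the single pair $(h,f_{0})$, which is precisely the pair at which the assumption is anchored. This gives the tighter constant $1+\varrho(H^{1/2},\X_{k})/j(H^{1/2},\X_{k})$ and holds for every $k\in\nat$ as the statement asserts (the paper's argument only delivers large $k$), so you improve on the paper, not merely match it. One caveat on your second route: the step $\norm{H^{1/2}(h-f_{k})}{X}\le \ce^{-1}\norm{\opA(h)-\opA(f_{k})}{Y}$ is not licensed by~\eqref{eq:non-linear-K}, whose lower bound compares $\opA(f)$ only to $\opA(f_{0})$, not to $\opA(f_{k})$; the paper skirts this by passing through $j(\opA,\X_{k})$ and the asserted relation $\ce\,j(H^{1/2},\X_{k})\le j(\opA,\X_{k})$, which tacitly strengthens~\eqref{eq:non-linear-K} to hold for arbitrary pairs in $\X_{k}\cap\mathcal D(\opA)$. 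Your first route avoids all of this; keep it as the main argument.
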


\begin{proof}  Given~$f_{0}$ we assign~$f_{k}:= P_{k}f_{0}$. Clearly, both~$h, f_{k}\in \X_{k}$, and for~$k$ large enough we will have~$\norm{f_0 - f_k}{}\leq R$, such that~$f_k\in\mathcal D(\opA)$. Then  we can bound
$$
\norm{h - f_{0}}{X} \leq \norm{h - f_{k}}{X} + \norm{f_{k} - f_{0}}{X}
= \norm{h - f_{k}}{X} + \norm{(I - P_{k})f_{0}}{X}.
$$
For the first summand we continue and bound, noticing that~$P_{k}(h -
f_{k}) = h - f_{k}$ as
\begin{align*}
 \norm{h - f_{k}}{X} & = \norm{P_{k}(h-  f_{k})}{} \leq
                       \frac{\norm{\opA(h) - \opA(f_{k})}{Y}}{j(\opA,\X_{k})}\\ 
& \leq  \frac{\norm{\opA(h) - \opA(f_{0})}{Y} + \norm{\opA(f_{0}) -
  \opA(f_{k})}{Y} }{j(\opA,\X_{k})}\\ 
& \leq  \frac{\norm{\opA(h) - \opA(f_{0})}{Y}}{j(\opA,\X_{k})} + \cff \frac{\norm{H^{1/2}(f_{0} -
  f_{k})}{}}{j(\opA,\X_{k})}\\
& =    \frac{\norm{\opA(h) - \opA(f_{0})}{Y}}{j(K,\X_{k})} + \cff \frac{\norm{H^{1/2}(I
  - P_{k})f_{0}}{X}}{j(\opA,\X_{k})}\\
& \leq  \frac{\norm{\opA(h) - \opA(f_{0})}{Y}}{j(\opA,\X_{k})}+ \cff
  \frac{\varrho(H^{1/2},\X_{k})\norm{(I -
  P_{k})f_{0}}{X}}{j(\opA,\X_{k})}.
\end{align*}
Finally, under Assumption~\ref{ass-non-linear} we also have that
$$
\ce j(H^{1/2},\X_{k})\leq j(\opA,\X_{k}) \leq \cff j(H^{1/2},\X_{k}),
$$
which allows to complete the proof.
\end{proof}

As an immediate consequence of the above proposition
we mention the following extension of Proposition \ref{prop:main-bound}.
  \begin{cor}
    Under Assumptions~\ref{ass:relations} and~\ref{ass-non-linear}, we
    have for~$f_0\in\af$ and for $k\in\mathbb N$ large enough, that
$$
\omega_{f_{0}}(\opA^{-1},\X_{k}\cap\mathcal D(\opA),\delta) \leq M\lr{1 + \frac{\cff C_{P}C_{B}}{\ce}}\varphi(s_{k+1}) + \frac{C_{B}}{\ce} \frac{\delta}{\sqrt{s_{k}}}.
$$
\end{cor}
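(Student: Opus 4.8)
The plan is to read the bound off directly from the non-linear analogue of Proposition~\ref{prop:main-error-bound} stated just above, i.e.\ the pointwise estimate~\eqref{eq:main-error-Xn-non-linear}, by inserting the three inequalities of Assumption~\ref{ass:relations}. First I would fix~$\delta>0$ and take~$k$ large enough that~$f_k:=P_kf_0$ lies in~$B_R(f_0)\subset\mathcal D(\opA)$, which is precisely the regime in which~\eqref{eq:main-error-Xn-non-linear} is available; then for every competitor~$h\in\X_k\cap\mathcal D(\opA)$ with~$\norm{\opA(h)-\opA(f_0)}{Y}\leq\delta$ that bound applies.

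Next I would control the two geometric quantities appearing in~\eqref{eq:main-error-Xn-non-linear} using Assumption~\ref{ass:relations}. The Jackson inequality gives~$\varrho(H^{1/2},\X_k)\leq C_P s_{k+1}^{1/2}$ and the Bernstein inequality gives~$j(H^{1/2},\X_k)\geq C_B^{-1}s_k^{1/2}$; since the singular numbers are non-increasing, $s_{k+1}^{1/2}\leq s_k^{1/2}$, whence~$\varrho(H^{1/2},\X_k)/j(H^{1/2},\X_k)\leq C_PC_B$. The approximation-power bound gives~$\norm{(I-P_k)f_0}{X}\leq M\varphi(s_{k+1})$. Substituting these into the first term of~\eqref{eq:main-error-Xn-non-linear} bounds it by~$M\lr{1+\cff C_PC_B/\ce}\varphi(s_{k+1})$, while for the second term~$1/j(H^{1/2},\X_k)\leq C_B s_k^{-1/2}$ together with~$\norm{\opA(h)-\opA(f_0)}{Y}\leq\delta$ bounds it by~$(C_B/\ce)\,\delta/\sqrt{s_k}$.

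Finally I would take the supremum over all admissible~$h$, which yields exactly the claimed estimate for~$\omega_{f_0}(\opA^{-1},\X_k\cap\mathcal D(\opA),\delta)$. There is essentially no obstacle: the argument is just the assembly of the already-established non-linear error bound with the Jackson/Bernstein/approximation estimates, in complete analogy with the proof of Proposition~\ref{prop:main-bound}. The only points needing a word of care are that~$k$ must be large enough for~$P_kf_0$ to remain inside~$\mathcal D(\opA)$ (whence the qualifier ``for $k$ large enough''), and that for a non-linear~$\opA$ the modulus of continuity is understood with~$\norm{\opA(f)-\opA(f_0)}{Y}$ in place of~$\norm{\opA(f-f_0)}{Y}$, consistently with Assumption~\ref{ass-non-linear}.
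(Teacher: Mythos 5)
Your proposal is correct and is exactly the argument the paper intends: the corollary is obtained by inserting the Jackson, Bernstein, and approximation-power bounds of Assumption~\ref{ass:relations} into the pointwise estimate~\eqref{eq:main-error-Xn-non-linear}, using $s_{k+1}\leq s_k$ to bound the ratio $\varrho/j$ by $C_PC_B$, and then taking the supremum over admissible $h$. Your remarks on the role of ``$k$ large enough'' and on the interpretation of the modulus for non-linear $\opA$ match the paper's setup.
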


\section{Technical result}
\label{sec:appendix}

The following result is well-known, and taken
from~\cite[Lemma~3]{MR1984890}. A similar result was first used
in~\cite[Lemma~3.4]{MR1257145}. 
\begin{lem}\label{lem:geometry}
  Given~$q,\mu>0$ we consider the function
  $$
  \varphi_{q,\mu}(t) := t^{q}\log^{-\mu}(1/t),\quad    0 < t <1.
  $$
  We assign the related function
  $$
\psi_{q,\mu}(s):= s^{1/q} \log^{\mu/q}(1/s^{1/q}),\quad 0< s < 1. 
$$
Then we have that
$$
\lim_{s\to 0} \frac{\varphi_{q,\mu}^{-1}(s)}{\psi_{q,\mu}(s)} = 1.
$$
\end{lem}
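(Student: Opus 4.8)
The plan is to exhibit $\psi_{q,\mu}$ as an asymptotic inverse of $\varphi_{q,\mu}$ near the origin by a direct change of variables, so that the whole lemma reduces to a one-line asymptotic estimate. First I would record that $\varphi_{q,\mu}$ is strictly increasing on $(0,1)$: writing $u(t)=\log(1/t)$, a short computation gives
\[
\varphi_{q,\mu}'(t) = t^{q-1}\log^{-\mu-1}(1/t)\,\bigl(q\log(1/t) + \mu\bigr) > 0, \qquad 0 < t < 1,
\]
since every factor is positive there. Hence $\varphi_{q,\mu}^{-1}$ is well-defined on a right-neighbourhood of $0$, and because $\varphi_{q,\mu}(0+)=0$ we have $t := \varphi_{q,\mu}^{-1}(s) \to 0$ as $s \to 0^+$.

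Next, fix a small $s>0$, put $t = \varphi_{q,\mu}^{-1}(s)$, and set $L := \log(1/t)$, so that $L \to \infty$ as $s \to 0$. The defining relation $s = t^{q}L^{-\mu}$ rearranges to
\[
t = s^{1/q}\,L^{\mu/q},
\]
and taking logarithms in $s = t^q L^{-\mu}$ yields
\[
\log(1/s) = qL + \mu\log L .
\]
Since $\log(1/s^{1/q}) = \tfrac1q\log(1/s)$, we have $\psi_{q,\mu}(s) = s^{1/q}\bigl(\tfrac1q\log(1/s)\bigr)^{\mu/q}$, and dividing the two displayed relations gives
\[
\frac{\varphi_{q,\mu}^{-1}(s)}{\psi_{q,\mu}(s)} = \lr{\frac{qL}{\log(1/s)}}^{\mu/q} = \lr{1 + \frac{\mu\log L}{qL}}^{-\mu/q}.
\]
As $L \to \infty$ we have $\frac{\log L}{L}\to 0$, so the right-hand side converges to $1$, which is exactly the assertion $\lim_{s\to 0}\varphi_{q,\mu}^{-1}(s)/\psi_{q,\mu}(s) = 1$.

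The argument is entirely routine — it is really just a careful substitution — so there is no serious obstacle. The only place where a little attention is needed is the last step: one must check that the correction term $\mu\log L$ is genuinely of lower order than the main term $qL$ in the identity $\log(1/s) = qL + \mu\log L$, since it is precisely this that forces the ratio to tend to the exact value $1$ (rather than to some finite constant $\neq 1$). Everything else is bookkeeping in the change of variables $s \leftrightarrow t$, together with the monotonicity already established.
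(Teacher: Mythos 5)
Your argument is correct and complete: the monotonicity computation legitimizes the inverse, the substitution $t=\varphi_{q,\mu}^{-1}(s)$, $L=\log(1/t)$ gives the exact identity $\varphi_{q,\mu}^{-1}(s)/\psi_{q,\mu}(s)=\bigl(1+\tfrac{\mu\log L}{qL}\bigr)^{-\mu/q}$, and $L\to\infty$ forces the limit $1$. Note that the paper does not prove this lemma at all — it is quoted as a known result from the literature (Lemma~3 of \cite{MR1984890}) — so your contribution is a valid, self-contained elementary verification rather than an alternative to an argument in the text.
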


\bibliography{modulus}

\begin{thebibliography}{37}

\bibitem{kweku19}
\begin{barticle}[author]
\bauthor{\bsnm{Abraham},~\bfnm{Kweku}\binits{K.}} \AND
  \bauthor{\bsnm{Nickl},~\bfnm{Richard}\binits{R.}}
(\byear{2020}).
\btitle{On statistical {C}alder{\'o}n problems}.
\bjournal{Mathematical Statistics and Learning}
\bvolume{2}
\bpages{165--216}.
\end{barticle}
\endbibitem

\bibitem{ALS13}
\begin{barticle}[author]
\bauthor{\bsnm{Agapiou},~\bfnm{Sergios}\binits{S.}},
  \bauthor{\bsnm{Larsson},~\bfnm{Stig}\binits{S.}} \AND
  \bauthor{\bsnm{Stuart},~\bfnm{Andrew~M.}\binits{A.~M.}}
(\byear{2013}).
\btitle{Posterior contraction rates for the {B}ayesian approach to linear
  ill-posed inverse problems}.
\bjournal{Stochastic Process. Appl.}
\bvolume{123}
\bpages{3828--3860}.
\bdoi{10.1016/j.spa.2013.05.001}
\bmrnumber{3084161}
\end{barticle}
\endbibitem

\bibitem{MR3815105}
\begin{bincollection}[author]
\bauthor{\bsnm{Agapiou},~\bfnm{Sergios}\binits{S.}} \AND
  \bauthor{\bsnm{Math\'e},~\bfnm{Peter}\binits{P.}}
(\byear{2018}).
\btitle{Posterior contraction in {B}ayesian inverse problems under {G}aussian
  priors}.
In \bbooktitle{New trends in parameter identification for mathematical models}.
\bseries{Trends Math.}
\bpages{1--29}.
\bpublisher{Birkh\"auser/Springer, Cham}.
\bdoi{10.1007/978-3-319-70824-9_1}
\bmrnumber{3815105}
\end{bincollection}
\endbibitem

\bibitem{ASZ14}
\begin{barticle}[author]
\bauthor{\bsnm{Agapiou},~\bfnm{Sergios}\binits{S.}},
  \bauthor{\bsnm{Stuart},~\bfnm{Andrew~M.}\binits{A.~M.}} \AND
  \bauthor{\bsnm{Zhang},~\bfnm{Yuan-Xiang}\binits{Y.-X.}}
(\byear{2014}).
\btitle{Bayesian posterior contraction rates for linear severely ill-posed
  inverse problems}.
\bjournal{J. Inverse Ill-Posed Probl.}
\bvolume{22}
\bpages{297--321}.
\bdoi{10.1515/jip-2012-0071}
\bmrnumber{3215928}
\end{barticle}
\endbibitem

\bibitem{MR3091697}
\begin{barticle}[author]
\bauthor{\bsnm{Arbel},~\bfnm{Julyan}\binits{J.}},
  \bauthor{\bsnm{Gayraud},~\bfnm{Ghislaine}\binits{G.}} \AND
  \bauthor{\bsnm{Rousseau},~\bfnm{Judith}\binits{J.}}
(\byear{2013}).
\btitle{Bayesian optimal adaptive estimation using a sieve prior}.
\bjournal{Scand. J. Stat.}
\bvolume{40}
\bpages{549--570}.
\bdoi{10.1002/sjos.12002}
\bmrnumber{3091697}
\end{barticle}
\endbibitem

\bibitem{MR1983541}
\begin{barticle}[author]
\bauthor{\bsnm{Belitser},~\bfnm{Eduard}\binits{E.}} \AND
  \bauthor{\bsnm{Ghosal},~\bfnm{Subhashis}\binits{S.}}
(\byear{2003}).
\btitle{Adaptive {B}ayesian inference on the mean of an infinite-dimensional
  normal distribution}.
\bjournal{Ann. Statist.}
\bvolume{31}
\bpages{536--559}.
\bnote{Dedicated to the memory of Herbert E. Robbins}.
\bdoi{10.1214/aos/1051027880}
\bmrnumber{1983541}
\end{barticle}
\endbibitem

\bibitem{MR1477662}
\begin{bbook}[author]
\bauthor{\bsnm{Bhatia},~\bfnm{Rajendra}\binits{R.}}
(\byear{1997}).
\btitle{Matrix analysis}.
\bseries{Graduate Texts in Mathematics}
\bvolume{169}.
\bpublisher{Springer-Verlag, New York}.
\bdoi{10.1007/978-1-4612-0653-8}
\bmrnumber{1477662}
\end{bbook}
\endbibitem

\bibitem{IC08}
\begin{barticle}[author]
\bauthor{\bsnm{Castillo},~\bfnm{Isma\"{e}l}\binits{I.}}
(\byear{2008}).
\btitle{Lower bounds for posterior rates with {G}aussian process priors}.
\bjournal{Electron. J. Stat.}
\bvolume{2}
\bpages{1281--1299}.
\bdoi{10.1214/08-EJS273}
\bmrnumber{2471287}
\end{barticle}
\endbibitem

\bibitem{MR3839555}
\begin{bincollection}[author]
\bauthor{\bsnm{Dashti},~\bfnm{Masoumeh}\binits{M.}} \AND
  \bauthor{\bsnm{Stuart},~\bfnm{Andrew~M.}\binits{A.~M.}}
(\byear{2017}).
\btitle{The {B}ayesian approach to inverse problems}.
In \bbooktitle{Handbook of uncertainty quantification. {V}ol. 1, 2, 3}
\bpages{311--428}.
\bpublisher{Springer, Cham}.
\bmrnumber{3839555}
\end{bincollection}
\endbibitem

\bibitem{MR3859257}
\begin{barticle}[author]
\bauthor{\bsnm{Ding},~\bfnm{Litao}\binits{L.}} \AND
  \bauthor{\bsnm{Math\'{e}},~\bfnm{Peter}\binits{P.}}
(\byear{2018}).
\btitle{Minimax rates for statistical inverse problems under general source
  conditions}.
\bjournal{Comput. Methods Appl. Math.}
\bvolume{18}
\bpages{603--608}.
\bdoi{10.1515/cmam-2017-0055}
\bmrnumber{3859257}
\end{barticle}
\endbibitem

\bibitem{MR1062717}
\begin{barticle}[author]
\bauthor{\bsnm{Donoho},~\bfnm{David~L.}\binits{D.~L.}},
  \bauthor{\bsnm{Liu},~\bfnm{Richard~C.}\binits{R.~C.}} \AND
  \bauthor{\bsnm{MacGibbon},~\bfnm{Brenda}\binits{B.}}
(\byear{1990}).
\btitle{Minimax risk over hyperrectangles, and implications}.
\bjournal{Ann. Statist.}
\bvolume{18}
\bpages{1416--1437}.
\bdoi{10.1214/aos/1176347758}
\bmrnumber{1062717}
\end{barticle}
\endbibitem

\bibitem{MR1408680}
\begin{bbook}[author]
\bauthor{\bsnm{Engl},~\bfnm{Heinz~W.}\binits{H.~W.}},
  \bauthor{\bsnm{Hanke},~\bfnm{Martin}\binits{M.}} \AND
  \bauthor{\bsnm{Neubauer},~\bfnm{Andreas}\binits{A.}}
(\byear{1996}).
\btitle{Regularization of inverse problems}.
\bseries{Mathematics and its Applications}
\bvolume{375}.
\bpublisher{Kluwer Academic Publishers Group, Dordrecht}.
\bmrnumber{1408680}
\end{bbook}
\endbibitem

\bibitem{MR2332274}
\begin{barticle}[author]
\bauthor{\bsnm{Ghosal},~\bfnm{Subhashis}\binits{S.}} \AND
  \bauthor{\bparticle{van~der} \bsnm{Vaart},~\bfnm{Aad}\binits{A.}}
(\byear{2007}).
\btitle{Convergence rates of posterior distributions for non-i.i.d.
  observations}.
\bjournal{Ann. Statist.}
\bvolume{35}
\bpages{192--223}.
\bdoi{10.1214/009053606000001172}
\bmrnumber{2332274}
\end{barticle}
\endbibitem

\bibitem{MR3588285}
\begin{bbook}[author]
\bauthor{\bsnm{Gin\'{e}},~\bfnm{Evarist}\binits{E.}} \AND
  \bauthor{\bsnm{Nickl},~\bfnm{Richard}\binits{R.}}
(\byear{2016}).
\btitle{Mathematical foundations of infinite-dimensional statistical models}.
\bseries{Cambridge Series in Statistical and Probabilistic Mathematics, [40]}.
\bpublisher{Cambridge University Press, New York}.
\bdoi{10.1017/CBO9781107337862}
\bmrnumber{3588285}
\end{bbook}
\endbibitem

\bibitem{MR4151406}
\begin{barticle}[author]
\bauthor{\bsnm{Giordano},~\bfnm{Matteo}\binits{M.}} \AND
  \bauthor{\bsnm{Nickl},~\bfnm{Richard}\binits{R.}}
(\byear{2020}).
\btitle{Consistency of {B}ayesian inference with {G}aussian process priors in
  an elliptic inverse problem}.
\bjournal{Inverse Problems}
\bvolume{36}
\bpages{085001, 35}.
\bdoi{10.1088/1361-6420/ab7d2a}
\bmrnumber{4151406}
\end{barticle}
\endbibitem

\bibitem{MR4116718}
\begin{barticle}[author]
\bauthor{\bsnm{Gugushvili},~\bfnm{Shota}\binits{S.}},
  \bauthor{\bparticle{van~der} \bsnm{Vaart},~\bfnm{Aad}\binits{A.}} \AND
  \bauthor{\bsnm{Yan},~\bfnm{Dong}\binits{D.}}
(\byear{2020}).
\btitle{Bayesian linear inverse problems in regularity scales}.
\bjournal{Ann. Inst. Henri Poincar\'{e} Probab. Stat.}
\bvolume{56}
\bpages{2081--2107}.
\bdoi{10.1214/19-AIHP1029}
\bmrnumber{4116718}
\end{barticle}
\endbibitem

\bibitem{MR3742371}
\begin{barticle}[author]
\bauthor{\bsnm{Hofmann},~\bfnm{Bernd}\binits{B.}} \AND
  \bauthor{\bsnm{Math\'e},~\bfnm{Peter}\binits{P.}}
(\byear{2018}).
\btitle{Tikhonov regularization with oversmoothing penalty for non-linear
  ill-posed problems in {H}ilbert scales}.
\bjournal{Inverse Problems}
\bvolume{34}
\bpages{015007, 14}.
\bdoi{10.1088/1361-6420/aa9b59}
\bmrnumber{3742371}
\end{barticle}
\endbibitem

\bibitem{MR2367863}
\begin{barticle}[author]
\bauthor{\bsnm{Hofmann},~\bfnm{B.}\binits{B.}},
  \bauthor{\bsnm{Math\'{e}},~\bfnm{P.}\binits{P.}} \AND
  \bauthor{\bsnm{Pereverzev},~\bfnm{S.~V.}\binits{S.~V.}}
(\byear{2007}).
\btitle{Regularization by projection: approximation theoretic aspects and
  distance functions}.
\bjournal{J. Inverse Ill-Posed Probl.}
\bvolume{15}
\bpages{527--545}.
\bdoi{10.1515/jiip.2007.029}
\bmrnumber{2367863}
\end{barticle}
\endbibitem

\bibitem{MR2978290}
\begin{bbook}[author]
\bauthor{\bsnm{Horn},~\bfnm{Roger~A.}\binits{R.~A.}} \AND
  \bauthor{\bsnm{Johnson},~\bfnm{Charles~R.}\binits{C.~R.}}
(\byear{2013}).
\btitle{Matrix analysis},
\bedition{Second} ed.
\bpublisher{Cambridge University Press, Cambridge}.
\bmrnumber{2978290}
\end{bbook}
\endbibitem

\bibitem{MR3535664}
\begin{barticle}[author]
\bauthor{\bsnm{Kekkonen},~\bfnm{Hanne}\binits{H.}},
  \bauthor{\bsnm{Lassas},~\bfnm{Matti}\binits{M.}} \AND
  \bauthor{\bsnm{Siltanen},~\bfnm{Samuli}\binits{S.}}
(\byear{2016}).
\btitle{Posterior consistency and convergence rates for {B}ayesian inversion
  with hypoelliptic operators}.
\bjournal{Inverse Problems}
\bvolume{32}
\bpages{085005, 31}.
\bdoi{10.1088/0266-5611/32/8/085005}
\bmrnumber{3535664}
\end{barticle}
\endbibitem

\bibitem{MR3757524}
\begin{barticle}[author]
\bauthor{\bsnm{Knapik},~\bfnm{Bartek}\binits{B.}} \AND
  \bauthor{\bsnm{Salomond},~\bfnm{Jean-Bernard}\binits{J.-B.}}
(\byear{2018}).
\btitle{A general approach to posterior contraction in nonparametric inverse
  problems}.
\bjournal{Bernoulli}
\bvolume{24}
\bpages{2091--2121}.
\bdoi{10.3150/16-BEJ921}
\bmrnumber{3757524}
\end{barticle}
\endbibitem

\bibitem{MR2906881}
\begin{barticle}[author]
\bauthor{\bsnm{Knapik},~\bfnm{B.~T.}\binits{B.~T.}},
  \bauthor{\bparticle{van~der} \bsnm{Vaart},~\bfnm{A.~W.}\binits{A.~W.}} \AND
  \bauthor{\bparticle{van} \bsnm{Zanten},~\bfnm{J.~H.}\binits{J.~H.}}
(\byear{2011}).
\btitle{Bayesian inverse problems with {G}aussian priors}.
\bjournal{Ann. Statist.}
\bvolume{39}
\bpages{2626--2657}.
\bmrnumber{2906881}
\end{barticle}
\endbibitem

\bibitem{MR3031282}
\begin{barticle}[author]
\bauthor{\bsnm{Knapik},~\bfnm{B.~T.}\binits{B.~T.}},
  \bauthor{\bparticle{van~der} \bsnm{Vaart},~\bfnm{A.~W.}\binits{A.~W.}} \AND
  \bauthor{\bparticle{van} \bsnm{Zanten},~\bfnm{J.~H.}\binits{J.~H.}}
(\byear{2013}).
\btitle{Bayesian recovery of the initial condition for the heat equation}.
\bjournal{Comm. Statist. Theory Methods}
\bvolume{42}
\bpages{1294--1313}.
\bmrnumber{3031282}
\end{barticle}
\endbibitem

\bibitem{MR1257145}
\begin{barticle}[author]
\bauthor{\bsnm{Mair},~\bfnm{B.~A.}\binits{B.~A.}}
(\byear{1994}).
\btitle{Tikhonov regularization for finitely and infinitely smoothing
  operators}.
\bjournal{SIAM J. Math. Anal.}
\bvolume{25}
\bpages{135--147}.
\bdoi{10.1137/S0036141092238060}
\bmrnumber{1257145}
\end{barticle}
\endbibitem

\bibitem{MR3985479}
\begin{barticle}[author]
\bauthor{\bsnm{Math\'{e}},~\bfnm{Peter}\binits{P.}}
(\byear{2019}).
\btitle{Bayesian inverse problems with non-commuting operators}.
\bjournal{Math. Comp.}
\bvolume{88}
\bpages{2897--2912}.
\bdoi{10.1090/mcom/3439}
\bmrnumber{3985479}
\end{barticle}
\endbibitem

\bibitem{MR2384768}
\begin{barticle}[author]
\bauthor{\bsnm{Math\'{e}},~\bfnm{Peter}\binits{P.}} \AND
  \bauthor{\bsnm{Hofmann},~\bfnm{Bernd}\binits{B.}}
(\byear{2008}).
\btitle{How general are general source conditions?}
\bjournal{Inverse Problems}
\bvolume{24}
\bpages{015009, 5}.
\bdoi{10.1088/0266-5611/24/1/015009}
\bmrnumber{2384768}
\end{barticle}
\endbibitem

\bibitem{MR2036530}
\begin{barticle}[author]
\bauthor{\bsnm{Math\'{e}},~\bfnm{Peter}\binits{P.}} \AND
  \bauthor{\bsnm{Pereverzev},~\bfnm{Sergei~V.}\binits{S.~V.}}
(\byear{2003}).
\btitle{Discretization strategy for linear ill-posed problems in variable
  {H}ilbert scales}.
\bjournal{Inverse Problems}
\bvolume{19}
\bpages{1263--1277}.
\bdoi{10.1088/0266-5611/19/6/003}
\bmrnumber{2036530}
\end{barticle}
\endbibitem

\bibitem{MR1984890}
\begin{barticle}[author]
\bauthor{\bsnm{Math\'e},~\bfnm{Peter}\binits{P.}} \AND
  \bauthor{\bsnm{Pereverzev},~\bfnm{Sergei~V.}\binits{S.~V.}}
(\byear{2003}).
\btitle{Geometry of linear ill-posed problems in variable {H}ilbert scales}.
\bjournal{Inverse Problems}
\bvolume{19}
\bpages{789--803}.
\bdoi{10.1088/0266-5611/19/3/319}
\bmrnumber{1984890}
\end{barticle}
\endbibitem

\bibitem{MR2394505}
\begin{barticle}[author]
\bauthor{\bsnm{Math\'e},~\bfnm{Peter}\binits{P.}} \AND
  \bauthor{\bsnm{Sch\"one},~\bfnm{Nadine}\binits{N.}}
(\byear{2008}).
\btitle{Regularization by projection in variable {H}ilbert scales}.
\bjournal{Appl. Anal.}
\bvolume{87}
\bpages{201--219}.
\bdoi{10.1080/00036810701858185}
\bmrnumber{2394505}
\end{barticle}
\endbibitem

\bibitem{MR2277542}
\begin{barticle}[author]
\bauthor{\bsnm{Math\'{e}},~\bfnm{Peter}\binits{P.}} \AND
  \bauthor{\bsnm{Tautenhahn},~\bfnm{Ulrich}\binits{U.}}
(\byear{2006}).
\btitle{Interpolation in variable {H}ilbert scales with application to inverse
  problems}.
\bjournal{Inverse Problems}
\bvolume{22}
\bpages{2271--2297}.
\bdoi{10.1088/0266-5611/22/6/022}
\bmrnumber{2277542}
\end{barticle}
\endbibitem

\bibitem{MR1847845}
\begin{bbook}[author]
\bauthor{\bsnm{Natterer},~\bfnm{F.}\binits{F.}}
(\byear{2001}).
\btitle{The mathematics of computerized tomography}.
\bseries{Classics in Applied Mathematics}
\bvolume{32}.
\bpublisher{Society for Industrial and Applied Mathematics (SIAM),
  Philadelphia, PA}
\bnote{Reprint of the 1986 original}.
\bdoi{10.1137/1.9780898719284}
\bmrnumber{1847845}
\end{bbook}
\endbibitem

\bibitem{MR4118619}
\begin{barticle}[author]
\bauthor{\bsnm{Nickl},~\bfnm{Richard}\binits{R.}}
(\byear{2020}).
\btitle{Bernstein--von {M}ises theorems for statistical inverse problems {I}:
  {S}chr\"{o}dinger equation}.
\bjournal{J. Eur. Math. Soc. (JEMS)}
\bvolume{22}
\bpages{2697--2750}.
\bdoi{10.4171/JEMS/975}
\bmrnumber{4118619}
\end{barticle}
\endbibitem

\bibitem{MR774404}
\begin{bbook}[author]
\bauthor{\bsnm{Pinkus},~\bfnm{Allan}\binits{A.}}
(\byear{1985}).
\btitle{{$n$}-widths in approximation theory}.
\bseries{Ergebnisse der Mathematik und ihrer Grenzgebiete (3) [Results in
  Mathematics and Related Areas (3)]}
\bvolume{7}.
\bpublisher{Springer-Verlag, Berlin}.
\bdoi{10.1007/978-3-642-69894-1}
\bmrnumber{774404}
\end{bbook}
\endbibitem

\bibitem{KR13}
\begin{barticle}[author]
\bauthor{\bsnm{Ray},~\bfnm{Kolyan}\binits{K.}}
(\byear{2013}).
\btitle{Bayesian inverse problems with non-conjugate priors}.
\bjournal{Electron. J. Stat.}
\bvolume{7}
\bpages{2516--2549}.
\bdoi{10.1214/13-EJS851}
\bmrnumber{3117105}
\end{barticle}
\endbibitem

\bibitem{MR3044507}
\begin{barticle}[author]
\bauthor{\bsnm{Szab\'{o}},~\bfnm{B.~T.}\binits{B.~T.}},
  \bauthor{\bparticle{van~der} \bsnm{Vaart},~\bfnm{A.~W.}\binits{A.~W.}} \AND
  \bauthor{\bparticle{van} \bsnm{Zanten},~\bfnm{J.~H.}\binits{J.~H.}}
(\byear{2013}).
\btitle{Empirical {B}ayes scaling of {G}aussian priors in the white noise
  model}.
\bjournal{Electron. J. Stat.}
\bvolume{7}
\bpages{991--1018}.
\bdoi{10.1214/13-EJS798}
\bmrnumber{3044507}
\end{barticle}
\endbibitem

\bibitem{MR2418663}
\begin{barticle}[author]
\bauthor{\bparticle{van~der} \bsnm{Vaart},~\bfnm{A.~W.}\binits{A.~W.}} \AND
  \bauthor{\bparticle{van} \bsnm{Zanten},~\bfnm{J.~H.}\binits{J.~H.}}
(\byear{2008}).
\btitle{Rates of contraction of posterior distributions based on {G}aussian
  process priors}.
\bjournal{Ann. Statist.}
\bvolume{36}
\bpages{1435--1463}.
\bdoi{10.1214/009053607000000613}
\bmrnumber{2418663}
\end{barticle}
\endbibitem

\bibitem{MR1790008}
\begin{barticle}[author]
\bauthor{\bsnm{Zhao},~\bfnm{Linda~H.}\binits{L.~H.}}
(\byear{2000}).
\btitle{Bayesian aspects of some nonparametric problems}.
\bjournal{Ann. Statist.}
\bvolume{28}
\bpages{532--552}.
\bdoi{10.1214/aos/1016218229}
\bmrnumber{1790008}
\end{barticle}
\endbibitem

\end{thebibliography}
\bibliographystyle{imsart-number}

\end{document}